\documentclass[12pt]{amsart}
\usepackage{dsfont,hyperref}
\usepackage[all]{xy}
\usepackage{amssymb}
\SelectTips{cm}{}
\usepackage{fullpage}
\allowdisplaybreaks

\newtheorem{theorem}{Theorem}[section]
\newtheorem{lemma}[theorem]{Lemma}
\newtheorem{proposition}[theorem]{Proposition}
\newtheorem{corollary}[theorem]{Corollary}
\theoremstyle{definition}
\newtheorem{definition}[theorem]{Definition}
\newtheorem{example}[theorem]{Example}
\newtheorem{question}[theorem]{Question}

\newtheorem{remark}[theorem]{Remark}

\newcommand{\Ext}{\text{Ext}}


\newcommand{\End}{\text{End}}
\newcommand{\Hom}{\text{Hom}}

\newcommand{\Rep}{\text{Rep}}

\newcommand{\C}{\mathcal{C}}

\newcommand{\mO}{{\mathcal O}}

\newcommand{\QQ}{\mathbb{Q}}
\newcommand{\ZZ}{{\mathbb{Z}}}

\newcommand{\one}{\mathds{1}}
\newcommand{\lk}{\ar@{-}}
\newcommand{\lkdl}{\lk[dl]}
\newcommand{\lkdr}{\lk[dr]}
\newcommand{\lkdlr}{\lkdl\lkdr}
\newcommand{\D}{\mathcal{D}}
\newcommand{\M}{\mathcal{M}}
\renewcommand{\Vec}{\mathsf{Vec}}
\newcommand{\sVec}{\mathsf{sVec}}
\newcommand{\Ver}{\mathsf{Ver}}
\renewcommand{\k}{\mathbf{k}}
\newcommand{\FPdim}{\mathop{\mathrm{FPdim}}\nolimits}
\newcommand{\ccup}{\text{\small $\bigcup$}}
\newcommand{\coplus}{\text{\small $\bigoplus$}}

\pagestyle{plain}

\begin{document}

\title{Symmetric tensor categories in characteristic 2}

\author{Dave Benson and Pavel Etingof} 

\begin{abstract} 
We construct and study a nested sequence of finite symmetric tensor 
categories $\Vec=\C_0\subset \C_1\subset\cdots\subset \C_n\subset\cdots$
over a field of characteristic $2$ such that $\C_{2n}$ are
incompressible, i.e., do not admit tensor functors into 
tensor categories of smaller Frobenius--Perron dimension. 
This generalizes the category $\C_1$ described by Venkatesh \cite{Ven} and the category $\C_2$ defined by Ostrik. 
The Grothendieck rings of the categories $\C_{2n}$ and $\C_{2n+1}$ are
both isomorphic to the ring of real cyclotomic integers defined by a primitive $2^{n+2}$-th root of unity, 
$\mO_n=\ZZ[2\cos(\pi/2^{n+1})]$. 
\end{abstract} 

\maketitle

\section{Introduction}

Classical Pontrjagin duality reconstructs a (locally) compact 
abelian topological group from its character group, see for
example Morris~\cite{Morris:1977a}.  Tannaka--Krein
duality~\cite{Krein:1949a,Tannaka:1938a} generalizes this to an
arbitrary compact topological group $G$, at the expense of
increased complication. The theorem, in modern
terms, states that $G$ may be recovered from the tensor category
$\Rep(G)$ of representations together with the forgetful functor 
$F\colon\Rep(G)\to\Vec$ to
finite dimensional vector spaces. The group $G$ is reconstructed
as the group of natural transformations from the tensor functor
$F$ to itself, with a suitable topology. 

The theorem has been
further generalized and clarified in many 
directions, and has led to the notion of a Tannakian 
category~\cite{Cartier:1956a,Deligne/Milne:1982a,Joyal/Street:1991a,SaavedraRivano:1972a}.
This is by definition a rigid abelian tensor category over an
algebraically closed field 
with an exact, faithful linear tensor functor
$F$ to finite dimensional vector spaces. 
The group of natural transformations from $F$ to itself 
in this context is an affine group scheme $G$, and we obtain an equivalence
of our given category with $\Rep(G)$. In the case where $\C$ is 
\emph{finite}, i.e., equivalent to the category of representations of
a finite dimensional algebra, $G$ is a finite group scheme.

This naturally leads to the following question.
Given a symmetric tensor category $\C$, when does there exist such a functor $F$
to finite dimensional vector spaces?
It transpires that in order for $F$ to exist in some decent generality, 
there is another target category we need to consider, namely
the category $\sVec$ of finite dimensional super vector spaces. 
A super vector space is a $\ZZ/2$-graded vector space $V=V_0\oplus V_1$,
where $V_0$ and $V_1$ are the two graded pieces. For homogeneous elements $v\in V$,
we write $|v|$ for the parity of $v$, regarded as an element of $\ZZ/2$.  
The tensor product of two
super vector spaces is defined by 
\[ (V\otimes W)_0=V_0\otimes W_0\oplus V_1\otimes W_1;\qquad
(V\otimes W)_1=V_0\otimes W_1\oplus V_1\otimes W_0. \]
To make these into a symmetric tensor category, we need a tensor identity,
associativity isomorphisms, and
commutativity isomorphisms, in such a way that certain diagrams
commute
(see \cite{EGNO}, Definitions 4.1.1 and 8.1.12).
The critical one here is the commutativity isomorphism, which involves a sign.
The map $V \otimes W \cong W \otimes V$ is given on homogeneous elements 
by $v \otimes w \mapsto (-1)^{|v||w|}w\otimes v$.

If $\C$ is a symmetric tensor abelian category then a \emph{fiber functor} to an underlying 
category (such as vector spaces or super vector spaces) is an exact faithful
functor which takes tensor products to tensor products (up to natural isomorphism)
and the tensor identity to the tensor identity. 

In characteristic zero, 
Deligne~\cite{De2} proved that under certain mild assumptions
on size, every symmetric tensor category admits a fiber
functor to finite dimensional super vector spaces. The size assumption is that
given any object $X$, there exists a partition $\lambda$ such that the Schur functor
corresponding to $\lambda$ vanishes on $X$. This is satisfied whenever the
category has \emph{moderate growth}, meaning that objects have finite
composition lengths, and the composition lengths of tensor powers grow
at worst exponentially.

\begin{theorem}[Deligne]
Suppose that $\C$ is a symmetric tensor category of moderate growth over
an algebraically closed field of characteristic zero. Then there is a fiber functor $\C\to\sVec$,
unique up to natural isomorphism. 

Furthermore, there exists an affine super group scheme $G$ (i.e., a group scheme over the category
$\sVec$) with an element $z\in G$ satisfying $z^2=1$, and such that conjugation
by $z$ is the parity involution on $G$, which multiplies even elements of the
coordinate ring by $+1$ and odd elements by $-1$, and such that
the category $\C$ is 
equivalent to $\Rep(G,z)$, the category of representations of $G$ with 
$z$ acting as the parity involution.
\end{theorem}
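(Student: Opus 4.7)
The plan is to separate the construction of a fiber functor $F\colon \C\to\sVec$ from the reconstruction of the super group scheme. Once $F$ is produced, the affine super group scheme $G$ is recovered, in standard Tannakian fashion, as the functor sending a supercommutative algebra $R$ to the group of tensor-natural automorphisms of $F\otimes R$, with $z$ corresponding to the parity involution; the equivalence $\C\simeq \Rep(G,z)$ then follows from super-Tannakian reconstruction.

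The criterion I would aim for is that $\C$ admits a fiber functor to $\sVec$ if and only if it is \emph{Schur-finite}, meaning that for every object $X$ there exists a partition $\lambda$ with $S_\lambda(X)=0$. The ``only if'' direction is elementary, because a super vector space of super-dimension $(p|q)$ is killed by every Schur functor $S_\lambda$ whose partition contains a sufficiently large rectangle, and this property is inherited by any full tensor subcategory embedding into $\sVec$.

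To see that moderate growth implies Schur-finiteness, I would use Schur--Weyl duality to decompose $X^{\ot n}\cong\bigoplus_{|\lambda|=n} S_\lambda(X)\otimes V_\lambda$, where $V_\lambda$ is the irreducible representation of the symmetric group $S_n$ indexed by $\lambda$. If $S_\lambda(X)\ne 0$ for every partition $\lambda$ of some fixed $n$, the composition length of $X^{\ot n}$ is at least $\sum_{|\lambda|=n}\dim V_\lambda$. Since the number of partitions of $n$ grows sub-exponentially while $\sum_{|\lambda|=n}(\dim V_\lambda)^2=n!$, this lower bound grows faster than any exponential, contradicting moderate growth.

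The main obstacle is the hard half of the criterion: producing $F$ assuming only Schur-finiteness. My approach would be to build $F$ on each rigid tensor subcategory $\langle X\rangle\subset\C$ generated by a single object, and to assemble these functors via a filtered colimit over finitely generated subcategories. For a Schur-finite $X$ one can first identify $\langle X\rangle$ with a quotient of Deligne's universal Karoubian symmetric tensor category on a Schur-finite object; the key technical step is then to classify the possible abelian quotients of this universal category over an algebraically closed field of characteristic zero, and to show that each is of the form $\Rep(H,z)$ for a reductive affine super group scheme $H$. This classification relies on characteristic zero in an essential way, via semisimplicity of tensor-endomorphism algebras and a Jacobson--Morozov-type argument on the super Lie algebra side. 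Uniqueness of the fiber functor up to natural isomorphism on each $\langle X\rangle$, itself proved by super-Tannakian duality, guarantees compatibility under inclusions and lets one glue the local fiber functors into a global $F\colon \C\to\sVec$.
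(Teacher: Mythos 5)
This statement is quoted in the paper as background (Deligne's theorem, cited as \cite{De2}); the paper gives no proof of it, so there is nothing internal to compare with, and your proposal has to be judged on its own. The outer layers of your plan are fine: the Tannakian reconstruction of $(G,z)$ from a fiber functor, the necessity of Schur-finiteness (a super vector space of dimension $(p|q)$ is killed by any $S_\lambda$ whose diagram contains the $(p+1)\times(q+1)$ rectangle), and the deduction of Schur-finiteness from moderate growth via the char-$0$ Schur--Weyl decomposition $X^{\otimes n}\cong\bigoplus_{|\lambda|=n}S_\lambda(X)\otimes V_\lambda$ and the superexponential growth of $\sum_{|\lambda|=n}\dim V_\lambda$ (the number of involutions in $S_n$) are all correct and are indeed how this reduction is usually done.

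The genuine gap is the ``hard half,'' which is exactly where the content of Deligne's theorem lies and where your proposal substitutes a name for an argument. The step ``classify the possible abelian quotients of Deligne's universal Karoubian category on a Schur-finite object and show each is $\Rep(H,z)$'' is not a known input one can cite; for the subcategory $\langle X\rangle$ it \emph{is} the theorem being proved, so as written the argument is circular. Moreover the reduction itself is inaccurate: $\langle X\rangle$ is the abelian subcategory generated by $X$, obtained by taking subquotients of sums of tensor powers, and it is not an abelian quotient of a universal Karoubian category --- relating the two requires an abelian-envelope-type statement whose existence is part of what must be established. The auxiliary claim that the resulting $H$ is reductive is also false (take $\C=\Rep(G,z)$ for a non-reductive affine super group scheme $G$ with a tensor-generating representation $X$; then $\langle X\rangle=\C$), and ``semisimplicity of tensor-endomorphism algebras'' plus a Jacobson--Morozov-type argument does not produce a fiber functor: Deligne's actual construction proceeds by showing Schur-finiteness forces super-dimensions $(p|q)\in\ZZ_{\ge0}^2$, building a nonzero supercommutative ind-algebra $A$ over which $X\otimes A$ splits into free even and odd parts, and extracting the fiber functor from $A$-modules; nothing of that kind (nor any substitute) appears in your sketch. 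Finally, even granting fiber functors on each $\langle X\rangle$, gluing them needs more than uniqueness up to isomorphism --- the isomorphisms between restrictions are non-canonical, so the colimit over finitely generated subcategories requires a compatibility (2-limit/torsor-triviality) argument that should at least be acknowledged.
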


We are now naturally led to the question of what happens in prime characteristic.
If the category $\C$ is finite and semisimple, then this is answered by a theorem of 
Ostrik~[O2]. Namely, he showed that under these hypotheses, $\C$ admits
a fiber functor to the category $\Ver_p(\k)$. This is the symmetric tensor abelian 
category obtained from $\Rep_\k(\ZZ/p)$ by the process of 
\emph{semisimplification}. This process quotients 
out the set of morphisms $f\colon V\to W$ with
the property that for all morphisms $g\colon W\to V$ 
the composite $f\circ g$ has trace zero. The result is a semisimple
symmetric tensor category with $p-1$ isomorphism classes 
of simple objects, corresponding to Jordan blocks of size between
one and $p-1$. For instance, if $p=2$ then $\Ver_p(\k)=\Vec_\k$, the category
of finite dimensional vector spaces. In this case, Ostrik's theorem implies
that $\C$ is equivalent
to representations of a finite semisimple group scheme $G$.
If $p=3$ then $\Ver_p(\k)=\sVec_\k$, the category of finite dimensional
super vector spaces. But for $p\geq 5$, the structure is more complicated.

One may wonder whether such a fiber functor 
exists without the semisimplicity assumption.
This turns out not to be the case. Namely, let $\C_1$ 
be the tensor category of representations of the Hopf 
algebra $\k[d]/d^2$ in characteristic two, 
with the commutativity isomorphism given by 
$$
s(v\otimes w)=w\otimes v+dw\otimes dv.
$$
Then $\C_1$ does not admit a fiber functor to $\Vec_\k$ (\cite{Ven}, 1.5). 

So one may ask whether, still in characteristic two, 
any category $\C$ as above has a fiber functor to $\C_1$. 
This is also false, as was recently shown by V.~Ostrik. 
Namely, Ostrik~\cite{O3} constructed a symmetric tensor category 
$\C_2$ equivalent to $\C_1\oplus \Vec_\k$ as an abelian category, 
which does not admit a fiber functor to $\C_1$.  
So one may ask whether any $\C$ as above has a fiber functor to $\C_2$ 
until the next counterexample is found, and so on. 

The goal of this paper is to construct and study an infinite 
ascending chain of finite symmetric tensor categories 
$$
\Vec_\k=\C_0\subset \C_1\subset \C_2\subset \C_3\subset\cdots
$$
in characteristic two such that $\C_{2n}$ is incompressible, i.e., does not admit
a tensor functor to a finite tensor category of smaller
Frobenius--Perron dimension (even a non-symmetric one). This shows that there is no {\it finite} symmetric  tensor category $\D$ over $\k$ such that any finite symmetric tensor category $\C$ over $\k$ admits a fiber functor to $\D$. 

This motivates the following question, which currently remains open. 

\begin{question} Consider the infinite category $\C_\infty=\ccup_{n\ge 0}\,\C_n$. Does any finite symmetric tensor category over $\k$ admit a 
fiber functor to $\C_\infty$?
\end{question} 

The organization of the paper is as follows. In Section 2 we state the
main theorem, giving a construction of the categories $\C_n$. In
Section 3 we prove the main theorem, and show that 
 $\C_{2n}$ categorifies the ring ${\mathcal O}_n=\ZZ[2\cos(\pi/2^{n+1})]$ (i.e., this ring is the Grothendieck ring of $\C_{2n}$). In Section 4 we study further properties of the categories $\C_n$, in particular show that they are incompressible, compute their Cartan matrices and prove their universal property. We also give an application of the categories $\C_n$ to modular representation theory of finite groups. In Section 5 we compute the structure of $\C_n$ for small $n$.\bigskip

\noindent
\textbf{Acknowledgements.} This material is based on work supported by
the National Science Foundation under Grant No.~DMS-1440140 while the
authors were in residence at the Mathematical Sciences Research
Institute in Berkeley, California, during the Spring 2018 semester.
The work of the second author was also partially supported by the NSF grant DMS-1502244.
The second author is very grateful to V. Ostrik for sharing his
construction of the category $\C_2$ and for numerous useful
discussions, without which this paper would not have appeared. 
The authors also thank A. Davydov, R. Guralnick, D. Nikshych, V. Serganova, P. Tiep and G. Williamson 
for useful discussions, and the referee for suggestions that helped improve the paper. 

\section{The main theorem} 

Throughout the paper, we will use the basics on tensor categories from \cite{EGNO}. 

Let $\k$ be an algebraically closed field of characteristic $p>0$. 
Consider the symmetric rigid monoidal category ${\rm Tilt}_p$ of
tilting modules over $SL(2,\k)$ (see \cite{J}, Appendix E). Its indecomposable objects are $T_m$,
$m\ge 0$ (the tilting module with highest weight $m$). It is known
that this category contains a descending chain of tensor ideals
$\mathcal{I}_1\supset \mathcal{I}_2\supset\cdots$, 
where $\mathcal{I}_n$ is spanned by the objects $T_m$, $m\ge p^n-1$. Let $\mathcal{T}_{n,p}={\rm Tilt}_p/\mathcal{I}_n$. If $n=1$, the category $\mathcal{T}_{n,p}$ is semisimple abelian and equivalent to the Verlinde category ${\rm Ver}_p={\rm Ver}_p(\k)$, but for $n\ge 2$ it is in general only Karoubian and not abelian. 

Now assume that $p=2$. Recall that in any symmetric tensor category $\C$ over $\k$ and $X\in \C$, 
the Frobenius twist $X^{(1)}$ is the cohomology (i.e., the kernel modulo the image) of the operator 
$$
d=1+s\colon X\otimes X\to X\otimes X,
$$ 
where $s$ is the commutativity isomorphism (note that $d^2=0$, so the cohomology is well defined). It is easy to show that the functor $F$ sending $X$ to $X^{(1)}$ is additive and "exact in the middle", i.e., for any short exact sequence 
$$
0\to X\to Y\to Z\to 0
$$
the sequence $F(X)\to F(Y)\to F(Z)$ is exact. In fact, an even stronger statement holds: 
we have a 3-periodic long exact sequence 
$$
\dots F(Z)\to F(X)\to F(Y)\to F(Z)\to F(X)\to\dots
$$
Indeed, the object $Y\otimes Y$ has a 3-step filtration with successive quotients $X\otimes X,X\otimes Z\oplus Z\otimes X$ and $Z\otimes Z$, so we have the corresponding 3-step filtration on the complex 
$$
\dots \to Y\otimes Y\to Y\otimes Y\to\dots, 
$$
with successive quotients having cohomology $F(X)$, $0$, and $F(Z)$ in every degree, respectively. 
Therefore, taking the 
2-step filtration of $Y\otimes Y$ with quotients $X\otimes X$ and $(Y\otimes Y)/(X\otimes X)$,
we obtain a short exact sequence of the corresponding complexes giving the claimed long exact sequence of cohomology. 

This implies that the composition series of $F(Y)$ is dominated by the composition series of $F({\rm gr}Y)=F(X\oplus Z)=F(X)\oplus F(Z)$. 

Let $K$ be the algebraic closure of the fraction field of the ring of Witt vectors $W(\k)$. 
Let ${\rm Ver}_{2^{n+1}}(K)$ denote the category of tilting modules over the quantum group $SL_q(2,K)$ at a $2^{n+2}$th primitive root of unity $q$ modulo negligible objects (see \cite{EGNO}, Subsection 8.18.2), and let ${\rm Ver}_{2^{n+1}}^+(K)$ be its even part, spanned by modules with even highest weight. 

Our main result is the following theorem. 

\begin{theorem}\label{maint} Let ${\rm char}(\k)=2$. 
There exists an ascending chain of finite symmetric tensor categories over $\k$, 
${\rm Vec}_\k=\C_0\subset \C_1\subset \C_2\subset\cdots$ 
(with fully faithful symmetric tensor embeddings) having the following properties. 

(i) For each $n\ge 0$, $\C_{2n}$ contains ${\mathcal T}_{n+1,2}$ as a generating full rigid monoidal Karoubian subcategory.  

(ii) There are no symmetric tensor functors $\C_{i+1}\to \C_i$ and no tensor functors $\C_{2i+2}\to \C_{2i+1}$, $i\ge 0$. 

(iii) There exists a (non-symmetric) tensor functor $F_i\colon \C_{2i+1}\to \C_{2i}$, $i\ge 0$. 
Namely, for $i\ge 1$, the category $\C_{2i}$ is equipped with a distinguished self-dual simple object $X_i$, and $\C_{2i+1}$ 
is the category of finite dimensional modules over the commutative
cocommutative Hopf algebra $A_i=\wedge X_i$ with a nontrivial
triangular structure defining the symmetric braiding of
$\C_{2i+1}$. For $i=0$, the construction is the same except that
$X_0=0$ and $A_0=\wedge \one$ with a nontrivial triangular structure. In particular, $\C_{2i+1}$ has the same 
simple objects as $\C_{2i}$.  

(iv) One has 
$$
\FPdim(\C_{2n})=\FPdim {\rm Ver}_{2^{n+1}}(K)=\frac{2^{n}}{\sin^2(\pi/2^{n+1})},
$$
$$
\FPdim(\C_{2n-1})=\FPdim {\rm Ver}_{2^{n+1}}^+(K)=\frac{2^{n-1}}{\sin^2(\pi/2^{n+1})}.
$$

(v) All simple objects of $\C_n$ are self-dual. 

(vi) One has $X_n^2=2\cdot \one+X_{n-1}$ in ${\rm Gr}(\C_{2n})$. Moreover, 
$$
{\rm FPdim}(X_n)=2\cos(\pi/2^{n+1})=\sqrt{2+\sqrt{2+\cdots+\sqrt{2+\sqrt{2}}}}\qquad (\text{$n$ roots}),
$$
and
$$
\dim(X_n)=0.
$$

(vii) One has $A_n=\one\oplus X_n\oplus \one$ as an object of $\C_{2n}$, 
with multiplication $X_n\otimes X_n\to \one$ being the evaluation morphism of $X_n$ composed with the 
(arbitrarily normalized) identification $X_n\cong X_n^*$ in the first component. In particular, 
$$
{\rm FPdim}(A_n)=2+2\cos(\pi/2^{n+1})=4\cos^2(\pi/2^{n+2}).
$$

(viii) As an abelian category, $\C_{2n+1}$ is indecomposable; moreover, all simple objects of this category occur in 
the projective cover of $\one$.  

(ix) We have $\C_{2n+2}=\C_{2n+1}\oplus \M_n$, and $\M_n$ is equivalent to $\C_{2n}$
as a $\C_{2n+1}$-bimodule category, where the bimodule structure is defined by the functor $F_n$ and the symmetric braiding of $\C_{2n+1}$. Under this equivalence, $X_{n+1}$ corresponds to $\one\in \C_{2n}$. In particular, $\C_{2n}$ has $2^n$ simple objects. Namely, the simple objects of $\C_{2n}$ 
are $X_S:={\text{\small$\bigotimes$}}_{i\in S}X_i$, where $S\subset\lbrace{1,\dots,n\rbrace}$. 
Thus, 
$$
\C_{2n}=\C_{2n-1}\oplus \C_{2n-3}\oplus\cdots\oplus \C_1\oplus \C_0
$$
as abelian categories, i.e., it has $n$ blocks. Also, we have $X_{n+1}\otimes X_{n+1}\cong A_n$, the regular $A_n$-module in 
$\C_{2n}$, as objects of $\C_{2n+1}$.  

(x) The projective cover of a simple object $Y\in \C_{2n+1}$ in $\C_{2n+1}$ (and $\C_{2n+2}$) 
has the form $A_n\otimes P_Y$, where $P_Y$ is the projective cover 
of $Y$ in $\C_{2n}$. 

(xi) One has $X_{n+1}^{(1)}=X_n$. 
\end{theorem}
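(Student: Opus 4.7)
Since $X_{n+1}^{(1)}$ is the cohomology of $d = 1+s$ on $X_{n+1}\otimes X_{n+1}$, I plan to transport the computation to $A_n$ via the isomorphism $X_{n+1}\otimes X_{n+1}\cong A_n$ from part (ix) and then exploit the very rigid algebra structure of $A_n$. Under this iso (an iso of $A_n$-modules in $\C_{2n+1}$), the braiding $s$ becomes an $A_n$-linear involution $\sigma$ of the regular module $A_n$, so that $d$ becomes $1+\sigma$.

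Since $\End_{A_n}(A_n)\cong A_n$ (as $A_n$ is commutative), $\sigma$ is right multiplication by some $\mu\in A_n$ with $\mu^2=1$. Using $A_n=\one\oplus X_n\oplus\one$ from (vii) together with $\Hom(\one,X_n)=0$ (since $X_n$ is simple and nonisomorphic to $\one$; this is vacuous for $n=0$ where $X_0=0$), one has $\Hom(\one,A_n)=\k\cdot u\oplus\k\cdot\tau$, where $u$ is the algebra unit and $\tau\colon\one\to A_n$ picks out the top copy of $\one$, which coincides with the socle of $A_n$ as an $A_n$-module. Writing $\mu=au+b\tau$ and using $u^2=u$, $u\tau=\tau$, $\tau^2=0$, one computes in characteristic $2$ that $\mu^2=a^2 u$, so $\mu^2=1$ forces $a=1$; thus $\mu=u+b\tau$ and $1+\sigma$ becomes right multiplication by $b\tau$.

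The nontriviality of the triangular structure on $A_n$ (inherited from the construction, generalizing $R=1\otimes 1+d\otimes d$ for $A_0$) forces $b\ne 0$. Then $b\tau\cdot(-)$ sends $u\mapsto b\tau$ and annihilates the augmentation ideal $I$ (since the socle is annihilated by $I$). Consequently $\Image(1+\sigma)=\k\tau$ and $\Ker(1+\sigma)=I$, giving $X_{n+1}^{(1)}=I/\k\tau$. For $n\ge 1$ this quotient is $X_n$ (the middle piece of the filtration $0\subset I^2\subset I\subset A_n$, where $\k\tau=I^2$ and $I/I^2=X_n$); for $n=0$ it equals $0=X_0$ (since then $I=\k\tau$ is already the socle).

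The main obstacle is establishing $b\ne 0$, i.e., verifying that the triangular structure on $A_n=\wedge X_n$ transports to a nonidentity involution on $X_{n+1}\otimes X_{n+1}$. In the base case $n=0$ this is immediate from direct computation with the $R$-matrix $R=1\otimes 1+d\otimes d$. For larger $n$, one must trace through the inductive construction of the embedding $\C_{2n+1}\subset\C_{2n+2}$ to confirm that the nontrivial $R$-matrix term of $A_n$ contributes a nonzero socle component to $\mu$ under the iso $X_{n+1}\otimes X_{n+1}\cong A_n$.
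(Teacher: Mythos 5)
Your proposal addresses only part (xi) of the theorem, taking the construction of the categories and parts (vii) and (ix) as given; the theorem as stated also requires building $\C_{2n+1}$, $\C_{2n+2}$ and proving (i)--(x), which occupies most of Section 3 of the paper. Judged purely as a proof of (xi), your reduction is sound as far as it goes: $s_{X_{n+1},X_{n+1}}$ is indeed a morphism in $\C_{2n+1}$, hence an $A_n$-linear endomorphism of the regular module, so it is multiplication by $\mu=u+b\tau$ (this is essentially the paper's observation that $\End(X_{n+1}\otimes X_{n+1})=\k[t]/(t^2)$ and $c_{XX}=1+\beta t$, used in Remark \ref{allsym}), and once $b\ne 0$ the cohomology of $1+s$ is $I/\k\tau\cong X_n$.

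The genuine gap is exactly the point you flag: you never prove $b\ne 0$ for $n\ge 1$, and the route you propose --- ``trace through the inductive construction to see that the $R$-matrix of $A_n$ contributes a nonzero socle component'' --- does not work as stated. The braiding of $\C_{2n+2}$ on the new summand $\M_n$ is not given by any formula: it is produced abstractly by the extension/obstruction theory of \cite{ENO}, \cite{DN1}, \cite{DN2}, so there is no explicit expression for $s_{X_{n+1},X_{n+1}}$ to trace, and the $R$-matrix of $A_n$ only controls the braiding on the old component $\C_{2n+1}$. What is needed is an internal argument, and this is precisely what the paper's Lemma \ref{wedge2} provides: if $s_{X,X}=\id$ (equivalently $\wedge^2X=0$, equivalently your $b=0$), then $X$ is invertible; since $\FPdim(X_{n+1})=2\cos(\pi/2^{n+2})\ne 1$, the object $X_{n+1}$ is not invertible, so $b\ne 0$. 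That lemma itself requires a real proof (a length count on $S^2$ of a filtration to show such an $X$ is simple, then applying this to $X\otimes X^*$), so it cannot be waved through. With Lemma \ref{wedge2} inserted, your computation inside $A_n$ closes the argument; the paper instead combines Lemma \ref{wedge2} with the three-step filtration of Lemma \ref{frob} and a length count in $X_{n+1}\otimes X_{n+1}\cong A_n$, which simultaneously yields $\wedge^2X_{n+1}=\one$, $\wedge^3X_{n+1}=0$ (needed for (vii) at the next stage) and $X_{n+1}^{(1)}=X_n$, so the filtration argument buys strictly more than your module-theoretic computation.
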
 

Theorem \ref{maint} is proved in the next section. 

The categories $\C_n$ for $n\le 2$ were known before. Namely, 
the category $\C_1$ is the category of modules over the Hopf algebra 
$\k[d]/d^2$, with braiding defined by 
$$
c=s (1\otimes 1+d\otimes d);
$$
it is a reduction modulo 2 of the category of supervector spaces $\sVec_K={\rm Ver}_4^+(K)$ and appears in \cite{Ven}. 
The category $\C_2$ is the category ${\mathcal T}_{2,2}$, which happens
 to be abelian; this fact was observed by V. Ostrik. It has two simple objects $\one$ and $X=X_1$ and one more 
 indecomposable $P$ -- the projective cover of $\one$, which is a nontrivial extension of $\one$ by $\one$, and one has 
 $X\otimes X=P$, so ${\rm FPdim}(X)=\sqrt{2}$. This category is a reduction modulo $2$ of the Ising category ${\rm Ver}_4(K)$. 

\begin{corollary}\label{grring} Let $\mO_n$ be the ring of integers in the field $\QQ(2\cos(\pi/2^{n+1}))=\QQ(\zeta_n+\zeta_n^{-1})$, 
where $\zeta_n=\exp(\pi i/2^{n+1})$; that is, $\mO_n=\ZZ[2\cos(\pi/2^{n+1})]$ (\cite{W}, p.16, Proposition 2.16). Then the map $\FPdim$ defines an isomorphism of the Grothendieck ring 
of $\C_{2n}$ onto $\mO_n$. Under this isomorphism, the simple objects $X_S$ of $\C_{2n}$ map to the numbers 
$d_S:=\prod_{j\in S}d_j$, where $d_j:=\zeta_n^{2^{n-j}}+\zeta_n^{-2^{n-j}}$.   
\end{corollary}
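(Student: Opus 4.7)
The plan is to check that $\FPdim$ itself is the claimed isomorphism, by showing first that it surjects onto $\mO_n$ and then that source and target are free $\ZZ$-modules of the same rank $2^n$.

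First, $\FPdim\colon {\rm Gr}(\C_{2n})\to\mathbb{R}$ is a ring homomorphism. By part (vi) of Theorem \ref{maint}, $\FPdim(X_j)=2\cos(\pi/2^{j+1})$. Unpacking the definition of $d_j$, writing $\zeta_n=\exp(\pi i/2^{n+1})$ one has
\[
d_j=\zeta_n^{2^{n-j}}+\zeta_n^{-2^{n-j}}=2\cos\bigl(\pi\cdot 2^{n-j}/2^{n+1}\bigr)=2\cos(\pi/2^{j+1})=\FPdim(X_j),
\]
so by multiplicativity of $\FPdim$ under tensor product one gets $\FPdim(X_S)=\prod_{j\in S}d_j=d_S$ for every $S\subset\{1,\dots,n\}$. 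Since the $X_S$ $\ZZ$-span the Grothendieck ring, the image of $\FPdim$ is exactly $\ZZ[d_1,\dots,d_n]\subset\mathbb{R}$.

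Next, I would check that $\ZZ[d_1,\dots,d_n]=\mO_n$. The double-angle identity $4\cos^2\theta=2+2\cos 2\theta$ gives the recursion $d_j^2=2+d_{j-1}$ for $j\geq 2$ (and $d_1^2=2$), so iteratively $d_{n-k}\in\ZZ[d_n]$ for every $k\geq 0$. Hence $\ZZ[d_1,\dots,d_n]=\ZZ[d_n]=\mO_n$, where the last equality is the definition of $\mO_n$ in the statement of the corollary. Consequently, $\FPdim$ maps ${\rm Gr}(\C_{2n})$ surjectively onto $\mO_n$.

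For injectivity I would compare ranks. By part (ix) of Theorem \ref{maint}, the simple objects $X_S$ form a $\ZZ$-basis of ${\rm Gr}(\C_{2n})$, so ${\rm Gr}(\C_{2n})$ is $\ZZ$-free of rank $2^n$. On the other hand, $\mO_n$ is $\ZZ$-free of rank $[\QQ(\zeta_n+\zeta_n^{-1}):\QQ]=2^n$: indeed $\zeta_n$ is a primitive $2^{n+2}$-th root of unity so $[\QQ(\zeta_n):\QQ]=\varphi(2^{n+2})=2^{n+1}$, and the maximal totally real subfield $\QQ(\zeta_n+\zeta_n^{-1})$ has index two in $\QQ(\zeta_n)$. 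Since any surjective $\ZZ$-linear map between free $\ZZ$-modules of the same finite rank is automatically an isomorphism, $\FPdim$ is a ring isomorphism ${\rm Gr}(\C_{2n})\cong\mO_n$ carrying $X_S$ to $d_S$, as required. The whole argument is a direct consequence of parts (vi) and (ix) of Theorem \ref{maint} together with standard cyclotomic arithmetic, so I do not expect a serious obstacle; the only care needed is the observation that the surjection between free $\ZZ$-modules of equal finite rank is forced to be injective, which upgrades the rank-$2^n$ surjection to an honest ring isomorphism.
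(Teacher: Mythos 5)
Your proof is correct, and its overall skeleton matches the paper's: identify $\FPdim(X_S)=d_S$, show the image is all of $\mO_n$, and conclude by comparing ranks, since a surjection between free $\ZZ$-modules of rank $2^n$ must be an isomorphism. The one place where you argue differently is the surjectivity step. The paper works additively: using the binary expansion of $r$, it writes each element $\zeta_n^r+\zeta_n^{-r}$ of the standard integral generating set of $\mO_n$ as a $\ZZ$-linear combination of the $d_S$, so the $\ZZ$-span of the $d_S$ alone is already all of $\mO_n$. You instead exploit the multiplicative structure: the image of $\FPdim$ is a subring, the double-angle relation $d_j^2=2+d_{j-1}$ (with $d_1^2=2$) shows $\ZZ[d_1,\dots,d_n]=\ZZ[d_n]$, and $\ZZ[d_n]=\mO_n$ is exactly the description of $\mO_n$ quoted from Washington in the statement. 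Both arguments are sound; the paper's is self-contained at the level of the additive basis of $\mO_n$ (and makes visible why the $d_S$ themselves form a $\ZZ$-basis), while yours is slightly slicker because it leans on the single-generator presentation $\mO_n=\ZZ[2\cos(\pi/2^{n+1})]$ and on $\FPdim$ being a ring homomorphism, mirroring the categorical relation $X_n^2=2\cdot\one+X_{n-1}$ of Theorem \ref{maint}(vi) at the numerical level.
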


\begin{proof} It is well known that $[\QQ(\zeta_n+\zeta_n^{-1}):\QQ]=2^n$. 
Also, ${\rm FPdim}(X_S)=d_S$. It is clear that 
any number of the form $\zeta_n^r+\zeta_n^{-r}$, $r=1,\dots,2^n-1$ is a linear combination of $d_S$
(by taking the binary expansion of $r$). This implies that $\FPdim\colon {\rm Gr}(\C_{2n})\to \mathcal O_n$ is an isomorphism
mapping $X_S$ to $d_S$. 
\end{proof} 

\begin{corollary}\label{fusionrules} Let $S\subset [1,n]$ and $i\in [1,n]$. Let $k_S(i)$ be the largest integer $\le i$ which does not belong to $S$. Then in ${\rm Gr}(\C_{2n})$ we have 
$$
X_iX_S=X_{k_S(i)\cup S\setminus [k_S(i)+1,i]}+2\sum_{k=k_S(i)+1}^i X_{S\setminus [k,i]}
$$
where we agree that for any $S$, $X_{0\cup S}=0$. 
\end{corollary}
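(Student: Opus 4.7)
The plan is to run an induction on $i$, fed by two ingredients the theorem already supplies: the quadratic relation $X_i^2=2\cdot\one+X_{i-1}$ of (vi), together with the convention $X_0=0$ from (iii); and the fact (built into (ix)) that $X_S=\prod_{j\in S}X_j$ in ${\rm Gr}(\C_{2n})$, so that the $X_j$ pairwise commute and $X_iX_S=X_{S\cup\{i\}}$ whenever $i\notin S$. This last observation already reproduces the formula when $i\notin S$, since then $k_S(i)=i$ and the sum in the statement is empty.

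For $i\in S$ I would write
\[
X_iX_S \;=\; X_i^2\cdot X_{S\setminus\{i\}} \;=\; 2\,X_{S\setminus\{i\}}+X_{i-1}\cdot X_{S\setminus\{i\}},
\]
noting that $2\,X_{S\setminus\{i\}}$ is precisely the $j=i$ term of the double sum in the claim. The base case $i=1$ needs no further work: $X_0=0$ kills the second summand, matching $k_S(1)=0$ and the convention that the leading term $X_{\{0\}\cup\,\cdot\,}$ is zero. For the inductive step I would split on whether $i-1$ lies in $S$. If $i-1\notin S$ then $k_S(i)=i-1$, and since $i-1\notin S\setminus\{i\}$ the product $X_{i-1}\cdot X_{S\setminus\{i\}}$ is the simple object $X_{\{i-1\}\cup S\setminus\{i\}}$, which is exactly the leading term of the claimed formula. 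If $i-1\in S$, I would observe that $k_{S\setminus\{i\}}(i-1)=k_S(i)$ (removing $i$ from $S$ does not affect the largest integer $\le i-1$ missing from $S$), apply the inductive hypothesis to $X_{i-1}\cdot X_{S\setminus\{i\}}$, and identify the resulting terms using the set-theoretic identities $(S\setminus\{i\})\setminus[k+1,i-1]=S\setminus[k+1,i]$ and $(S\setminus\{i\})\setminus[j,i-1]=S\setminus[j,i]$, both valid because $i\in S$.

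The only mildly delicate point I anticipate is the boundary case $\{1,\dots,i\}\subset S$, i.e.\ $k_S(i)=0$: one has to verify that iterating the reduction $X_jX_{S'}=2X_{S'\setminus\{j\}}+X_{j-1}X_{S'\setminus\{j\}}$ from $j=i$ down to $j=1$ leaves no stray leading contribution, and that the convention $X_{\{0\}\cup\,\cdot\,}=0$ is exactly what matches the base case $X_1^2=2\cdot\one$. This is routine once the base case is in hand, and with it the induction closes.
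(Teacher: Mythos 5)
Your argument is correct. It does, however, take a slightly different route from the paper: there the corollary is deduced in one line from Corollary \ref{grring}, i.e.\ from the isomorphism $\FPdim\colon {\rm Gr}(\C_{2n})\to \mO_n$ sending $X_S$ to $d_S=\prod_{j\in S}d_j$, so the fusion rule becomes an identity among the real cyclotomic numbers $d_S$, checked via $d_j^2=2+d_{j-1}$, $d_0=0$. You instead verify the same recursion directly in the Grothendieck ring, using only Theorem \ref{maint}(vi) ($X_j^2=2\cdot\one+X_{j-1}$, $X_0=0$) and the multiplicativity $X_S=\prod_{j\in S}X_j$ coming from (ix), and you handle the boundary convention $X_{0\cup S}=0$ correctly in the base case $i=1$ and in the case $[1,i]\subset S$. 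The computations are ultimately the same; what your version buys is independence from Corollary \ref{grring} (in particular from the linear independence of the $d_S$, i.e.\ the injectivity of $\FPdim$ on the Grothendieck group), at the cost of an explicit induction on $i$ with the case split on whether $i-1\in S$, whereas the paper's proof is shorter because the ring-of-integers identification has already been established.
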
 

\begin{proof} This follows immediately from Corollary \ref{grring}. 
\end{proof}

Corollary \ref{fusionrules}
 gives the following recursion to determine the structure constants $N_{ST}^U$ of the multiplication of ${\rm Gr}(\C_{2n})$, 
such that $X_SX_T=\sum_U N_{ST}^UX_U$ (where $S,T,U\in [1,n]$). Namely, given $S,T,U\subset [1,n-1]$, 
we have 
$$
N_{ST}^{n\cup U}=N_{n\cup S,T}^U=N_{S,n\cup T}^U=N_{n\cup S,n\cup T}^{n\cup U}=0,
$$
$$
N_{n\cup S,T}^{n\cup U}=N_{S,n\cup T}^{n\cup U}=N_{ST}^U,
$$
and
\begin{gather*}
N_{n\cup S,n\cup T}^U=N_{ST}^{[1+\max U,n-1]\cup U\setminus \max U}+2\sum_{k=\max U}^{n-1}N_{ST}^{[k+1,n-1]\cup U},\ U\ne \varnothing; \\
N_{n\cup S,n\cup T}^\varnothing=2\sum_{k=0}^{n-1}N_{ST}^{[k+1,n-1]}. 
\end{gather*}

It is easy to see that in fact this sum can contain at most one nonzero term. 
Namely, we have the following corollary. 

\begin{corollary}\label{pwr2} 
Let $K$ be the largest integer $\le n-1$ contained in none or both of the sets $S,T$. Then if $U\notin \varnothing$ then 
$$
N_{n\cup S,n\cup T}^U=\begin{cases}
N_{ST}^{[1+\max U,n-1] \cup U\setminus \max U},\ K=\max U\\
2N_{ST}^{[K+1,n-1]\cup U},\ K>\max U\\
2N_{ST}^{[\max U+1,n-1]\cup U}, \ K<\max U
\end{cases} 
$$
and 
$$
N_{n\cup S,n\cup T}^\varnothing=
2N_{ST}^{[K+1,n-1]}.
$$
In particular, all nonzero numbers $N_{ST}^U$ are powers of $2$. 
\end{corollary}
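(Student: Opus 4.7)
The plan is to collapse the sum in Corollary \ref{fusionrules} to a single surviving term by identifying, case by case, which of the candidate structure constants $N_{ST}^V$ are forced to vanish in $\mO_{n-1}$. The assertion that all nonzero $N_{ST}^U$ are powers of $2$ will then follow by induction on $n$.

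My key step is a factorization in the Grothendieck ring. By the definition of $K$, every $\ell\in[K+1,n-1]$ lies in exactly one of $S,T$, so each such generator $X_\ell$ appears exactly once in the commutative product $X_SX_T$ and may be pulled out:
$$
X_SX_T \;=\; Y\cdot X_{[K+1,n-1]}, \qquad Y\;:=\;X_{S\cap[1,K]}\,X_{T\cap[1,K]}.
$$
Because the relations $X_i^2=2+X_{i-1}$ never raise indices, $Y$ lies in the span of $\{X_V:V\subseteq[1,K]\}$, and for such $V$ one has $X_V\cdot X_{[K+1,n-1]}=X_{V\cup[K+1,n-1]}$. This yields a first vanishing rule: $N_{ST}^V$ equals the coefficient of $X_{V\cap[1,K]}$ in $Y$ when $V\supseteq[K+1,n-1]$, and vanishes otherwise. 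A second rule follows from the fact that $K$ is, by definition, in both or in neither of $S,T$, so $X_K$ either does not appear in the factors of $Y$ at all or appears twice and collapses via $X_K^2=2+X_{K-1}$; in either case $Y$ actually lies in the span of $\{X_V:V\subseteq[1,K-1]\}$, and so the coefficient of $X_W$ in $Y$ vanishes whenever $K\in W$.

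Armed with these two rules, the corollary becomes a short combinatorial check. For each index $V_*=[m+1,n-1]\cup U\setminus\{m\}$ or $V_k=[k+1,n-1]\cup U$ appearing in Corollary \ref{fusionrules} (with $m=\max U$), I ask (i) whether it contains $[K+1,n-1]$, and (ii) whether its intersection with $[1,K]$ contains $K$. A direct verification in each of the four cases $K=m$, $K>m$, $K<m$, and $U=\varnothing$ shows that exactly one index passes both tests, and it coincides with the one appearing in the claimed formula. For instance, when $K>m$: rule (i) kills $V_k$ for $k>K$, and rule (ii) kills $V_k$ for $m\le k<K$ (since $[k+1,K]\ni K$) as well as $V_*$ (since $[m+1,K]\ni K$), so only $V_K=[K+1,n-1]\cup U$ survives, yielding the factor of $2$ in the statement.

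The main obstacle here is the conceptual one of spotting the factorization above; once it is in place, both vanishing rules and the ensuing case analysis are mechanical. The final claim that all nonzero $N_{ST}^U$ are powers of $2$ is then immediate by induction on $n$, since the reduction just established writes each $N_{n\cup S,n\cup T}^U$ as either a single structure constant of $\mO_{n-1}$ or twice such a constant, and the base case $n=0$ is trivial.
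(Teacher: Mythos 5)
Your argument is correct and amounts to the paper's own (essentially unwritten) proof: the paper derives Corollary \ref{pwr2} from Corollary \ref{fusionrules} by asserting that at most one term of the sum survives, and your factorization $X_SX_T=Y\cdot X_{[K+1,n-1]}$ together with the two vanishing rules (a nonzero $N_{ST}^W$ forces $[K+1,n-1]\subseteq W$ and $K\notin W$) is precisely the verification of that assertion, with the power-of-two claim then following by the induction you describe. One small correction: in the case $K<\max U$ the surviving index $[\max U+1,n-1]\cup U$ may itself fail your two tests (for instance if $K\in U$), so the accurate statement is that \emph{at most} one candidate can be nonzero, namely the one named in the formula, which still gives the asserted equality since in the remaining situations both sides vanish.
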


\begin{proposition}\label{Serre} 
The category $\C_{2r-1}$ is a Serre subcategory of $\C_{2r+1}$ (and therefore of each $\C_n$, $n\ge 2r-1$). 
\end{proposition}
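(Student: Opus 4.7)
The plan is to identify the essential image of the embedding $\C_{2r-1}\hookrightarrow\C_{2r+1}$ with the preimage $U^{-1}(\C_{2r-1})$ under the forgetful functor $U\colon\C_{2r+1}\to\C_{2r}$ coming from the description $\C_{2r+1}=A_r\text{-mod}_{\C_{2r}}$ in (iii). Granted this identification, the proposition is formal: $U$ is exact (kernels and cokernels of $A_r$-module maps are computed on underlying objects), $\C_{2r-1}$ is a Serre subcategory of $\C_{2r}$ because it is a direct summand by (ix), and the preimage of a Serre subcategory under an exact functor is automatically closed under subobjects, quotients, and extensions.

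One inclusion of the identification is immediate, since $\C_{2r-1}\hookrightarrow\C_{2r+1}$ factors as $\C_{2r-1}\hookrightarrow\C_{2r}\hookrightarrow\C_{2r+1}$, where the second arrow equips objects of $\C_{2r}$ with the $A_r$-module structure induced by the augmentation $\varepsilon\colon A_r\to\one$. For the reverse, given $M\in\C_{2r+1}$ with $U(M)\in\C_{2r-1}$, I would show that the action $\rho\colon A_r\otimes M\to M$ factors through $\varepsilon$. Writing $A_r=\one\oplus X_r\oplus\one$ as in (vii), the restriction of $\rho$ to the $X_r$-summand is a morphism $X_r\otimes M\to M$ in $\C_{2r}$. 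By (ix), $X_r$ lies in the $\M_{r-1}$-block of $\C_{2r}$ and $\M_{r-1}$ is a $\C_{2r-1}$-bimodule category, so $X_r\otimes M$ also lies in $\M_{r-1}$ while $M$ lies in $\C_{2r-1}$. Morphisms between distinct blocks of an abelian category vanish, so this component of $\rho$ is zero.

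The main obstacle is then ruling out a nontrivial action by the top summand $\one_{\mathrm{top}}$ of $A_r$, which together with $X_r$ spans the augmentation ideal; by (vii), $\one_{\mathrm{top}}$ is the image of the surjection $\mu\colon X_r\otimes X_r\to\one$ given by evaluation. Associativity of the action forces the composite $X_r\otimes X_r\otimes M\xrightarrow{\mu\otimes\id}\one_{\mathrm{top}}\otimes M\xrightarrow{\rho|_{\one_{\mathrm{top}}}}M$ to coincide with the two-step action $X_r\otimes X_r\otimes M\xrightarrow{\id\otimes\rho|_{X_r}}X_r\otimes M\xrightarrow{\rho|_{X_r}}M$, which vanishes since $\rho|_{X_r}=0$. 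Because $\mu\otimes\id$ remains a surjection in $\C_{2r}$, this forces $\rho|_{\one_{\mathrm{top}}}=0$. Therefore $\rho$ factors through $\varepsilon$, placing $M$ in the essential image of $\C_{2r}\hookrightarrow\C_{2r+1}$ and, combined with $U(M)\in\C_{2r-1}$, in the essential image of $\C_{2r-1}\hookrightarrow\C_{2r+1}$, completing the identification and hence the proof.
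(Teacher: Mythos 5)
Your proof is correct and takes essentially the same approach as the paper's: both reduce to showing the $A_r$-action is forced to be trivial on an object whose underlying $\C_{2r}$-object lies in $\C_{2r-1}$, via the block decomposition of $\C_{2r}$ (equivalently, the vanishing $\Hom(X_r\otimes X_S,X_T)=0$ for $S,T\subset[1,r-1]$). Your packaging as the preimage of a Serre subcategory under the exact forgetful functor $U$, and your explicit associativity argument for killing the top summand $\one_{\mathrm{top}}$ of $A_r$, just make explicit two steps the paper leaves implicit.
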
 

\begin{proof} Recall that $\C_{2r+1}$ is the category of $A_r$-modules in $\C_{2r}$. 
Suppose $Y$ is an extension of two objects of $\C_{2r-1}$ inside $\C_{2r+1}$; thus, $Y$ is an $A_r$-module in $\C_{2r}$. 
Then all the composition factors of $Y$ as an object of $\C_{2r}$ are of the form $X_S$, $S\subset [1,r-1]$. Since 
$\Hom(X_r\otimes X_S,X_T)$ vanishes for $S,T\subset [1,r-1]$, the action of $A_r$ on $Y$ must be trivial. Thus 
$Y\in \C_{2r}$. Since $\C_{2r-1}$ is a direct summand in $\C_{2r}$, we have $Y\in \C_{2r-1}$, as desired. 
\end{proof} 

\begin{corollary}\label{subcat}
The only tensor subcategories of $\C_n$ are $\C_m$, $m\le n$. 
\end{corollary}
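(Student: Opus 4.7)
\textbf{Proof plan for Corollary~\ref{subcat}.}

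I will argue by induction on $n$. Fix $n$, suppose the corollary holds for all smaller indices, and let $\D \subset \C_n$ be a tensor subcategory. Set $m := \max\{i \ge 1 : X_i \in \D\}$, with the convention $m = 0$ if no such $i$ exists. The plan is to show that the simples of $\D$ coincide with those of $\C_{2m}$, that $\C_{2m} \subset \D$, and finally that $\D$ is either $\C_{2m}$ or $\C_{2m+1}$. First I prove $X_i \in \D$ for every $1 \le i \le m$ by reverse induction on $i$. Using property (ix) of Theorem~\ref{maint}, $X_i \otimes X_i \cong A_{i-1}$ is indecomposable in $\C_n$ with composition series $\one, X_{i-1}, \one$, so $A_{i-1} \in \D$. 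The counit $A_{i-1} \to \one$ is a morphism between objects of $\D$, and its kernel is the augmentation ideal $\mathfrak{m}_{i-1}$ (with socle $\one$ and head $X_{i-1}$), which therefore lies in $\D$. The socle inclusion $\one \hookrightarrow \mathfrak{m}_{i-1}$ is then a morphism in $\D$ whose cokernel is $X_{i-1}$, which lies in $\D$ by closure under cokernels. Together with the fusion rule $X_i \otimes X_j = X_{\{i, j\}}$ for $i \ne j$ (Corollary~\ref{fusionrules}), this gives $X_S = \bigotimes_{i \in S} X_i \in \D$ for every $S \subseteq [1, m]$; by property (iii), these are exactly the simples of both $\C_{2m}$ and $\C_{2m+1}$. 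Since this set is closed under fusion, every object of $\D$ has composition factors only among these simples, and Proposition~\ref{Serre} identifies the full subcategory of $\C_n$ consisting of such objects with $\C_{2m+1}$ (or $\C_{2m}$ when $n = 2m$). In particular $\D \subset \C_{2m+1}$.

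Next I show $\C_{2m} \subset \D$. The intersection $\D \cap \C_{2m-1}$ is a tensor subcategory of $\C_{2m-1}$ (using Proposition~\ref{Serre} for closure of kernels and cokernels) with max index $m - 1$, so by the inductive hypothesis it equals $\C_{2m-2}$ or $\C_{2m-1}$. But the object $A_{m-1} \cong X_m \otimes X_m$, an indecomposable of $\C_{2m-1}$ with composition series $\one, X_{m-1}, \one$, lies in $\D \cap \C_{2m-1}$ and is \emph{not} in $\C_{2m-2}$ (there, $\one$ and $X_{m-1}$ lie in different blocks, so no such indecomposable can occur), forcing $\D \cap \C_{2m-1} = \C_{2m-1}$. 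Using the block decomposition $\C_{2m} = \C_{2m-1} \oplus \M_{m-1}$ from (ix) and the equivalence $\M_{m-1} \cong \C_{2m-2}$ realized by $X_m \otimes (-)$, every object of $\M_{m-1}$ has the form $X_m \otimes V$ with $V \in \C_{2m-2} \subset \C_{2m-1} \subset \D$, and hence lies in $\D$. Thus $\C_{2m} \subset \D$.

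Finally, to distinguish $\D = \C_{2m}$ from $\D = \C_{2m+1}$ when $n \ge 2m+1$: the regular $A_m$-module $A_m^{\mathrm{reg}} \cong X_{m+1} \otimes X_{m+1}$ is an indecomposable object of $\C_{2m+1}$ with composition series $\one, X_m, \one$ that does not appear in $\C_{2m}$ (again because $\one$ and $X_m$ lie in different blocks of $\C_{2m}$). If $A_m^{\mathrm{reg}} \in \D$, then every projective cover $P_Y^{(2m+1)} = A_m^{\mathrm{reg}} \otimes P_Y^{(2m)}$ of $\C_{2m+1}$ lies in $\D$, and an argument parallel to the previous paragraph, built on the free $A_m$-modules $A_m^{\mathrm{reg}} \otimes V$ (with $V \in \C_{2m} \subset \D$), yields $\C_{2m+1} \subset \D$, so $\D = \C_{2m+1}$. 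Otherwise $\D \subset \C_{2m}$ and $\D = \C_{2m}$. The main obstacle is precisely this final dichotomy: proving rigorously that if $\D$ contains any object outside $\C_{2m}$ then it must contain $A_m^{\mathrm{reg}}$. I expect this to follow by extracting $A_m^{\mathrm{reg}}$ from the tensor square and kernel/cokernel operations applied to any nontrivial $A_m$-module in $\D$, exploiting that $A_m = \wedge X_m$ is a local commutative Hopf algebra generated by $X_m$ so that nontrivial action essentially forces the regular representation into $\D$.
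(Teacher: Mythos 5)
Your proof follows essentially the same three-stage plan as the paper's (first get all $X_i$ for $i\le m$ into $\D$, then $\C_{2m}\subset\D$, then decide between $\C_{2m}$ and $\C_{2m+1}$), but there are two genuine gaps, one of which you flag yourself and one of which you do not.

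The unflagged gap is in the step where you conclude $\D\subset\C_{2m+1}$. After showing $X_S\in\D$ for all $S\subseteq[1,m]$, you write that ``since this set is closed under fusion, every object of $\D$ has composition factors only among these simples.'' That is a non sequitur: closure of $\{X_S:S\subseteq[1,m]\}$ under fusion says nothing about whether $\D$ might contain some additional simple $X_U$ with $\max U>m$, which would not be in the list and would not be detected by your choice of $m=\max\{i:X_i\in\D\}$. What is actually needed is the converse implication: if $X_U\in\D$ then $X_{\max U}\in\D$, so that $\max U\le m$. The paper proves exactly this, via the Grothendieck-ring identity $X_U^2=X_T^2(2+X_{k-1})$ (with $k=\max U$, $T=U\setminus k$) to extract $X_{k-1}$ as a composition factor, then Frobenius twisting to get all $X_i$ for $i<k$, then $X_T\otimes X_U=X_T^2\otimes X_k$ to extract $X_k$. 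Without this, the definition of $m$ and the inclusion $\D\subset\C_{2m+1}$ are not justified.

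The flagged gap (the final dichotomy $\D=\C_{2m}$ versus $\D=\C_{2m+1}$) is where the paper's key trick lives, and your sketch doesn't reach it. The paper argues: if $\D\ne\C_{2r}$, pick a nontrivial $A_r$-module $Y\in\D$; if $Y$ is not faithful its annihilator is the one-dimensional socle of $A_r$, so $Y\otimes Y$ is faithful, and one may assume $Y$ is faithful; then $Y\otimes Y^*$ (with trivial $A_r$-action on $Y^*$) contains the free module $A_r$ as a submodule, so $A_r\in\D$; hence $A_r\otimes X\in\D$ for every $X\in\C_{2r}$, and every $A_r$-module, being a quotient of such a free module, lies in $\D$. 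Your expectation that ``nontrivial action essentially forces the regular representation into $\D$'' is the right intuition, but the concrete mechanism is the passage to a faithful module and the observation about $Y\otimes Y^*$, neither of which appears in your sketch.

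Two further remarks. First, your step establishing $X_{i-1}\in\D$ from $X_i\in\D$ (via the counit of $A_{i-1}=X_i\otimes X_i$ and the socle of the augmentation ideal) is correct and somewhat more concrete than the paper's Grothendieck-ring/Frobenius argument; it just doesn't substitute for the missing argument about $X_U$ with $\max U>m$. Second, your argument that $\C_{2m}\subset\D$ via the intersection $\D\cap\C_{2m-1}$, the inductive hypothesis, and the block decomposition does work, but it is considerably heavier than the paper's one-liner: $X_{[1,m]}\in\D$ is projective and tensor-generates $\C_{2m}$, so all projectives of $\C_{2m}$ lie in $\D$ and hence $\C_{2m}\subset\D$.
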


\begin{proof} It suffices to assume that $n=2r+1$. Let $\D\subset \C_{2r+1}$ be a tensor subcategory. 
Let $s$ be the largest integer such that $s\in S$ and $X_S\in \D$. By Proposition \ref{Serre}, 
this implies that $\D\subset \C_{2s+1}$, so we may assume that $s=r$. Thus $S=T\cup r$, where 
$T\subset [1,r-1]$. Hence $X_S=X_T\otimes X_r$, so 
$X_S^2=X_T^2(2+X_{r-1})$ in ${\rm Gr}(\C_{2r})$. 
This implies that every composition factor of $X_T^2X_{r-1}$ belongs 
to $\D$, in particular, $X_{r-1}$. Applying Frobenius, we see that $X_i\in \D$ for any $i\in [1,r-1]$. 
Thus, $X_T\in \D$, hence every composition factor of $X_TX_S=X_T^2X_r$ is in $\D$, 
in particular $X_r$. Thus, $X_L\in \D$ for each $L\subset [1,r]$. But $X_{[1,r]}$ is a projective object which tensor generates 
$\C_{2r}$. Thus $\C_{2r}\subset \D$. 

Recall that $\C_{2r+1}$ is the category of $A_r$-modules in $\C_{2r}$. Suppose $\D\ne \C_{2r}$. Then 
there exists $Y\in \D$ which is a nontrivial $A_r$-module. We claim that $Y$ can be chosen a faithful $A_r$-module. 
Indeed, if $Y$ is not faithful, then the annihilator of $Y$ in $A_r$ is the unit object sitting in degree $2$, 
so $Y\otimes Y$ is faithful. Now, if $Y$ is faithful then $Y\otimes Y^*$ contains $A_r$ as a submodule (where 
the action of $A_r$ on $Y^*$ is taken to be trivial). Thus, the free $A_r$-module $A_r$ belongs to $\D$. Thus $A_r\otimes X\in \D$ for any $X\in \C_{2r}$, which implies that any $A_r$-module belongs to $\D$ (as it is a quotient of $A_r\otimes X$ for some $X$), i.e., $\D=\C_{2r+1}$. 
\end{proof} 

\section{Proof of the main theorem} 

We will construct the categories $\C_n$ inductively starting from $\C_0=\Vec_\k$. Assume they have been constructed until 
$\C_{2n}$, with the claimed properties (except (i)), and let us construct $\C_{2n+1}$, $\C_{2n+2}$. Then we will separately prove (i). 

\subsection{Construction of $\C_{2n+1}$ and proof of (iii),(x), and the second part of (iv)}

Consider the algebra 
$A_n=\one\oplus X_n\oplus \one$ as defined in the theorem.  
This is a commutative and cocommutative Hopf algebra (as we are in characteristic 2). 
Hence $\C_{2n+1}:=A_n$-mod is a finite tensor category. Moreover, since $A_n$ is a local algebra, the simple objects of 
$\C_{2n+1}$ are the same as those of $\C_{2n}$, and the projective cover of a simple object $Y\in \C_{2n}$ in $\C_{2n+1}$ is the free $A_n$-module $A_n\otimes P_Y$, where $P_Y$ is the projective cover of $Y$ in $\C_{2n}$. This implies that 
$$
\FPdim(\C_{2n+1})=\FPdim(\C_{2n})\FPdim(A_n)=
$$
$$
=\frac{2^n}{\sin^2(\pi/2^{n+1})}\cdot 
4\cos^2(\pi/2^{n+2})=\frac{2^n}{\sin^2(\pi/2^{n+2})}. 
$$
Now let us put a symmetric structure on $\C_{2n+1}$. We will define this structure by the formula 
$$
c:=s \circ R,
$$
where $R\in \Hom(\one, A_n\otimes A_n)$ is a triangular structure on $A_n$ (the universal $R$-matrix); then $R$ defines a functorial isomorphism $R_{YZ}\colon Y\otimes Z\to Y\otimes Z$ for any $Y,Z\in \C_{2n+1}$. 
Namely, let $\tau\colon \one \to X_n\otimes X_n$ be the coevaluation map, and 
define $R$ by the formula 
$$
R=1\otimes 1+\tau+a\otimes a,
$$
where $a\colon \one\to A_n$ is the tautological inclusion in the top degree. 
It is easy to see that under suitable normalizations of $\tau$ and $a$ we have
\begin{equation}\label{taua}
\Delta(a)=a\otimes 1+1\otimes a+\tau,\ \tau_{13}\cdot \tau_{23}=\tau\otimes a, 
\end{equation}
which implies that $R$ satisfies the hexagon axioms $(\Delta\otimes 1)(R)=R_{13}R_{23}$, 
$(1\otimes \Delta)(R)=R_{13}R_{12}$. Also, \eqref{taua} implies that 
$$
\tau^2=\dim(X_n)a\otimes a=0,
$$ 
so 
$R_{21}R=1\otimes 1$, hence $R$ is a triangular structure, which defines a symmetric braiding on $\C_{2n+1}$.  

Thus, we have established parts (iii),(x), and the second part of (iv). 

\subsection{Proof of (ii) for $\C_{2n+1}$} 
Let us show that there are no symmetric tensor functors $\C_{2n+1}\to \C_{2n}$. Observe that 
the Frobenius--Perron dimensions of the simple objects $X_S$ of $\C_{2n}$ are linearly independent over $\QQ$ 
(in fact, they form a $\QQ$-basis of $\QQ(\cos(\pi/2^{n+1}))$). This implies that any tensor functor $F\colon \C_{2n+1}\to \C_{2n}$ must map $X_S$ to $X_S$ for each $S$. In particular, $F(X_{1,\dots, n})=X_{1,\dots, n}$, which is projective, which implies that $F$ is a surjective tensor functor, in fact 
already so when restricted to $\C_{2n}$ (see \cite{EGNO}, Subsection 6.3). Therefore, by \cite{EGNO}, Proposition 6.3.4, $F|_{\C_{2n}}: \C_{2n}\to \C_{2n}$ is an equivalence, and we may assume (by post-composing with its inverse) that $F|_{\C_{2n}}: \C_{2n}\to \C_{2n}$ is the identity. 

Now let $F: \C_{2n+1}\to \C_{2n}$ be a symmetric tensor functor. Let $G=\underline{\rm Aut}_{\otimes}(F)$ be the group scheme in $\C_{2n}$ corresponding to $F$, 
and $K=\pi_1(\C_{2n})\subset G$ be the fundamental group of $\C_{2n}$ (\cite{De1}, Section 8). Since the functor $F$ is a split surjection (i.e., we have an inclusion $\iota: \C_{2n}\hookrightarrow \C_{2n+1}$ such that $F\circ \iota\cong {\rm Id}$), the inclusion $K\hookrightarrow G$ 
is a split injection. Thus, $G\cong K\ltimes N$, where $N$ is a closed normal group subscheme in $G$. Let $H=O(N)^*$ be the group algebra of $N$ (a cocommutative Hopf algebra in $\C_{2n}$). Then $\C_{2n+1}$ is the category of $H$-modules in $\C_{2n}$, and $F$ is the forgetful functor (forgetting the structure of an $H$-module). 

If $\C$ is a finite symmetric tensor category and $B$ is a Hopf algebra in $\C$ then $B-{\rm mod}$ is a finite tensor category and 
the regular objects of these categories (see \cite{EGNO}, Definition 6.1.6) satisfy the equality $R_{B-{\rm mod}}=B\otimes R_\C$
in $K_0(B-{\rm mod})$, hence ${\rm FPdim}(B-{\rm mod})={\rm FPdim}(B){\rm FPdim}(\C)$.  
Thus ${\rm FPdim}(H)=\FPdim(\C_{2n+1})/\FPdim(\C_{2n})=2+\FPdim(X_n)$. Hence the augmentation ideal $I$ of $H$ is isomorphic to $X_n\oplus \one$ (since there are no nontrivial extensions between $\one$ and $X_n$ in $\C_{2n}$), i.e., $H=\one\oplus X_n\oplus \one$. 
Moreover, $I$ acts trivially in simple $H$-modules (as they are just simple objects of $\C_{2n}$ with the trivial action of $H$), hence it is nilpotent (i.e., $H$ is local). This easily implies that 
$H\cong A_n$ as an algebra (namely, the product $X_n\otimes X_n\to \one\subset I$ is nonzero since $H$ is a Hopf algebra). Thus, we may assume that $F$ is isomorphic to $F_n$ as an additive functor. In other words, we may assume that $F$ is obtained from $F_n$ by a Drinfeld twist $J$ of the Hopf algebra $A_n$. The twist $J$ must have the form 
$$
J=1\otimes 1+\lambda \tau+\mu a\otimes a,
$$
where $\lambda,\mu\in \k$. Since the functor $F$ is symmetric, we must have $(J^{21})^{-1}RJ=1\otimes 1$, which implies 
$\tau=0$, a contradiction. This proves (ii) for $\C_{2n+1}$. 

\subsection{Proof of (viii)}
Let us show that every simple object of $\C_{2n+1}$  occurs in the projective cover $P_n(\one)$ of $\one$ in $\C_{2n+1}$; in particular, this category is indecomposable. Indeed, for $n=0$ this is clear, so we can consider $n\ge 1$. 
We know that every simple object of $\C_{2n-1}$ occurs in $P_{n-1}(\one)$, and $P_n(\one)=\wedge X_n\otimes 
P_{n-1}(\one)$. Since $\wedge X_n=\one\oplus X_n\oplus \one$, we conclude that 
$P_n(\one)$ involves all simple objects of $\C_{2n}$, hence of $\C_{2n+1}$. This proves (viii).  

\subsection{Construction of $\C_{2n+2}$}

Now let us construct the symmetric tensor category $\C_{2n+2}$. For this, we can use \cite{DR}, Theorem 2.8, 
for $\mathcal S=\C_{2n}$, $H=A_n$, $\sigma=1+a$, $\beta=1$, $g=1$, $\omega=1$, 
$\Gamma\colon H^*\to H$ defined by $R=(\sigma\otimes \sigma)\Delta(\sigma)^{-1}$, 
$\lambda(a)=1$, $\lambda(1)=0$. Namely, Theorem 2.8 of \cite{DR} 
constructs a braided category, and in our case it is easy to see that the braiding is symmetric (since $\sigma^2=1$).  

The tensor category $\C_{2n+2}$ may also be constructed as a $\ZZ/2$-extension of $\C_{2n+1}$ using the methods of 
\cite{ENO} extended to the non-semisimple case in \cite{DN1}. Namely, we define 
$\C_{2n+2}=\C_{2n+1}\oplus \M_n$, where $\M_n=\C_{2n}$ as a $\C_{2n+1}$-module category
(which is exact since the functor $F_n$ is surjective), and make it a $\C_{2n+1}$-bimodule category 
by using the symmetric braiding on $\C_{2n+1}$. To define the tensor structure on $\C_{2n+2}$, we first need 
to show that the bimodule category $\M_n$ is invertible, and moreover defines an involution in the Brauer-Picard group 
${\rm BrPic}(\C_{2n+1})$. To this end, note that the $R$-matrix of $A_n$ is nondegenerate, i.e., defines a Hopf algebra isomorphism $\Gamma\colon A_n^*\to A_n$, and this isomorphism is symmetric since $R_{21}=R$. Hence, analogously to \cite{ENO}, Proposition 9.3, $\M_n$ is an involution in ${\rm BrPic}(\C_{2n+1})$, as desired. 

Now, according to \cite{ENO}, the first obstruction to lifting this structure to a structure of a tensor category on $\C_{2n+2}$ 
lies in $H^3(\ZZ/2,I)$, where $I$ is the group of invertible objects of the Drinfeld center of $\C_{2n+1}$. This Drinfeld center is the category of pairs $(Z,\phi)$, where $Z\in \C_{2n+1}$ and $\phi\colon Z\otimes ?\to ?\otimes Z$ is a functorial isomorphism satisfying consistency conditions. If $(Z,\phi)$ is invertible then $Z$ is invertible, hence $Z=\one$. 
Thus $I={\rm Aut}_\otimes({\rm Id})$, the category of tensor automorphisms of the identity functor of $\C_{2n+1}$.

We claim that $I=1$. Indeed, since $\C_{2n+1}$ is indecomposable, any $g\in I$ must act trivially on every simple object of $\C_{2n+1}$, in particular, on $X_{1,\dots,n}$, which is projective in $\C_{2n}$. Hence $g$ acts trivially on any object $Y$ of $\C_{2n}$, as $Y$ is a quotient of its projective cover $P_Y$, which is a direct summand 
in $Z\otimes X_{1,\dots,n}$ for some $Z\in \C_{2n}$. Thus, $g$ correspond to a grouplike element of the Hopf algebra $A_n$ (i.e., a character of $A_n^*$). But $A_n^*\cong A_n$, and its unique character is the counit, as desired.  
 
Hence the first obstruction vanishes. 
Also $H^2(\ZZ/2,I)$ vanishes, hence there is a unique way to pass to the next step 
of the extension process. 

The second obstruction now lies in $H^4(\ZZ/2,\k^\times)$, which vanishes. So 
the extension exists, and moreover it is unique since $H^3(\ZZ/2,\k^\times)$ also vanishes
(as we are in characteristic $2$). 

Moreover, by using a version of the extension theory of \cite{ENO} for braided and symmetric categories (\cite{DN2}) the category $\C_{2n+1}$ can be endowed with a braided  structure, which in fact turns out to be symmetric. Namely, following \cite{DN2}, to a finite braided tensor category $\C$ over $\k$ one can attach a simply connected classifying space $B_2\C$ with three nontrivial homotopy groups, $\pi_2$, $\pi_3$ and $\pi_4$. Then if $G$ is an abelian group then $G$-extensions of $\C$ as a braided category correspond to homotopy classes of maps $\phi: K(G,2)\to B_2\C$, where $K(G,2)$ is the Eilenberg-Mac Lane space attached to $G$. Such maps can also be classified using classical obstruction theory. Namely, at the first step we have to fix a homomorphism $\rho: G\to \pi_2$. Then we get the first obstruction $O_4(\rho)\in H^4(K(G,2),\pi_3)$. 
If it vanishes, then we go to the second step and get to make a choice $b$ in a torsor over $H^3(K(G,2),\pi_3)$, and we get the second obstruction $O_5(\rho,b)\in H^5(K(G,2),\pi_4)$. If it vanishes, then we go to the third step and get to choose an element $\gamma$ in a torsor over $H^4(K(G,2),\pi_4)$, and it determines the map $\phi$ uniquely up to homotopy. 

In our situation, $\C=\C_{2n+1}$, $G=\Bbb Z/2$, and we have already chosen the map $\rho$ given by the 
module category $\M_n$. Also, $\pi_3$ is the group of invertible objects in the symmetric center of $\C_{2n+1}$, i.e., $\pi_3=1$. Thus, $O_4$ automatically vanishes and we have a unique choice for $b$. Finally, $\pi_4=\k^\times$, 
so since $H_4(K(G,2))$ and $H_5(K(G,2))$ are 2-groups (see e.g. \cite{EM}, Theorem 22.1), we get 
$H^4(K(G,2),\pi_4)=H^5(K(G,2),\pi_4)=0$ (as ${\rm char}(\k)=2$, so $\k^\times$ is a uniquely 2-divisible group).
This implies that the map $\phi$ (and hence the corresponding braided extension) is uniquely determined by $\rho$. 

Furthermore, by using the version of the same theory for symmetric categories (with $H^j$ replaced by $H^{j+1}$, $\pi_i$ with $\pi_{i+1}$, $B_2$ with $B_3$ and $K(G,2)$ replaced by $K(G,3)$), also described in \cite{DN2}, one can show that the obtained braided extension is in fact symmetric. This can also be seen using Remark \ref{allsym} below. 

\subsection{Proof of (ii),(iv),(v),(vi) and (ix)} 
Now we have a symmetric tensor category $\C_{2n+2}$. By \cite{EGNO}, Theorem 3.5.2\footnote{Theorem 3.5.2 of \cite{EGNO} applies to semisimple categories, but the proof extends in a straightforward way to the non-semisimple case.}, we have $\FPdim(\C_{2n+1})=\FPdim(\M_n)$, hence $\FPdim(\C_{2n+2})=2\FPdim(\C_{2n+1})$, which yields (iv).  
Also, $\C_{2n+2}$ contains a simple object $X_{n+1}\in \M_n$ corresponding to $\one\in \C_{2n+1}$, and the simple objects of $\C_{2n+2}$ are of the form $X_S$ and $X_S\otimes X_{n+1}$, where $S\subset \lbrace{ 1,\dots,n\rbrace}$. 
This establishes (v) (since $X_{n+1}$ is the only object of smallest Frobenius-Perron dimension in $\M_n$) and also (ix), except its final statement. Also 
for any $Y\in \C_{2n+1}$ we have 
$$
\Hom_{\C_{2n+1}}(X_{n+1}\otimes X_{n+1},Y)=\Hom_{C_{2n+2}}(X_{n+1},Y\otimes X_{n+1})=\Hom_{\C_{2n}}(\bold 1,F(Y))=
\Hom_{\C_{2n+1}}(A_{n},Y),
$$ 
where $F: \C_{2n+1}\to \C_{2n}$ is the forgetful functor. This proves the final statement of (ix). 

Now, since 
$\FPdim(\C_{2n+1})=\FPdim(\M_n)$,   
we have 
$$
\FPdim(X_{n+1})^2=\frac{\FPdim(\C_{2n+1})}{\FPdim(\C_{2n})}=4\cos^2(\pi/2^{n+2}),
$$
hence $\FPdim(X_{n+1})=2\cos(\pi/2^{n+2})$. Also for Frobenius--Perron dimension reasons, in the Grothendieck group ${\rm Gr}(\C_{2n+2})$, we have 
$$
X_{n+1}^2=2\cdot \one+X_n,
$$ 
hence $\dim(X_{n+1})=0$, establishing (vi). Alternatively, these statements follow from (ix). 
Finally, once again for dimension reasons, there are no tensor functors $\C_{2n+2}\to \C_{2n+1}$, establishing (ii). 

\subsection{Proof of (vii) and (xi)}

We will need the following lemmas.

\begin{lemma}\label{frob} Let $\C$ be a symmetric tensor category over a field of characteristic $2$, and $X\in \C$. 
Then $X\otimes X$ has a 3-step filtration whose successive quotients are $\wedge^2X,X^{(1)},\wedge^2X$. 
\end{lemma}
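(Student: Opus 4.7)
The plan is to exploit that the braiding $s\colon X \otimes X \to X \otimes X$ is an involution, so in characteristic~$2$ the endomorphism $d = 1+s$ satisfies $d^2 = 1 + 2s + s^2 = 0$. Hence $\Image(d) \subseteq \Ker(d)$ as subobjects of $X \otimes X$, which gives the 3-step filtration
\[
0 \subseteq \Image(d) \subseteq \Ker(d) \subseteq X \otimes X
\]
that I would take as the desired one.

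Reading off successive quotients: the bottom is $\Image(d)$ itself; the middle is $\Ker(d)/\Image(d) = X^{(1)}$, by the definition of the Frobenius twist recalled at the start of Section~2; and the top is $(X \otimes X)/\Ker(d) \cong \Image(d)$, by the first-isomorphism-theorem factorization of $d$, which is valid in any abelian category. This already produces successive quotients $\Image(d)$, $X^{(1)}$, $\Image(d)$.

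It remains to identify $\Image(d)$ with $\wedge^2 X$. In characteristic~$2$ the antisymmetrizer $1-s$ and the symmetrizer $1+s$ coincide, both equal to $d$, and $\wedge^2 X$ is by definition the image of the antisymmetrizer. This convention is consistent with the rest of the paper: for $X = X_n$ it forces $\wedge^2 X_n = \one$, matching the decomposition $\wedge X_n = A_n = \one \oplus X_n \oplus \one$ from part~(vii), and in the Grothendieck group the identity $2\wedge^2 X_n + X_n^{(1)} = 2\cdot\one + X_{n-1}$ agrees with the fusion rule of part~(vi) together with $X_n^{(1)} = X_{n-1}$.

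The only step that requires any thought is this final convention check on $\wedge^2$; everything else is a direct consequence of $d^2 = 0$ and the standard factorization of a morphism through its image. I do not anticipate any real obstacle.
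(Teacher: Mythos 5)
Your proof is correct and is essentially the paper's own argument: the paper's proof consists precisely of the filtration $\mathrm{Im}(1+s)\subseteq \mathrm{Ker}(1+s)\subseteq X\otimes X$, with the identifications of the quotients (middle one $X^{(1)}$ by definition of the Frobenius twist, outer ones $\wedge^2X\cong \mathrm{Im}(1+s)$) left implicit exactly as you spell them out. Your extra consistency check of the $\wedge^2$ convention against parts (vi) and (vii) is harmless but not needed.
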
 

\begin{proof} The filtration is given by $F_1={\rm Im}(1+s)$, $F_2={\rm Ker}(1+s)$, $F_3=X\otimes X$. 
\end{proof} 

\begin{lemma}\label{wedge2} Let $\C$ be a symmetric tensor category over a field of any characteristic, and $X\ne 0$ be an object of $\C$ such that $\wedge^2X=0$ (or, equivalently, $S^2X\cong X\otimes X$). Then $X$ is invertible. 
\end{lemma}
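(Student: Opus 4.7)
The strategy is to rephrase $\wedge^2 X = 0$ as triviality of the self-braiding $s_{X,X}$ and then combine this with dimension-counting in the Grothendieck ring to force $X$ to be invertible.

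\emph{Step 1: Reformulation.} I first observe that $\wedge^2 X = 0$ is equivalent to $s_{X,X} = \id_{X \otimes X}$ in any characteristic. In characteristic $\ne 2$ this is because $\wedge^2 X$ is the image of the antisymmetrizer $(1-s)/2$. In characteristic $2$, Lemma~\ref{frob} identifies $\wedge^2 X$ with $\mathrm{Im}(1+s)$, and its vanishing again forces $s = \id$. The equivalent formulation $S^2 X \cong X \otimes X$ in the statement follows from the same identification, with $S^2 X = \ker(1+s)$ filling out the whole of $X \otimes X$.

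\emph{Step 2: Grothendieck-ring count.} Next, the identity $[X \otimes X] = [S^2 X] + [\wedge^2 X]$ holds in the Grothendieck ring in every characteristic: directly from the split decomposition in characteristic $\ne 2$, and in characteristic $2$ from the filtration of Lemma~\ref{frob} combined with $[S^2 X] = [\wedge^2 X] + [X^{(1)}]$ read off of the same filtration. Under the hypothesis this reduces to $\FPdim(X)^2 = \FPdim(S^2 X)$. I would then pair this with the uniform lower bound $\FPdim(\wedge^2 X) \ge \tfrac{1}{2} \FPdim(X)(\FPdim(X) - 1)$, equivalently $\FPdim(S^2 X) \le \tfrac{1}{2} \FPdim(X)(\FPdim(X)+1)$. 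Substituting gives $\FPdim(X)^2 \le \tfrac{1}{2}\FPdim(X)(\FPdim(X)+1)$, hence $\FPdim(X) \le 1$, and since $X \ne 0$ forces $\FPdim(X) \ge 1$, we conclude $\FPdim(X) = 1$.

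\emph{Step 3: From $\FPdim = 1$ to invertibility.} A nonzero object of Frobenius--Perron dimension $1$ is simple, and its class is invertible in the Grothendieck ring. The coevaluation $\mathrm{coev}\colon \one \to X \otimes X^*$ is a nonzero morphism realizing $\one$ as a subobject of $X \otimes X^*$, and since $\FPdim(X \otimes X^*) = 1$ equals $\FPdim(\one)$ this subobject must be all of $X \otimes X^*$. Hence $X \otimes X^* \cong \one$ and $X$ is invertible with inverse $X^*$.

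\emph{Main obstacle.} The uniform FP-dimension bound on $S^2 X$ (equivalently, the lower bound on $\FPdim(\wedge^2 X)$) is the delicate step in a possibly non-semisimple positive-characteristic symmetric tensor category. In the Tannakian/moderate-growth setting it is the classical Schur-functor identity $\FPdim(\wedge^2 X) = \binom{\FPdim X}{2}$. In general, one can establish it by reducing to the semisimplification of $\langle X \rangle$ -- on which $\FPdim$ is preserved -- and then applying Ostrik's fiber functor to $\Ver_p$ (or $\Vec$ in the Tannakian case), where the inequality becomes a direct computation on the simple objects.
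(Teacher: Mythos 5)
Your Step 1 is fine: $\wedge^2X=0$ does force $s_{X,X}=\mathrm{id}$ in any characteristic. But the argument then hinges entirely on the inequality $\FPdim(S^2X)\le \tfrac12\FPdim(X)(\FPdim(X)+1)$, and this is a genuine gap, for two reasons. First, it is out of reach in the generality of the statement: the lemma is about an arbitrary symmetric tensor category over an arbitrary field, where $\FPdim$ need not even be defined (the Grothendieck ring can have infinitely many simples, e.g.\ Deligne's interpolation categories, which also have no fiber functor, so no moderate-growth or Tannakian crutch is available). Second, the route you sketch to prove it does not work: semisimplification does \emph{not} preserve Frobenius--Perron dimension (it kills negligible objects -- e.g.\ the regular representation of $\ZZ/p$ in characteristic $p$, of $\FPdim=p$, dies in $\Ver_p$; in this very paper $\FPdim$ drops under the quotients ${\rm Tilt}_2\to\mathcal T_{n,2}$), and Ostrik's theorem applies only to \emph{finite semisimple} symmetric categories, which the semisimplification of $\langle X\rangle$ need not be. Note also that under your hypothesis $s_{X,X}=1$, the inequality you need reduces to ``$\FPdim X\le 1$'', which is essentially the content of the lemma itself, so the reduction is close to circular. (Your Step 3, from $\FPdim=1$ to invertibility, is fine where $\FPdim$ makes sense.)

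The paper's proof avoids dimensions altogether and is purely a length count, which is why it works in full generality. If $X$ had a nonzero subobject $Y$ with nonzero quotient $Z$, then $\mathrm{gr}\,S^2X$ is a subquotient of $S^2(Y\oplus Z)=S^2Y\oplus(Y\otimes Z)\oplus S^2Z$, whereas $\mathrm{gr}(X\otimes X)=Y\otimes Y\oplus 2(Y\otimes Z)\oplus Z\otimes Z$ has strictly larger length; this contradicts $S^2X\cong X\otimes X$, so $X$ is simple. Then, since $s_{X,X}=1$ implies $s_{X\otimes X^*,X\otimes X^*}=1$, the same argument shows $X\otimes X^*$ is simple, and as it contains $\one$ via coevaluation, $X\otimes X^*\cong\one$, i.e.\ $X$ is invertible. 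If you want to salvage your numerical approach, you would have to both restrict the statement (say, to finite categories) and supply an honest proof of the $\FPdim$ bound for $S^2$ in positive characteristic, where the classical binomial formulas fail (in characteristic $2$ one has $[S^2X]=[\wedge^2X]+[X^{(1)}]$, and $\FPdim(X^{(1)})$ is a Galois twist of $\FPdim(X)$, not $\FPdim(X)$ itself); the length-count argument is both simpler and stronger.
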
 

\begin{proof} Let us first show that $X$ is simple. Assume the contrary, i.e., that we have a 2-step filtration on $X$ with successive quotients $Y,Z\ne 0$. Then ${\rm gr}S^2X$ is a quotient of 
$$
S^2{\rm gr}X=S^2(Y\oplus Z)=S^2Y\oplus (Y\otimes Z)\oplus S^2Z.
$$ 
This is a contradiction with $S^2X\cong X\otimes X$, since 
$$
{\rm gr}(X\otimes X)=Y\otimes Y\oplus 2(Y\otimes Z)\oplus Z\otimes Z,
$$ 
i.e., has strictly larger length than ${\rm gr}S^2X$. 

Now note that $s_{X,X}=1$, hence $s_{X\otimes X^*,X\otimes X^*}=1$.
Thus by the above argument $X\otimes X^*$ is simple, hence $X\otimes X^*=\one$ and $X$ is invertible. 
\end{proof}

Now we are ready to prove (vii) and (xi). Namely, let us compute $\wedge^2X_{n+1}$ and $\wedge^3X_{n+1}$. 
By Lemma \ref{frob} we have a 3-step filtration on $X_{n+1}\otimes X_{n+1}$ with successive quotients 
$\wedge^2X_{n+1}$, $X_{n+1}^{(1)}$, $\wedge^2X_{n+1}$. Since $X_{n+1}$ is not invertible, by Lemma \ref{wedge2} this implies that $\wedge^2X_{n+1}=\one$  and $X_{n+1}^{(1)}=X_n$, proving (xi). 

Also note that the cyclic permutation $c$ is not the identity on $X_{n+1}^{\otimes 3}$ (as by Lemma \ref{wedge2}, $s_{X_{n+1},X_{n+1}}\ne 1$). This implies that $1+c+c^2$ is not invertible (as $c^3-1=0$). Thus ${\rm Ker}(1+c+c^2)=V\otimes \k^2$, where $\k^2$ is the 2-dimensional irreducible $S_3$-module and $V$ a nonzero object (namely, $V=S_{(2,1)}X_{n+1}$, the Schur functor attached to the partition $(2,1)$, which is well defined in characteristic $2$). But $X_{n+1}^{\otimes 3}=2X_{n+1}\oplus X_n\otimes X_{n+1}$, hence $V=X_{n+1}$. But we also have $X_{n+1}=
\wedge^2X_{n+1}\otimes X_{n+1}=\wedge^3X_{n+1}\oplus V$. Hence $\wedge^3X_{n+1}=0$, proving (vii). 

\subsection{The invariants in tensor powers of $X_n$}

Let us compute the dimension of the space of invariants in $X_n^{\otimes r}$. 
It is clear that if $r$ is odd then this space is zero, so it suffices to compute
the invariants in $X_n^{\otimes 2m}$. Let $d_{mn}=\dim \Hom(\one,X_n^{\otimes 2m})$. 

\begin{proposition}\label{inva} 
Let $f_n(z)=\sum_{m\ge 0}d_{mn}z^m$.
Then 
\begin{equation}\label{fnz}
f_n(z)=\frac{(t+t^{-1})(t^{2^{n+1}-1}-t^{-2^{n+1}+1})}{t^{2^{n+1}}-t^{-2^{n+1}}},
\end{equation}
where $z=(t+t^{-1})^{-2}$. 
\end{proposition}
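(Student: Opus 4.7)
The plan is to identify $d_{mn}$ with a walk count on a Dynkin diagram and then extract the generating function by spectral decomposition.

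By self-duality of $X_n$ from (v),
\[
d_{mn} = \dim\Hom(\one, X_n^{\otimes 2m}) = \dim\End(X_n^{\otimes m}),
\]
and every morphism involved lies inside the full rigid Karoubian monoidal subcategory $\mathcal{T}_{n+1,2}\subset \C_{2n}$ from (i), in which $X_n$ is realised as the fundamental tilting module $T_1$ over $SL(2,\k)$ (it is the unique indecomposable of Frobenius--Perron dimension $2\cos(\pi/2^{n+1})$). The non-negligible indecomposable tiltings $T_0=\one,\,T_1=X_n,\dots,T_{2^{n+1}-2}$ then index the $2^{n+1}-1$ vertices of the Dynkin diagram $A_{2^{n+1}-1}$, with $T_0$ sitting at a terminal vertex.

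The main step is to identify $d_{mn}$ with $(A^{2m})_{00}$, the number of closed walks of length $2m$ based at the $T_0$-vertex. The algebra $\End(T_1^{\otimes m})$ is the quotient of the Temperley--Lieb algebra at parameter $\delta=\FPdim T_1 = 2\cos(\pi/2^{n+1})$ by the ideal of morphisms factoring through negligible tiltings, and the dimension of this quotient is standardly $(A^{2m})_{00}$. One sees this by comparing with the semisimple Verlinde category $\Ver_{2^{n+1}}(K)$ in characteristic zero, where the analogous $\dim\End(V_1^{\otimes m})$ is computed via Clebsch--Gordan truncation and is governed by the same Temperley--Lieb quotient; the nontrivial Hom spaces between distinct tilting indecomposables in $\mathcal{T}_{n+1,2}$ precisely compensate for the merging of Verlinde simples, so the two endomorphism algebras have the same dimension.

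With $d_{mn}=(A^{2m})_{00}$ established, spectral decomposition finishes the argument. The adjacency matrix of $A_{N-1}$ (with $N=2^{n+1}$) has eigenvalues $\lambda_j=2\cos(j\pi/N)$ for $j=1,\dots,N-1$, with orthonormal eigenvectors whose terminal-vertex components are $\sqrt{2/N}\sin(j\pi/N)$. Summing the resulting geometric series gives
\[
f_n(z)=\frac{2}{N}\sum_{j=1}^{N-1}\frac{\sin^2(j\pi/N)}{1-4z\cos^2(j\pi/N)},
\]
and after the substitution $z=1/(t+t^{-1})^2$ this collapses, via the partial-fraction identity for $U_{N-2}(u/2)/U_{N-1}(u/2)$ (with $U_k$ the Chebyshev polynomial of the second kind, characterised by $(t^k-t^{-k})/(t-t^{-1})=U_{k-1}((t+t^{-1})/2)$) and the product decomposition $t^N-t^{-N}=(t-t^{-1})\prod_j((t+t^{-1})-2\cos(j\pi/N))$, to the stated rational function in $t$.

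The main obstacle will be rigorously establishing the walk-count identification, since the Grothendieck ring of $\C_{2n}$ has only $2^n$ simples while $A_{2^{n+1}-1}$ has $2^{n+1}-1$ vertices. The correct venue for the walk combinatorics of $d_{mn}$ is the Karoubian tilting subcategory $\mathcal{T}_{n+1,2}$ and not the abelian category $\C_{2n}$; indecomposable tiltings with larger-than-one endomorphism rings in characteristic $2$ are what make the counts agree with the characteristic-$0$ Verlinde walk count.
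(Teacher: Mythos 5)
Your proposal is circular within the logical structure of the paper. You identify $d_{mn}=\dim\Hom_{\C_{2n}}(\one,X_n^{\otimes 2m})$ with the number of closed walks at the $T_0$-vertex of $A_{2^{n+1}-1}$, justified by the assertion that ``every morphism involved lies inside the full rigid Karoubian monoidal subcategory $\mathcal{T}_{n+1,2}\subset\C_{2n}$ from (i).'' But the fullness of the embedding $\mathcal{T}_{n+1,2}\hookrightarrow\C_{2n}$ is precisely what part (i) of the main theorem asserts, and that assertion is established only later, in Proposition~\ref{fullf}. Moreover, Proposition~\ref{fullf} is proved by computing the invariants $D_{mn}=\dim\Hom_{\mathcal{T}_{n+1,2}}(\one,T_1^{\otimes 2m})$ via the Dynkin-diagram walk count (exactly as you propose) and then \emph{comparing} the answer with the independently computed $d_{mn}$ from Proposition~\ref{inva} to conclude $d_{mn}=D_{mn}$, i.e., that $F$ is full. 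So your argument, unwound, computes $D_{mn}$ rather than $d_{mn}$; the equality of the two is the content of fullness, which cannot be assumed here without short-circuiting the paper's argument.

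The paper's own proof of Proposition~\ref{inva} proceeds entirely inside the abelian categories $\C_{k}$, never invoking the tilting subcategory. Using $X_n\otimes X_n\cong A_{n-1}$ (the regular $A_{n-1}$-module, from part (ix)), self-duality of $A_{n-1}$, and Frobenius reciprocity for the forgetful functor $\C_{2n-1}\to\C_{2n-2}$, one derives the recursion
\begin{equation*}
d_{mn}=\sum_{s\ge 0}\binom{m-1}{2s}2^{m-1-2s}d_{s,n-1},
\end{equation*}
equivalently the functional equation $f_n(z)=1+\frac{z}{1-2z}\,f_{n-1}\!\left(\frac{z^2}{(1-2z)^2}\right)$ with $f_0=1$, and then verifies that the stated rational expression in $t$ satisfies it. Your spectral-decomposition computation of the walk count is essentially the argument used later in Proposition~\ref{fullf}, so the combinatorics is fine; the gap is in having no independent access to $d_{mn}$. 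If you wanted to pursue your route, you would first have to establish fullness of $\mathcal{T}_{n+1,2}\hookrightarrow\C_{2n}$ by some other means (e.g.\ by directly identifying $\C_{2n}$ with an abelian envelope of $\mathcal{T}_{n+1,2}$), but the paper's inductive recursion is both simpler and logically prior.
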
 

\begin{proof}
Since $X_0=0$, we have $f_0(z)=1$. 

Let $n,m\ge 1$. Recall that by Theorem \ref{maint}(ix) we have $X_n\otimes X_n=A_{n-1}$, the regular representation of the algebra $A_{n-1}$ as an object of $\C_{2n-1}$. Thus, 
$$
d_{mn}=\dim \Hom_{\C_{2n-1}}(\one,A_{n-1}^{\otimes m})=\dim \Hom_{\C_{2n-1}}(A_{n-1},A_{n-1}^{\otimes m-1}),
$$
using that $A_{n-1}\cong A_{n-1}^*$ as an $A_{n-1}$-module. Thus, by Frobenius reciprocity, we get
$$
d_{mn}=\dim \Hom_{\C_{2n-2}}(\one,A_{n-1}^{\otimes m-1})=
\dim \Hom_{\C_{2n-2}}(\one,(2\cdot \one\oplus X_{n-1})^{\otimes m-1}).
$$
This implies that 
$$
d_{mn}=\sum_{s\ge 0}\binom{m-1}{2s}2^{m-1-2s}d_{s,n-1}.
$$
In terms of generating functions, this recursion has the form 
$$
f_n(z)=1+\frac{z}{1-2z}f_{n-1}\left(\frac{z^2}{(1-2z)^2}\right),
$$
and it determines $f_n(z)$ for all $n$ from the initial condition $f_0(z)=1$. 

It remains to observe that function \eqref{fnz} satisfies this recursion. 
\end{proof} 

\subsection{Proof of (i)}
It remains to prove (i). For this we will use the following universal property of the tilting category. 

\begin{proposition}\label{uni} 
Let $\D$ be a symmetric tensor category over a field $\k$ of any characteristic. Then $\k$-linear symmetric monoidal functors $F\colon {\rm Tilt}(SL(2,\k))\to \D$ correspond to objects 
$X\in \D$ such that 

(1) $\wedge^2X\cong \one$ and $\wedge^3X=0$, or 

(2) $X=0$ in characteristic $2$, or 

(3) $X$ is an odd invertible object in characteristic $3$ (where ``odd" means that the braiding acts on $X\otimes X$ by $-1$). 

The correspondence is given by $F\mapsto F(T_1)$.
\end{proposition}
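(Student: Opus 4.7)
The plan is to handle the two directions of the correspondence separately, with the substantive work concentrated in the reverse direction of case (1).

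\emph{Forward direction.} Given a $\k$-linear symmetric monoidal functor $F$, set $X := F(T_1)$. In ${\rm Tilt}(SL(2,\k))$, the object $T_1$ is the standard two-dimensional representation of $SL(2)$, so $\wedge^2 T_1 \cong \one$ and $\wedge^3 T_1 = 0$; both conditions are expressible purely in terms of the symmetric monoidal structure and hence transfer to $X$, placing it in case (1). The alternative possibilities (2) and (3) correspond to $F$ factoring through the tensor-ideal quotients ${\rm Tilt}(SL(2,\k)) \to {\rm Ver}_p$ for $p=2,3$: namely ${\rm Ver}_2 = \Vec_\k$ sends $T_1 \mapsto 0$, and ${\rm Ver}_3 = \sVec_\k$ sends $T_1$ to the odd invertible line.

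\emph{Reverse direction, cases (2) and (3).} Take $F$ to be the composition ${\rm Tilt}(SL(2,\k)) \to {\rm Ver}_p \hookrightarrow \D$, where the second arrow is the unique symmetric tensor embedding determined by $X$ (in case (3), sending the odd invertible object of ${\rm Ver}_3$ to $X$, which is allowed precisely because $X$ is odd and invertible).

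\emph{Reverse direction, case (1).} I would use a generators-and-relations presentation of ${\rm Tilt}(SL(2,\k))$. Since this is the Karoubi envelope of the full subcategory $\mathcal{F}$ on tensor powers $T_1^{\otimes m}$, it suffices to define $F$ on $\mathcal{F}$ and then extend. The subcategory $\mathcal{F}$ is generated as a $\k$-linear symmetric monoidal rigid category by $T_1$ together with evaluation and coevaluation morphisms $\varepsilon \colon T_1 \otimes T_1 \to \one$ and $\eta \colon \one \to T_1 \otimes T_1$ coming from $\wedge^2 T_1 \cong \one$, subject to (a) the snake (rigid duality) identities, (b) antisymmetry of $\varepsilon$ and $\eta$ under the braiding, and (c) the cubic relation encoding $\wedge^3 T_1 = 0$. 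From the hypotheses on $X$ one extracts analogous data $\varepsilon_X, \eta_X$ satisfying (a)--(c), defining $F$ on $\mathcal{F}$ uniquely up to natural isomorphism; extending to the Karoubi envelope finishes the construction.

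\emph{Main obstacle.} The hardest step is establishing this presentation of $\mathcal{F}$: that $\End_{{\rm Tilt}(SL(2,\k))}(T_1^{\otimes n})$ is generated by cups, caps, and swaps, subject only to the relations (a)--(c). This is a form of the second fundamental theorem of invariant theory for $SL(2,\k)$ and requires genuine care in characteristics $2$ and $3$, where the antisymmetry relation (b) degenerates (as $-1=1$ in characteristic $2$) and the cubic relation (c) becomes less restrictive---precisely the reason why cases (2) and (3) emerge as additional solutions not captured by relations (a)--(c) in case (1) alone.
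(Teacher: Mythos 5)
Your approach---constructing $F$ on the additive full monoidal subcategory $\mathcal{F}$ generated by $T_1$ via a diagrammatic (Temperley--Lieb) presentation and then extending to the Karoubi envelope---is precisely the route of Ostrik's Theorem 2.4 in [O1], which is all the paper itself cites for this proposition (with [BEO] for the details). So you have located the right argument.

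There are, however, two genuine gaps. In the forward direction, "both conditions are expressible purely in terms of the symmetric monoidal structure and hence transfer" is not a proof in characteristics $2$ and $3$: there $\wedge^2$ and $\wedge^3$ are \emph{not} images of idempotents (the group orders $2$ and $6$ are not invertible), and a $\k$-linear monoidal functor out of a merely Karoubian source does not automatically preserve images of non-idempotent operators. What one needs are morphism identities that hold already in $\mathrm{Tilt}$: $1-s=\eta\circ\epsilon$ on $T_1\otimes T_1$, and $\sum_{\sigma\in S_3}\mathrm{sgn}(\sigma)\sigma=0$ on $T_1^{\otimes 3}$. These are preserved by $F$, and then (if $X:=F(T_1)\neq 0$) they give $\wedge^2 X=\mathrm{Im}(\eta_X\epsilon_X)\cong\one$ and $\wedge^3 X=0$; that is how cases (2) and (3) actually emerge as the degenerate alternatives. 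Second, and more centrally, the presentation of $\mathcal{F}$ by cups, caps, and swaps modulo (a)--(c) that you flag as the "main obstacle" is the entire substance of the proposition: it is equivalent to the characteristic-free Temperley--Lieb description of $\mathrm{Hom}(T_1^{\otimes m},T_1^{\otimes n})$ via crossingless matchings. This does hold---the tilting Hom spaces are free $\ZZ$-lattices whose reduction mod $p$ is still the full Hom space---but your write-up only names the obstacle without resolving it, which is exactly where the paper also defers to [O1]/[BEO]. A small further point: in case (3), any odd invertible $X$ arising as $F(T_1)$ automatically satisfies $X^{\otimes 2}\cong\one$, since $\one\cong\mathrm{Im}(\eta_X\epsilon_X)\subset X\otimes X$ with the latter simple; your embedding $\mathrm{Ver}_3\hookrightarrow\D$ needs this.
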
 

\begin{proof} Analogous to the proof of Theorem 2.4 of \cite{O1}.\footnote{See \cite{BEO} for more details.} 
\end{proof} 

Let $Q_{n,p}$ be the polynomial (with integer coefficients) defined by the formula
$$
Q_{n,p}(2\cos x)\colon =\frac{\sin (p^nx)}{\sin x}.
$$ 
We write $Q_{n,p}=Q_{n,p}^+-Q_{n,p}^-$, where $Q_{n,p}^+$ is the sum of all the terms of $Q_{n,p}$ 
with positive coefficients and $Q_{n,p}^-$ is minus the sum of the terms with negative coefficients. 

The following corollary will be used in the proof of Theorem \ref{unipro}.

\begin{corollary}\label{uni1}
Let $\D$ be a symmetric tensor category over $\k$. Then $\k$-linear symmetric monoidal functors $F\colon \mathcal{T}_{n,p}\to \D$ correspond to objects 
$X\in \D$ such that $Q_{n,p}^+(X)\cong Q_{n,p}^-(X)$ and 

(1) $\wedge^2X\cong \one$ and $\wedge^3X=0$, or 

(2) $X=0$ in characteristic $2$ ($n=1$), or 

(3) $X$ is an odd invertible object in characteristic $3$ ($n=1$). 

The correspondence is given by $F\mapsto F(T_1)$.
\end{corollary}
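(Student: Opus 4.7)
The plan is to combine Proposition~\ref{uni} with the observation that $\mathcal{T}_{n,p}$ is the Karoubian quotient of ${\rm Tilt}_p$ by the tensor ideal generated by the single object $T_{p^n-1}$. To justify this description of $\mathcal{I}_n$, one notes that $T_{p^n-1}\in\mathcal{I}_n$ while, conversely, for each $m\ge p^n-1$ the module $T_m$ occurs as the unique summand of highest weight $m$ in $T_{p^n-1}\otimes T_1^{\otimes(m-p^n+1)}$, so every $T_m$ with $m\ge p^n-1$ is a summand of some $T_{p^n-1}\otimes Y$. Hence a $\k$-linear symmetric monoidal functor $F\colon \mathcal{T}_{n,p}\to\D$ is the same data as a $\k$-linear symmetric monoidal functor $\widetilde F\colon {\rm Tilt}_p\to\D$ satisfying $\widetilde F(T_{p^n-1})=0$, and by Proposition~\ref{uni} these correspond to objects $X=\widetilde F(T_1)$ of $\D$ satisfying one of (1), (2), (3).

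The next step is the key identity
$$
Q_{n,p}^+(T_1)\;\cong\; T_{p^n-1}\oplus Q_{n,p}^-(T_1)\qquad\text{in }{\rm Tilt}_p.
$$
This follows from character theory: $T_1$ has character $2\cos x=t+t^{-1}$, while $T_{p^n-1}$ coincides with the Steinberg/Weyl module $V_{p^n-1}$, whose character is $\sin(p^nx)/\sin x=Q_{n,p}(2\cos x)$, so both sides have character $Q_{n,p}^+(2\cos x)$. Since the indecomposable tilting modules $T_m$ have pairwise distinct characters (triangular in the Weyl basis), the character map from the split Grothendieck monoid of ${\rm Tilt}_p$ is injective; combined with Krull--Schmidt for ${\rm Tilt}_p$, this upgrades the resulting equality of classes to an honest isomorphism of objects.

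Applying $\widetilde F$ to this isomorphism gives $Q_{n,p}^+(X)\cong \widetilde F(T_{p^n-1})\oplus Q_{n,p}^-(X)$ in $\D$. If $\widetilde F(T_{p^n-1})=0$ this yields $Q_{n,p}^+(X)\cong Q_{n,p}^-(X)$; conversely, assuming $Q_{n,p}^+(X)\cong Q_{n,p}^-(X)$ gives $Q_{n,p}^-(X)\cong \widetilde F(T_{p^n-1})\oplus Q_{n,p}^-(X)$, whence Krull--Schmidt in $\D$ forces $\widetilde F(T_{p^n-1})=0$. The bijection is thereby realized by $F\mapsto F(T_1)$ as in Proposition~\ref{uni}. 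The main obstacle is exactly this Krull--Schmidt cancellation step, which requires $\D$ to be Hom-finite and Karoubian; this is a standing assumption for the finite tensor categories (such as the $\C_n$) in which the corollary will be used, in particular in the proof of Theorem~\ref{unipro}.
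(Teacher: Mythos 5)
Your proof is correct and follows essentially the same route as the paper's: reduce to Proposition~\ref{uni} via the observation that a monoidal functor from $\mathcal{T}_{n,p}$ is a monoidal functor from ${\rm Tilt}_p$ killing $T_{p^n-1}$, establish $Q_{n,p}^+(T_1)\cong T_{p^n-1}\oplus Q_{n,p}^-(T_1)$ by character theory and Krull--Schmidt, and then cancel. You spell out two points the paper leaves implicit (why $T_{p^n-1}$ generates $\mathcal I_n$, and the Krull--Schmidt cancellation in $\D$, which in fact holds automatically since a tensor category in the sense of \cite{EGNO} is locally finite), but the argument is the same.
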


\begin{proof} The tensor ideal $\mathcal I_n$ is generated by the object 
$T_{p^n-1}$. This is the irreducible standard module ${\rm St}_n$ over $SL(2,\k)$ with highest weight 
$p^n-1$ (the $n$-th Steinberg representation). Since characters of tilting modules are linearly independent, 
in the split Grothendieck ring of ${\rm Tilt}(SL(2,\k))$ we have $T_{p^n-1}=Q_{n,p}(T_1)$ (since this identity holds at the level of characters, as $T_{p^n-1}$ is a standard module). Hence $T_{p^n-1}\oplus Q_{n,p}^-(T_1)\cong Q_{n,p}^+(T_1)$. This implies that 
if $X=F(T_1)$ then $Q_{n,p}^-(X)$ is always a direct summand in $Q_{n,p}^+(X)$, and 
for a monoidal functor $F: {\rm Tilt}(SL(2,\k))\to \D$, the condition that $F(Q_{n,p}^+(T_1))\cong F(Q_{n,p}^-(T_1))$ (i.e., 
$Q_{n,p}^+(X)\cong Q_{n,p}^-(X)$) is equivalent to the condition that $F(T_{p^n-1})=0$. 
Thus, the corollary follows from Proposition \ref{uni}. 
\end{proof} 

Now let us return to the case ${\rm char}(\k)=2$. To simplify notation, we write $Q_n$ and $Q_n^\pm$ for $Q_{n,2}$ and $Q_{n,2}^\pm$. In $\C_{2n}$, $n\ge 1$,  we have the object $X_n$ such that $\wedge^2X_n=\one$, $\wedge^3X_n=0$. Recall also that $X_0=0$. 
Hence by Proposition \ref{uni} we have a symmetric monoidal functor 
$$
F\colon {\rm Tilt}(SL(2,\k))\to \C_{2n}.
$$ 
which sends $T_1$ to $X_n$. Moreover, we have 
$$
F(T_{2^{n+1}-1})=Q_{n+1}(X_n)
$$
in the Grothendieck ring of $\C_{2n}$. But $Q_{n+1}(X_n)=0$, since the eigenvalues of multiplication by $X_n$ are Galois conjugates of $2\cos(\pi/2^{n+1})$, which are exactly the roots of $Q_{n+1}$. This implies that the class of $F(T_{2^{n+1}-1})$ in the Grothendieck ring of $\C_{2n}$ is zero, 
thus $F(T_{2^{n+1}-1})$ is itself zero.

Thus, we obtain 

\begin{corollary}\label{symmon}
We have a symmetric monoidal functor $F\colon {\mathcal T}_{n+1,2}\to \C_{2n}$ such that $F(T_1)=X_n$. 
\end{corollary}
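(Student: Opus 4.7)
The plan is to apply the universal property of $\mathcal{T}_{n+1,2}$ formulated in Corollary \ref{uni1}. This reduces the construction of the desired symmetric monoidal functor to exhibiting an object $X\in\C_{2n}$ with $F(T_1)=X$ satisfying two things: (a) one of the tilting-datum conditions of Proposition \ref{uni}, and (b) the relation $Q_{n+1}^+(X)\cong Q_{n+1}^-(X)$ that guarantees the Steinberg tilting module is killed. The obvious candidate is $X=X_n$.

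First I would discharge condition (a). For $n\geq 1$, Theorem \ref{maint}(vii) already gives $\wedge^2 X_n\cong \one$ and $\wedge^3 X_n=0$, putting us in case (1) of Proposition \ref{uni}; for the base case $n=0$ we have $X_0=0$, which is case (2). In either situation this produces a symmetric monoidal functor $F\colon {\rm Tilt}(SL(2,\k))\to \C_{2n}$ sending $T_1$ to $X_n$.

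Next I would verify (b), the main content. The ideal $\mathcal{I}_{n+1}$ is generated, as a tensor ideal, by the $n{+}1$-st Steinberg $T_{2^{n+1}-1}$, and linear independence of tilting characters yields the identity $T_{2^{n+1}-1}=Q_{n+1}(T_1)$ in the split Grothendieck ring of ${\rm Tilt}(SL(2,\k))$. Hence it suffices to show $Q_{n+1}(X_n)=0$ in ${\rm Gr}(\C_{2n})$. For this I would invoke Corollary \ref{grring}: the map $\FPdim$ identifies ${\rm Gr}(\C_{2n})$ with $\mO_n\subset\mathbb{R}$, sending $X_n$ to $2\cos(\pi/2^{n+1})$. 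The defining formula $Q_{n+1}(2\cos x)=\sin(2^{n+1}x)/\sin x$ vanishes at $x=\pi/2^{n+1}$ since $\sin\pi=0$, so $Q_{n+1}(X_n)=0$ in the Grothendieck ring. Because ${\rm Gr}(\C_{2n})$ is free abelian on the classes of the simple objects, a class equal to $0$ forces the underlying object $F(T_{2^{n+1}-1})$ itself to be zero. Consequently $F$ annihilates $\mathcal{I}_{n+1}$ and descends to $\mathcal{T}_{n+1,2}\to\C_{2n}$, as required.

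The potential obstacle is the vanishing $Q_{n+1}(X_n)=0$, but thanks to the explicit identification of the Grothendieck ring supplied by Corollary \ref{grring} this reduces to a one-line trigonometric identity; no further categorical input is needed. Everything else is bookkeeping with the universal properties already established.
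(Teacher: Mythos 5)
Your proposal is correct and follows essentially the same route as the paper: obtain the functor from ${\rm Tilt}(SL(2,\k))$ via Proposition \ref{uni} using $\wedge^2X_n\cong\one$, $\wedge^3X_n=0$ (resp.\ $X_0=0$), then kill the Steinberg object by showing $Q_{n+1}(X_n)=0$ in ${\rm Gr}(\C_{2n})$ and noting that a vanishing class forces the object itself to vanish. The only (harmless) variation is that you justify $Q_{n+1}(X_n)=0$ via the isomorphism ${\rm Gr}(\C_{2n})\cong\mO_n$ of Corollary \ref{grring} and the identity $Q_{n+1}(2\cos(\pi/2^{n+1}))=0$, whereas the paper phrases the same fact in terms of the eigenvalues of multiplication by $X_n$ being Galois conjugates of $2\cos(\pi/2^{n+1})$.
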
 

It remains to show that the functor $F$ of Corollary \ref{symmon} is fully faithful. To this end, we first prove the following lemma. 

\begin{lemma}\label{faith} 
Let $\C$ be a Karoubian monoidal category over a field $\k$ of characteristic $p$ and $F\colon {\mathcal T}_{n+1,p}\to \C$ be an additive monoidal functor. Let ${\rm St}_n=T_{p^n-1}$ be the $n$-th Steinberg module. If $F({\rm St}_n)\ne 0$ then $F$ is faithful. 
\end{lemma}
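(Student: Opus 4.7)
The strategy is to show that the kernel of $F$, regarded as a tensor ideal, must be zero by invoking the classification of tensor ideals in ${\rm Tilt}_p(SL(2,\k))$. Let $\mathcal{J}$ be the collection of morphisms $\phi$ in $\mathcal{T}_{n+1,p}$ with $F(\phi)=0$; because $F$ is additive and monoidal, $\mathcal{J}$ is a two-sided tensor ideal, and faithfulness of $F$ amounts to $\mathcal{J}=0$.

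I would first note that $\End_{\mathcal{T}_{n+1,p}}({\rm St}_n)=\k$, because ${\rm St}_n=T_{p^n-1}$ is an absolutely simple $SL(2,\k)$-module (the $n$-th Steinberg module), and this property descends to the quotient since $p^n-1<p^{n+1}-1$. Since $F({\rm St}_n)\ne 0$ by hypothesis, $F(1_{{\rm St}_n})=1_{F({\rm St}_n)}$ is nonzero, so $1_{{\rm St}_n}\notin\mathcal{J}$.

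Next I would invoke the classification of thick tensor ideals in ${\rm Tilt}_p(SL(2,\k))$: they form a totally ordered descending chain
$$
{\rm Tilt}_p=\mathcal{I}_0\supsetneq\mathcal{I}_1\supsetneq\mathcal{I}_2\supsetneq\cdots,
$$
where (as recorded in the introduction) $\mathcal{I}_k$ is the additive Karoubian full subcategory spanned by the $T_m$ with $m\ge p^k-1$, equivalently the tensor ideal generated by ${\rm St}_k$. Passing to $\mathcal{T}_{n+1,p}={\rm Tilt}_p/\mathcal{I}_{n+1}$, the tensor ideals inherit the chain $0\subsetneq\bar{\mathcal{I}}_n\subsetneq\cdots\subsetneq\bar{\mathcal{I}}_0=\mathcal{T}_{n+1,p}$, with $\bar{\mathcal{I}}_n$ generated as a tensor ideal by $1_{{\rm St}_n}$. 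Every nonzero tensor ideal in $\mathcal{T}_{n+1,p}$ therefore contains $1_{{\rm St}_n}$; combined with $1_{{\rm St}_n}\notin\mathcal{J}$ this forces $\mathcal{J}=0$, completing the proof.

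The main obstacle I expect is the identification $\bar{\mathcal{I}}_n=(1_{{\rm St}_n})$, which reduces to showing that every indecomposable $T_m$ with $p^n-1\le m\le p^{n+1}-2$ appears as a direct summand of a tensor product of the form ${\rm St}_n\otimes T_\ell$ for suitable $\ell$. This is a classical consequence of Donkin's tensor product formulas for $SL_2$ tilting modules; if it is not taken as a black box, one verifies it by direct induction on $m$ using the explicit decomposition of ${\rm St}_n\otimes T_1$ and iterating. This is the one substantive ingredient that must be supplied.
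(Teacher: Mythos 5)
Your strategy is genuinely different from the paper's, but it contains a gap that the paper's argument is precisely designed to avoid. The kernel $\mathcal{J}=\ker F$ is a tensor ideal \emph{of morphisms}, and to conclude $\mathcal{J}=0$ from $1_{{\rm St}_n}\notin\mathcal{J}$ you need to know that \emph{every nonzero tensor morphism ideal} of $\mathcal{T}_{n+1,p}$ contains $1_{{\rm St}_n}$. What you invoke is the classification of \emph{thick tensor ideals of objects} -- the chain $\mathcal{I}_0\supsetneq\mathcal{I}_1\supsetneq\cdots$ -- and the fact that $\bar{\mathcal{I}}_n$ is generated by ${\rm St}_n$. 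But a tensor morphism ideal need not be determined by its support (the objects whose identities it contains), and in general a nonzero morphism ideal need contain no identity morphism at all. Passing from the object-ideal chain to the statement you need about morphism ideals is exactly the nontrivial content, and it is not a formal consequence of the chain; it is a separate theorem (known for ${\rm Tilt}(SL_2)$, but essentially as deep as what you are trying to prove).

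The paper instead gives a short direct argument that, given any nonzero $f\colon\one\to Z$ in $\mathcal{T}_{n+1,p}$, produces $1_{{\rm St}_n}$ explicitly in the tensor ideal $\mathcal{I}(f)$: tensor $f$ with ${\rm St}_n$ to get a nonzero morphism $f\otimes 1\colon{\rm St}_n\to Z\otimes{\rm St}_n$ (nonzero because tensoring by a nonzero object in a rigid monoidal category is faithful); decompose $Z\otimes{\rm St}_n={\rm St}_n^{\oplus m}\oplus M$ with $M$ built from $T_i$, $p^n\le i\le p^{n+1}-1$; apply the linkage principle to get $\Hom({\rm St}_n,M)=0$, so $f\otimes 1$ factors through ${\rm St}_n^{\oplus m}$; since $\End({\rm St}_n)=\k$, some component is an isomorphism, giving a retraction $g$ with $g\circ(f\otimes 1)=1_{{\rm St}_n}$. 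This both proves the lemma and, as a byproduct, establishes the very fact about morphism ideals that your argument needs. If you want to keep your route, you must either cite an explicit classification of tensor \emph{morphism} ideals in ${\rm Tilt}_p(SL_2)$ (not merely thick object ideals), or reproduce an argument equivalent to the paper's.
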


\begin{proof}
 It suffices to show that if $f\colon \bold 1\to Z$ is a morphism in ${\rm Tilt}_{n+1,p}$ 
such that $F(f)=0$ then $f=0$. Assume the contrary, i.e., that $f\ne 0$. Let ${\mathcal I}(f)$ be the tensor ideal generated by $f$. Consider the morphism $f\otimes 1\colon {\rm St}_n\to Z\otimes {\rm St}_n$. We have $Z\otimes {\rm St}_n={\rm St}_n^{\oplus m}\oplus M$, where $M$ is a direct sum of $T_i$, $p^n\le i\le p^{n+1}-1$. Moreover, by the linkage principle (\cite{J}, Section 6) we have 
$\Hom({\rm St}_n,M)=0$. Hence there exists $g\colon Z\otimes {\rm St}_n\to {\rm St}_n$ such that $g\circ (f\otimes 1)={\rm Id}_{{\rm St}_n}$, i.e., ${\rm Id}_{{\rm St}_n}\in \mathcal{I}(f)$. Thus, since the functor $F$ kills ${\mathcal I}(f)$, it kills ${\rm Id}_{{\rm St}_n}$, hence kills ${\rm St}_n$ itself, a contradiction. 
\end{proof} 

\begin{proposition}\label{faith1} The functor $F$ is faithful. 
\end{proposition}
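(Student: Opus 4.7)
The plan is to apply Lemma \ref{faith}: once we know that $F({\rm St}_n)=F(T_{2^n-1})$ is nonzero in $\C_{2n}$, faithfulness of $F$ follows immediately. So the entire task reduces to checking this single non-vanishing.

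To do so, I would exploit the object-level isomorphism
$$
T_{2^n-1}\oplus Q_n^-(T_1)\cong Q_n^+(T_1)
$$
in ${\rm Tilt}(SL(2,\k))$ that was already recorded in the proof of Corollary \ref{uni1}; it comes from the fact that the Steinberg ${\rm St}_n=T_{2^n-1}$ is a standard module, so its character equals the Weyl character $Q_n(t+t^{-1})$, together with linear independence of tilting characters and the Krull--Schmidt property. Applying the monoidal functor $F$, taking classes in ${\rm Gr}(\C_{2n})$, and then passing to Frobenius--Perron dimensions yields
$$
\FPdim F(T_{2^n-1})=Q_n(\FPdim X_n)=Q_n(2\cos(\pi/2^{n+1}))=\frac{\sin(\pi/2)}{\sin(\pi/2^{n+1})}=\frac{1}{\sin(\pi/2^{n+1})},
$$
where I use Theorem \ref{maint}(vi) for $\FPdim X_n$ and the defining formula $Q_n(2\cos x)=\sin(2^n x)/\sin x$. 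This number is strictly positive, so $F(T_{2^n-1})$ is a nonzero object of $\C_{2n}$, and Lemma \ref{faith} concludes the argument. Note that this is consistent with the fact (used to construct $F$) that $\FPdim X_n$ is a root of $Q_{n+1}$ but not of $Q_n$, since the roots of $Q_{n+1}$ are $2\cos(j\pi/2^{n+1})$ for $1\le j\le 2^{n+1}-1$, and only those with $j$ even are roots of $Q_n$, whereas here $j=1$.

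The only mild obstacle is conceptual rather than computational: one must be careful that the character-level identity $[T_{2^n-1}]=Q_n([T_1])$ in the split Grothendieck ring of ${\rm Tilt}(SL(2,\k))$ lifts to an actual direct-sum isomorphism of objects before $F$ is applied, since ${\mathcal T}_{n+1,2}$ is only Karoubian (not abelian) and a monoidal functor need not preserve relations that hold only after passing to composition factors in the target. But the Krull--Schmidt argument invoked in the proof of Corollary \ref{uni1} delivers precisely this splitting; past that step the verification is a one-line trigonometric calculation.
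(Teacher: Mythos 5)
Your proof is correct and follows the paper's strategy: apply Lemma \ref{faith}, reducing to the non-vanishing $F({\rm St}_n)\ne 0$, and both you and the paper arrive at the identity $[F({\rm St}_n)]=Q_n(X_n)$ in ${\rm Gr}(\C_{2n})$ (via the split isomorphism $T_{2^n-1}\oplus Q_n^-(T_1)\cong Q_n^+(T_1)$ established in Corollary \ref{uni1}). The final non-vanishing step differs in technique though not in spirit: the paper observes that $Q_n$ has degree $2^n-1<2^n$ while multiplication by $X_n$ on ${\rm Gr}(\C_{2n})\cong\mO_n$ has $2^n$ distinct eigenvalues (Galois conjugates), so $Q_n(X_n)\ne 0$; you instead apply the ring homomorphism $\FPdim$ and compute directly that $Q_n(2\cos(\pi/2^{n+1}))=\sin(\pi/2)/\sin(\pi/2^{n+1})=1/\sin(\pi/2^{n+1})>0$. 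Your route is a little more explicit and needs only the formula for $Q_n$ and the value of $\FPdim X_n$, whereas the paper's gives a cleaner structural reason (degree versus number of eigenvalues) without any trigonometry. Your caution that the Grothendieck-ring identity $[{\rm St}_n]=Q_n([T_1])$ must be lifted to an actual direct-sum decomposition in the Karoubian category before applying $F$ is well placed and is exactly what the Krull--Schmidt argument in Corollary \ref{uni1} supplies.
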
 

\begin{proof} By Lemma \ref{faith}, it suffices to show that $F({\rm St}_n)\ne 0$. 
But $F({\rm St}_n)=Q_n(T_1)$, and any polynomial 
$Q$ of degree less than $2^n$ such that $Q(X_n)=0$ is identically zero (as $X_n$ has $2^n$ distinct eigenvalues). 
This implies the statement. 
\end{proof}

\begin{proposition}\label{fullf}
The functor $F$ is full. 
\end{proposition}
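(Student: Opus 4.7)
The strategy is to deduce fullness from faithfulness (Proposition \ref{faith1}) by a dimension comparison. Faithfulness gives injective maps $\Hom_{\mathcal{T}_{n+1,2}}(A,B) \hookrightarrow \Hom_{\C_{2n}}(F(A),F(B))$, and fullness amounts to these being equalities. Using rigidity of both categories, self-duality of $T_1$ and $X_n$, and additivity in each argument, it suffices to check
\[
\dim\Hom_{\mathcal{T}_{n+1,2}}(\one, T_1^{\otimes N}) = \dim\Hom_{\C_{2n}}(\one, X_n^{\otimes N})
\]
for all $N\ge 0$. For $N$ odd both sides vanish: on the tilting side, every indecomposable summand of $T_1^{\otimes N}$ is a $T_i$ with $i$ odd, whose weights all have parity $i$ and hence miss zero, so $\Hom(\one, T_i)=0$; on the $\C_{2n}$ side the analogous vanishing was observed at the start of the proof of Proposition \ref{inva}.

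For $N = 2m$ the right-hand side is $d_{mn}$, with generating function $f_n(z)$ from Proposition \ref{inva}. In characteristic $2$ one has $T_1\otimes T_1 = T_2$ (the $T_0=\one$ summand of the characteristic-zero Clebsch--Gordan decomposition is absent in characteristic $2$), so $T_1^{\otimes 2m} \cong T_2^{\otimes m}$ in $\mathcal{T}_{n+1,2}$, and the left-hand side reduces to $\dim\Hom_{\mathcal{T}_{n+1,2}}(\one, T_2^{\otimes m})$. By part (ix) of the main theorem, $F(T_2) = X_n\otimes X_n = A_{n-1}$, so the faithfulness of $F$ immediately gives the upper bound $\dim\Hom_{\mathcal{T}_{n+1,2}}(\one, T_2^{\otimes m}) \le d_{mn}$.

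To complete the proof I would establish the matching lower bound by induction on $n$. The base $n=0$ is vacuous. For the inductive step, the inductive hypothesis provides a fully faithful embedding $F_{n-1}\colon \mathcal{T}_{n,2}\hookrightarrow \C_{2n-2}$ identifying $\mathcal{T}_{n,2}$ with the Karoubian tensor subcategory of $\C_{2n-2}$ generated by $X_{n-1}$. Combined with $F_n(T_2) = A_{n-1}$, this allows one to transport to $\mathcal{T}_{n+1,2}$ the Frobenius reciprocity
\[
\dim\Hom_{\C_{2n-1}}(\one, A_{n-1}^{\otimes m}) = \dim\Hom_{\C_{2n-2}}(\one, A_{n-1}^{\otimes m-1})
\]
used in the proof of Proposition \ref{inva}, yielding the same recursion $f_n(z) = 1 + \frac{z}{1-2z} f_{n-1}\bigl(\frac{z^2}{(1-2z)^2}\bigr)$ for the tilting-side generating function and hence the desired dimension equality.

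The main obstacle is that this Frobenius reciprocity uses the Hopf algebra structure of $A_{n-1}$, which is explicit on the $\C_{2n}$ side but not intrinsically present on the tilting side. Bridging this gap requires the faithful $F_n$ together with the inductive fullness of $F_{n-1}$ to sandwich $\dim\Hom_{\mathcal{T}_{n+1,2}}(\one, T_2^{\otimes m})$ between matching bounds.
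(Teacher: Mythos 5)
Your reduction to comparing invariant dimensions matches the paper's, and the easy half is fine: faithfulness (Proposition \ref{faith1}) gives an injection of Hom spaces, hence $\dim\Hom_{\mathcal{T}_{n+1,2}}(\one,T_1^{\otimes 2m})\le d_{mn}$, and fullness is exactly the reverse inequality. But that reverse inequality is the entire content of the proposition, and it is precisely the part you leave unproved; the inductive scheme you sketch does not work as stated. First, $\mathcal{T}_{n,2}$ is a \emph{quotient} of $\mathcal{T}_{n+1,2}$ (by the larger ideal $\mathcal{I}_n\supset\mathcal{I}_{n+1}$), not a subcategory, so the inductive fullness of the embedding $\mathcal{T}_{n,2}\hookrightarrow\C_{2n-2}$ only controls Hom spaces after passing to a further quotient; the bound it yields is $\dim\Hom_{\mathcal{T}_{n+1,2}}(\one,T_1^{\otimes 2m})\ge \dim\Hom_{\mathcal{T}_{n,2}}(\one,T_1^{\otimes 2m})=d_{m,n-1}$, which is strictly weaker than the needed $\ge d_{mn}$. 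Second, the Frobenius reciprocity step in Proposition \ref{inva} trades one tensor factor of $A_{n-1}$ for the forgetful functor $\C_{2n-1}\to\C_{2n-2}$ and then uses the splitting $A_{n-1}\cong 2\cdot\one\oplus X_{n-1}$ in $\C_{2n-2}$; on the tilting side $T_2=T_1^{\otimes 2}$ is indecomposable (of shape $[0,2,0]$) and there is no analogue of the forgetful functor or of that splitting, so there is nothing to ``transport''. You name this obstacle yourself, but offer no mechanism that overcomes it, so the recursion for the tilting-side generating function is not established.

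What is missing is an independent computation of $D_{mn}:=\dim\Hom_{\mathcal{T}_{n+1,2}}(\one,T_1^{\otimes 2m})$, and this is how the paper closes the argument: since $\Ext^1$ between tilting modules for $SL(2,\k)$ vanishes, these Hom dimensions agree with those between the lifts $\widetilde T_i$ in the characteristic-zero quantum group category $\Ver$ over $K$, where $\dim\Hom(\one,\widetilde T_1^{\otimes 2m})$ is the number of length-$2m$ paths on the type $A_{2^{n+1}-1}$ Dynkin diagram beginning and ending at the left-hand node. Elementary combinatorics identifies the resulting generating function with the $f_n(z)$ of Proposition \ref{inva}, so $D_{mn}=d_{mn}$ and fullness follows from faithfulness. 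Some such direct count of the tilting-side invariants (equivalently, Temperley--Lieb/Jones--Wenzl combinatorics in the quotient category) is the key step your proposal lacks.
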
 

\begin{proof} It suffices to check that $F$ is full on objects of the form 
$T_1^{\otimes r}$. Thus, since $F$ is faithful, it suffices to check 
that 
$$
\dim \Hom(\one,T_1^{\otimes 2m})=d_{mn}. 
$$

Let $D_{mn}:=\dim \Hom(\one,T_1^{\otimes 2m})$. 
Let $\widetilde T_i$ be the lift of $T_i$ to the category ${\rm Ver}_{2^n}(K)$. 
Then, since $\Ext^1_{SL(2,\k)}(T_i,T_j)=0$, we have 
$\dim \Hom(T_i,T_j)=\dim\Hom(\widetilde T_i,\widetilde T_j)$. 
Hence
$$
D_{mn}=\dim \Hom(\one,\widetilde T_1^{\otimes 2m}).
$$
But $\dim \Hom(\one,\widetilde T_1^{\otimes 2m})$ just equals the number of paths 
on the Dynkin diagram of type $A_{2^{n+1}-1}$ of length $2m$ beginning and ending at the left end.
Hence, it follows from elementary combinatorics that 
$$
\sum_{m\ge 0}D_{mn}z^m=\frac{(t+t^{-1})(t^{2^{n+1}-1}-t^{-2^{n+1}+1})}{t^{2^{n+1}}-t^{-2^{n+1}}},
$$
where $z=(t+t^{-1})^{-2}$. 
Thus, the Proposition follows from Proposition \ref{inva}. 
\end{proof} 

Propositions \ref{faith1} and \ref{fullf} show that $F$ is fully faithful. 

Finally, note that the object $X_n$ generates $\C_{2n}$. Indeed, some tensor power of $X_n$ 
contains $F({\rm St}_n)=X_{\lbrace1,\dots,n\rbrace}$ as a subquotient. But $X_{\lbrace1,\dots,n\rbrace}$ is projective, and any indecomposable projective is a direct summand in $X_n^{\otimes m}\otimes X_{\lbrace1,\dots,n\rbrace}$ for some $m$, as desired. 

This completes the proof of part (i) of Theorem \ref{maint}. Thus, Theorem \ref{maint} is proved.  

\subsection{Remarks} 

\begin{remark} Observe that all indecomposable objects in $\C_2$ are direct summands of tensor powers of $X_1$, 
which implies that $\mathcal{T}_{2,2}=\C_2$. Hence $\mathcal{T}_{2,2}$ is an abelian category, as was shown by V. Ostrik. 
\end{remark} 

\begin{remark}\label{allsym} We claim that any braiding on $\C_{2n}$ is necessarily symmetric. Indeed, let $X:=X_n$. Since $\C_{2n}$ is tensor generated by $X$, it suffices to show that if $c_{XX}: X\otimes X\to X\otimes X$ is the braiding map then $c_{XX}^2={\rm Id}$. Since $X\otimes X$ is the regular $A_{n-1}$-module, 
we have $\End(X\otimes X)=\k[t]/(t^2)$. Thus $c_{XX}=\alpha+\beta t$, where $\alpha,\beta\in \k$. Thus $c_{XX}^2=\alpha^2$ is a scalar. This means that $c_{X\otimes X,X}^2$ is also a scalar, namely 
$\alpha^4$. But $X\otimes X$ contains $\one$, hence $\alpha^4=1$. This implies that $\alpha=1$, 
hence $c_{XX}^2={\rm Id}$, as desired. 

This gives another way to see that the braiding on $\C_{2n+2}$ constructed in the proof of Theorem \ref{maint} is automatically symmetric. 
\end{remark} 

\begin{remark} Note that we had to use a nondegenerate $R$-matrix on $A_n$; otherwise (i.e., had we used the trivial one) the bimodule category $\M_n$ would not have been invertible. Moreover, it is easy to see that such a nondegenerate $R$-matrix is unique up to an isomorphism, i.e., there are no choices involved in the construction of $\C_i$. Together with Remark \ref{allsym}, this implies that $\C_{2n}$ 
has a unique braiding up to equivalence, which is its symmetric structure. 
\end{remark} 

\begin{remark} Since the Frobenius twist is a monoidal functor, we have $X_S^{(1)}=0$ if $1\in S$ and $X_S^{(1)}=X_{S-1}$ if 
$1\notin S$, where $S-1$ is the image of $S$ under the map $i\mapsto i-1$. 
\end{remark} 

\begin{remark} The category $\C_{2n}$ is filtered by full abelian subcategories $\C_{2n}^r$, $r\ge 0$, which consist of subquotients of direct sums of $X_n^{\otimes j}$, $j\le r$. These categories are not closed under tensor product, but we have partial tensor products $\C_{2n}^j\times \C_{2n}^{r-j}\to \C_{2n}^r$ with associativity isomorphisms satisfying the pentagon relation. Moreover, we claim that the Frobenius twist functor $X\mapsto X^{(1)}$ is a monoidal functor $F\colon \C_{2n}\to \C_{2n-2}$. The proof is by induction in $n$. The base case $n=1$ claiming that $F(\C_2)\subset \C_0=\Vec$ follows from the fact that $F(X_1)=F(P_\one)=0$. 
To prove the inductive step, assume that $F(\C_{2n})\subset \C_{2n-2}$ and let us show that $F(\C_{2n+2})\subset \C_{2n}$. First note that $F(\C_{2n+1})\subset \C_{2n-1}$, since this is true for simple objects, and the composition series of $F({\rm gr}Y)$ dominates the composition series of $F(Y)$; so the statement follows from Proposition \ref{Serre}. Since every indecomposable object 
of $\C_{2n+2}$ which is not in $\C_{2n+1}$ is of the form $X_{n+1}\otimes Y$ for $Y\in \C_{2n}$, and $F(X_{n+1})=X_n$, the statement follows from the induction assumption.  

Unfortunately, the functor $F$ is not a tensor functor, as it is not 
left or right exact. However, we expect that it is exact in any fixed degree $r$ of the filtration and moreover is an equivalence $\C_{2n}^r\to \C_{2n-2}^r$ preserving partial tensor products (i.e., a partial tensor functor) if $n$ is sufficiently large compared to $r$. If so, we can define the limit $\C(\infty)=\lim_{n\to \infty}\C_{2n}:=\ccup_{r\ge 0}\lim_{n\to \infty}\C_{2n}^r$, where $\C_{2n-2}^r$ is identified with $\C_{2n}^r$ for large $n$ by means of the Frobenius twist functor $F$. We expect that $\C(\infty)\cong \Rep SL(2,\k)$.\footnote{This was recently proved in \cite{EHS} after this paper was published.} This is similar to the construction of the abelian envelope of the Deligne category ${\rm Rep}^{\rm ab}(GL_t)$ out of representation categories of the supergroups $GL(n+t|n)$ using the Duflo-Serganova homology functor ${\rm DS}$ in place of the Frobenius functor, \cite{EHS}. 
\end{remark} 

\begin{remark} By Theorem 2.4 of \cite{O1}, in Proposition \ref{uni} the object $X$ has to be self-dual. 
Let us give a direct proof of this fact. Namely, we have 

\begin{proposition}\label{selfdu}
Let $X$ be an object of a symmetric tensor category $\D$ over a field $\k$ of any characteristic such that $\wedge^2X\cong \one$. Then the isomorphism $\one\to \wedge^2X\subset X\otimes X$ defines an isomorphism $X^*\to X$. 
\end{proposition}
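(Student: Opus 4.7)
The map $\tau\colon \one\xrightarrow{\sim}\wedge^2 X\hookrightarrow X\otimes X$ corresponds via the rigidity adjunction $\Hom(\one,X\otimes X)\cong \Hom(X^*,X)$ to a morphism $\phi\colon X^*\to X$. My plan is to prove $\phi$ is an isomorphism by establishing (a) surjectivity and (b) injectivity, with (b) deduced from (a) through a self-duality argument.

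For (b) assuming (a): under the identification $X^{**}\cong X$, the dual morphism $\phi^*\colon X^*\to X$ corresponds via the same adjunction to $s\circ\tau$, where $s$ is the symmetric braiding on $X\otimes X$. Because $\tau$ factors through $\wedge^2 X$, which is contained in $\ker(1+s)$ (Lemma \ref{frob} in characteristic $2$; literally the $(-1)$-eigenspace of $s$ otherwise), one obtains $s\circ\tau=\pm\tau$, and hence $\phi^*=\pm\phi$. Since $\ker\phi^*\cong(\mathrm{coker}\,\phi)^*$ in a rigid tensor category, surjectivity of $\phi$ forces $\ker\phi=0$.

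For (a), let $I:=\mathrm{Im}\,\phi$, which is nonzero because $\tau\ne 0$. Suppose for contradiction that $Q:=X/I\ne 0$. The factorization of $\phi$ through $I$, combined with the adjunction, forces $\tau$ to lie in $X\otimes I$; the relation $\phi^*=\pm\phi$ then yields $\tau\in I\otimes X$ as well, so $\tau\in (X\otimes I)\cap(I\otimes X)=I\otimes I$. The filtration $I\otimes I\subset I\otimes X+X\otimes I\subset X\otimes X$ has successive quotients $I\otimes I$, $I\otimes Q\oplus Q\otimes I$, $Q\otimes Q$, and applying $1+s$ produces a compatible filtration on $\wedge^2 X$ whose successive quotients in any characteristic are $\wedge^2 I$, $I\otimes Q$ (the image of $1+s$ on the middle graded piece, where $s$ swaps the two summands, is isomorphic to $I\otimes Q$), and $\wedge^2 Q$. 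Since $\wedge^2 X\cong\one$ has length $1$ and $I\otimes Q\ne 0$ in a tensor category, we must have $\mathrm{length}(I\otimes Q)=1$ and $\wedge^2 I=\wedge^2 Q=0$, so by Lemma \ref{wedge2} both $I$ and $Q$ are invertible.

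Finally, a short snake-lemma argument applied to $1+s$ on the inclusion $I\otimes I\hookrightarrow X\otimes X$ identifies $\wedge^2 X\cap(I\otimes I)=\wedge^2 I$: any $v\in X\otimes X$ with $(1+s)v\in I\otimes I$ differs from some element of $I\otimes I$ by an element of $\ker(1+s)$, so $(1+s)v\in(1+s)(I\otimes I)=\wedge^2 I$. Combined with $\wedge^2 I=0$, this forces $\tau$, which factors through both $\wedge^2 X$ and $I\otimes I$, to vanish, contradicting $\tau\ne 0$. Hence $\phi$ is surjective, and by the self-duality argument, an isomorphism. The main obstacle is the characteristic-$2$ verification, since there $\wedge^2$ is defined as an image of $1+s$ rather than a direct summand, so both the identification of the graded pieces of $\wedge^2 X$ and the identification of $\wedge^2 X\cap(I\otimes I)$ with $\wedge^2 I$ require the small snake-lemma observations just noted.
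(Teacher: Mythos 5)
Your overall strategy (reduce to surjectivity of $\phi\colon X^*\to X$ via $\phi^*=\pm\phi$, then run a length count on the filtration of $X\otimes X$ induced by $I=\mathrm{Im}\,\phi$) is viable and genuinely different from the paper's argument, which takes an arbitrary subobject $Y\subset X$ and deduces from the length of $\wedge^2X$ that $Y\otimes(X/Y)\cong\one$. Most of your steps are fine: the adjunction bookkeeping, $(X\otimes I)\cap(I\otimes X)=I\otimes I$ by biexactness, and the conclusion $\wedge^2I=\wedge^2Q=0$, $\mathrm{length}(I\otimes Q)=1$ all hold (for the last one you only need that the graded pieces of the induced filtration on $\wedge^2X$ \emph{contain} $\wedge^2I$, the image of $1+s$ on $I\otimes Q\oplus Q\otimes I$, and $\wedge^2Q$, which is true; your stronger claim that they \emph{equal} these is not).

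The genuine gap is the last step. The identity $\wedge^2X\cap(I\otimes I)=\wedge^2I$ is not a consequence of any snake-lemma formality: your justifying assertion, that $(1+s)v\in I\otimes I$ forces $v\in I\otimes I+\ker(1+s)$, is exactly a restatement of the identity to be proved, and as a general statement about a subobject $I\subset X$ in a symmetric tensor category of characteristic $2$ it is false. For a counterexample take $\D=\C_1$, let $X$ be the projective cover of $\one$ (basis $x,y$ with $dx=0$, $dy=x$) and $I=\langle x\rangle$; then $(1+s)(y\otimes y)=dy\otimes dy=x\otimes x$, so $\mathrm{Im}(1+s)\cap(I\otimes I)=I\otimes I\neq 0=\wedge^2I$, and $y\otimes y\notin I\otimes I+\ker(1+s)$. (The assertion is harmless in ordinary representation categories, where the braiding is the plain swap and one can split $X=I\oplus Q$ as vector spaces, but that is precisely what fails in the exotic characteristic-$2$ categories this proposition is about; in characteristic $\neq 2$ your step is not even needed, since $s$ acts by $+1$ on $I\otimes I$ once $\wedge^2I=0$ while $s\tau=-\tau$.) The good news is that the repair stays inside your argument and avoids the false identity altogether: since the middle graded piece of the induced filtration on $\wedge^2X$ contains the image of $1+s$ on $I\otimes Q\oplus Q\otimes I$, which is isomorphic to $I\otimes Q\neq 0$, and $\wedge^2X\cong\one$ has length one, the bottom piece $\wedge^2X\cap(I\otimes I)$ must vanish; this already contradicts $0\neq\tau\in\wedge^2X\cap(I\otimes I)$, giving surjectivity of $\phi$ without ever identifying that intersection with $\wedge^2I$.
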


\begin{proof} If $X$ is simple, there is nothing to prove. Otherwise, let $Y\subset X$ be a nonzero subobject such that $Z\colon =X/Y$ is nonzero. Then the composition series of $\wedge^2X$ contains the union of composition series of $\wedge^2 Y$, $Y\otimes Z$ and $\wedge^2Z$. Since $Y\otimes Z\ne 0$, we have 
$Y\otimes Z=\one$. Thus $Y$ is invertible and $Z\cong Y^*$. Now it is easy to see that the map $\gamma: X^*\to X$ corresponding to the isomorphism $\one\to \wedge^2X$ is an isomorphism, as claimed. 
\end{proof} 
\end{remark} 

\section{Further properties of the categories $\C_n$.}

\subsection{The matrix of multiplication by $X_n$ on ${\rm Gr}(\C_{2n})$}

\begin{proposition} Let $B_n\in {\rm Mat}_{2^n}(\ZZ_{\ge 0})$ 
be the matrix of multiplication by $X_n$ on the Grothendieck ring ${\rm Gr}(\C_{2n})$ (in the basis of simple objects). 
Then the matrices $B_n$ are computed recursively as  follows: $B_0=0$ and 
$$
B_{n+1}=\left(\begin{matrix} 0 & 2+B_n\\ 1& 0\end{matrix}\right),
$$
where the blocks are of size $2^{n-1}$. This matrix has distinct eigenvalues 
$\zeta_n^{2r+1}+\zeta_n^{-2r-1}$, $r=1,\dots,2^n$, where 
$\zeta_n=\exp(\pi i/2^{n+1})$. 
\end{proposition}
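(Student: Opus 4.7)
The plan is to verify the recursion and the eigenvalue list as two separate claims, each leveraging earlier parts of Theorem \ref{maint}.

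For the recursion, I will use part (ix) to split the simple objects of $\C_{2n+2}$ according to whether $n+1\in S$: those with $n+1\notin S$ are the $2^n$ simple objects inherited from $\C_{2n}$, while those with $n+1\in S$ take the form $X_{n+1}\otimes X_{S'}$ for $S'\subset[1,n]$. Ordering the basis of ${\rm Gr}(\C_{2n+2})$ so that the inherited simples come first and the $X_{n+1}$-multiples come second, the action of $X_{n+1}$ sends $X_{S'}$ to $X_{n+1}\otimes X_{S'}$ (an identity block from the first group to the second), and sends $X_{n+1}\otimes X_{S'}$ to $X_{n+1}^{2}\cdot X_{S'}=(2\cdot\one+X_n)\cdot X_{S'}$ by part (vi), yielding the block $2I+B_n$ from the second group back to the first. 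This is exactly the stated block form.

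For the eigenvalues, the cleanest route is through Corollary \ref{grring}: the isomorphism ${\rm Gr}(\C_{2n})\cong\mO_n=\ZZ[2\cos(\pi/2^{n+1})]$ tensored with $\CC$ diagonalizes $B_n$, with spectrum the images of $X_n$ under the $2^n$ embeddings $\mO_n\hookrightarrow\CC$, i.e.\ the Galois conjugates $\zeta_n^{a}+\zeta_n^{-a}$ of $2\cos(\pi/2^{n+1})$, where $a$ runs over representatives of the odd residues modulo $2^{n+2}$ up to sign. These are the $2^n$ distinct real numbers $2\cos((2k+1)\pi/2^{n+1})$, matching the list in the statement under the appropriate choice of representatives. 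As a cross-check compatible with the recursion itself: if $v$ is an eigenvector of $B_n$ with eigenvalue $\lambda$, then the vector with first block $\mu v$ and second block $v$ is an eigenvector of $B_{n+1}$ with eigenvalue $\mu$ precisely when $\mu^{2}=2+\lambda$, and the half-angle identity $2+2\cos\theta=4\cos^{2}(\theta/2)$ propagates the cosine form from the base case $B_0=(0)$.

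The only technical point is distinctness, which is needed in order to conclude that $B_n$ is diagonalizable with the full list as its spectrum. From the Galois viewpoint this is automatic, since $(2k+1)\pi/2^{n+1}$ takes $2^n$ distinct values in $(0,\pi)$ and cosine is injective there. From the inductive viewpoint one must additionally check that $-2$ never occurs as an eigenvalue (so that $\sqrt{2+\lambda}\neq 0$), which again follows because $(2k+1)\pi/2^{n+1}\neq\pi$. I expect no substantive obstacle: the proof reduces to unwinding parts (ix) and (vi) for the block form and invoking Corollary \ref{grring} for the spectrum.
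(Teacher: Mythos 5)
Your proposal is correct and follows essentially the same route as the paper: the block form is exactly the fusion computation $X_{n+1}\otimes X_{S'}=X_{S'\cup\{n+1\}}$ versus $X_{n+1}^{2}\otimes X_{S'}=(2\cdot\one+X_n)\otimes X_{S'}$, and the spectrum is obtained from the Galois conjugates of $\FPdim(X_n)=2\cos(\pi/2^{n+1})$, which is precisely the paper's appeal to the Galois action via Corollary \ref{grring}. Your reading of the eigenvalue list as the $2^n$ distinct numbers $2\cos((2k+1)\pi/2^{n+1})$ is the intended one, and the inductive cross-check is a harmless extra.
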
 

\begin{proof} We have $X_nX_S=X_{S\cup n}$ if $n\notin S$ and 
$$
X_nX_S=X_n^2X_{S\setminus n}=(2+X_{n-1})X_{S\setminus n}.
$$ 
This implies the first statement. Also $B_n$ has an eigenvalue $\FPdim(X_n)=2\cos(\pi/2^{n+1})$, 
so the second statement follows from the Galois group action. 
\end{proof} 

\subsection{The Cartan matrix of $\C_n$}

Let $C_n$ be the Cartan matrix of $\C_n$. 
Then by Theorem \ref{maint}(ix), 
$$
C_{2n}=C_{2n-1}\oplus C_{2n-3}\oplus\cdots\oplus C_1\oplus  C_0,
$$
with $C_0=1$, so it suffices to determine $C_n$ for odd $n$. 

\begin{proposition} 
The matrices $C_{2n+1}$ are determined from the recursion
$$
C_{2n+1}=\left(\begin{matrix} 2C_{2n-1} & C_{2n-1}\\ C_{2n-1}& 2C_{2n-2}\end{matrix}\right)
$$
with $C_1=2$. In particular, $C_{2n+1}$ is symmetric and all its nonzero entries are powers of $2$. 
\end{proposition}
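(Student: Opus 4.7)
The plan is to combine the projective cover formula $P^{2n+1}_{X_S}=A_n\otimes P^{2n}_{X_S}$ from part (x) with Frobenius reciprocity for the free/forgetful adjunction between $\C_{2n+1}$ and $\C_{2n}$. This gives
\begin{align*}
C^{2n+1}_{ST} &= \dim\Hom_{\C_{2n+1}}(A_n\otimes P^{2n}_{X_T},\ A_n\otimes P^{2n}_{X_S}) \\
&= \dim\Hom_{\C_{2n}}(P^{2n}_{X_T},\ A_n\otimes P^{2n}_{X_S}) \\
&= 2\,C^{2n}_{ST}+[X_n\otimes P^{2n}_{X_S}:X_T],
\end{align*}
using $A_n=\one\oplus X_n\oplus\one$ as an object and the identity $\dim\Hom(P^{2n}_{X_T},M)=[M:X_T]$.

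I would then analyze four blocks after ordering simples so that those with $n\notin S$ come first, exploiting the block decomposition $\C_{2n}=\C_{2n-1}\oplus \M_{n-1}$ and the abelian equivalence $\M_{n-1}\cong\C_{2n-2}$ from (ix). For $n\notin S,T$, both projectives lie in $\C_{2n-1}$ while $X_n\otimes P^{2n}_{X_S}\in\M_{n-1}$, so the correction vanishes and $C^{2n+1}_{ST}=2C^{2n-1}_{ST}$. For $n\in S,T$, both projectives lie in $\M_{n-1}$, where they are identified with the $\C_{2n-2}$-projectives under the equivalence, while $X_n\otimes P^{2n}_{X_S}\in\C_{2n-1}$; this yields $C^{2n+1}_{ST}=2C^{2n-2}_{S'T'}$ with $S=S'\cup n$, $T=T'\cup n$.

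The crucial identification is in the off-diagonal block. For $n\notin T$ and $S=S'\cup n$, the $2C^{2n}$ term vanishes (different blocks) and one is reduced to computing $[X_n\otimes P^{2n}_{X_{S'\cup n}}:X_T]$. I would prove
\[
X_n\otimes P^{2n}_{X_{S'\cup n}}\ \cong\ P^{2n-1}_{X_{S'}}
\]
as objects of $\C_{2n-1}$: the left-hand side is projective (tensoring with the rigid self-dual object $X_n$ preserves projectives), lies in $\C_{2n-1}$ (being a product of two objects of $\M_{n-1}$), and has simple head $X_{S'}$. The head is computed via
\[
\dim\Hom(X_n\otimes P^{2n}_{X_{S'\cup n}},X_U)=[X_n\otimes X_U:X_{S'\cup n}]=\delta_{U,S'},
\]
for $U\subset\{1,\dots,n-1\}$, using self-duality of $X_n$ together with the fusion rule $X_n\cdot X_U=X_{U\cup n}$ from Corollary \ref{fusionrules} (valid since $n\notin U$). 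This forces the claimed isomorphism, so the off-diagonal block entry is $C^{2n-1}_{TS'}$, as required.

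The base case $C_1=(2)$ is immediate from (viii): $A_0=\wedge\one$ is a two-dimensional local Hopf algebra in $\Vec_\k$ with unique simple $\one$ and projective cover $A_0$ of length $2$. Symmetry of $C_{2n+1}$ follows inductively from the symmetry of $C_{2n-1}$ and $C_{2n-2}$ together with the matching off-diagonal blocks, and every entry of $C_{2n+1}$ is either a direct copy of, or twice, an entry of $C_{2n-1}$ or $C_{2n-2}$, so all nonzero entries are powers of $2$. The main obstacle in the argument is the head computation in the off-diagonal block; once it is in hand via self-duality of $X_n$ and the explicit fusion rule, everything else is bookkeeping with the block decomposition.
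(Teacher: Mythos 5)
Your proof is correct and follows essentially the same strategy as the paper: apply the formula $P_{2n+1}(X_S)=A_n\otimes P_{2n}(X_S)$ from (x), use Frobenius reciprocity for the adjunction between $A_n\otimes(-)$ and the forgetful functor, expand $A_n=\one\oplus X_n\oplus\one$, and then sort terms by the block decomposition $\C_{2n}=\C_{2n-1}\oplus\M_{n-1}\cong\C_{2n-1}\oplus\C_{2n-2}$ from (ix). The one place you deviate is the treatment of the off-diagonal block: where the paper directly invokes the abelian equivalence $\M_{n-1}\cong\C_{2n-2}$ to write $P_{2n}(X_n\otimes Y_j)=X_n\otimes P_{2n-2}(Y_j)$, you instead identify $X_n\otimes P_{2n}(X_{S'\cup n})$ with $P_{2n-1}(X_{S'})$ by noting it is a projective object in the block $\C_{2n-1}$ with simple head, computed via adjunction, self-duality of $X_n$, and the fusion rule $X_n\cdot X_U=X_{U\cup n}$ for $n\notin U$. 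This is a perfectly valid alternative and is arguably a little more self-contained, since it avoids unwinding the details of the equivalence in (ix); conversely, the paper's route is shorter once (ix) and (x) are taken as black boxes, because $X_n\otimes P_{2n}(X_n\otimes Y_j)=X_n\otimes X_n\otimes P_{2n-2}(Y_j)=A_{n-1}\otimes P_{2n-2}(Y_j)=P_{2n-1}(Y_j)$ recovers your key isomorphism in one line. One further cosmetic point: the paper deduces symmetry of $C_{2n+1}$ directly from self-duality of simples (and hence of projectives), whereas you deduce it inductively from the recursion; both are fine.
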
 

\begin{proof} First of all, $C_n$ is symmetric for all $n$ since the distinguished invertible object 
of $\C_n$ is $\one$ (the only invertible object of $\C_n$), hence all the projective objects 
of $\C_n$ (which are also injective) are self-dual, since so are all the simple objects (see \cite{EGNO}, Subsection 6.4). 

Let $Y_j$ be the simple objects of $\C_{2n-2}$ (and $\C_{2n-1}$). Then $P_{2n-1}(Y_j)=P_{2n}(Y_j)$ and 
$P_{2n+1}(Y_j)=A_n\otimes P_{2n-1}(Y_j)$. Hence 
$$
\Hom_{\C_{2n+1}}(P_{2n+1}(Y_j),P_{2n+1}(Y_r))=
\Hom_{\C_{2n+1}}(A_n\otimes P_{2n-1}(Y_j),A_n\otimes P_{2n-1}(Y_r))=
$$
$$
\Hom_{\C_{2n}}(P_{2n-1}(Y_j),A_n\otimes P_{2n-1}(Y_r))=
$$
$$
\Hom_{\C_{2n}}(P_{2n-1}(Y_j),P_{2n-1}(Y_r))^{\oplus 2}\oplus \Hom_{\C_{2n}}(P_{2n-1}(Y_j),X_n\otimes P_{2n-1}(Y_r))=
$$
$$
\Hom_{\C_{2n}}(P_{2n-1}(Y_j),P_{2n-1}(Y_r))^{\oplus 2}=
$$
$$
\Hom_{\C_{2n-1}}(P_{2n-1}(Y_j),P_{2n-1}(Y_r))^{\oplus 2},
$$
where we use that $A_n=\one\oplus X_n\oplus \one$. This implies that the left upper block of $C_{2n+1}$ is $2C_{2n-1}$. 
Similarly, 
$$
\Hom_{\C_{2n+1}}(P_{2n+1}(X_n\otimes Y_j),P_{2n+1}(Y_r))=
$$
$$
\Hom_{\C_{2n+1}}(A_n\otimes P_{2n-1}(X_n\otimes Y_j),A_n\otimes P_{2n-1}(Y_r))=
$$
$$
\Hom_{\C_{2n}}(P_{2n-1}(X_n\otimes Y_j),A_n\otimes P_{2n-1}(Y_r))=
$$
$$
\Hom_{\C_{2n}}(X_n\otimes P_{2n-2}(Y_j),A_n\otimes P_{2n-1}(Y_r))=
$$
$$
\Hom_{\C_{2n}}(X_n\otimes P_{2n-2}(Y_j),X_n\otimes P_{2n-1}(Y_r))=
$$
$$
\Hom_{\C_{2n-2}}(P_{2n-2}(Y_j),P_{2n-1}(Y_r))=
$$
$$
\Hom_{\C_{2n-1}}(P_{2n-1}(Y_j),P_{2n-1}(Y_r)),
$$
which yields that the upper right and the lower left blocks of $\C_{2n+1}$ are both $C_{2n-1}$. 
Finally,  
$$
\Hom_{\C_{2n+1}}(P_{2n+1}(X_n\otimes Y_j),P_{2n+1}(X_n\otimes Y_r))=
$$
$$
\Hom_{\C_{2n+1}}(A_n\otimes P_{2n-1}(X_n\otimes Y_j),A_n\otimes P_{2n-1}(X_n\otimes Y_r))=
$$
$$
\Hom_{\C_{2n}}(P_{2n-1}(X_n\otimes Y_j),A_n\otimes P_{2n-1}(X_n\otimes Y_r))=
$$
$$
\Hom_{\C_{2n}}(X_n\otimes P_{2n-2}(Y_j),A_n\otimes P_{2n-1}(X_n\otimes Y_r))=
$$
$$
\Hom_{\C_{2n}}(X_n\otimes P_{2n-2}(Y_j),P_{2n-1}(X_n\otimes Y_r))^{\oplus 2}=
$$
$$
\Hom_{\C_{2n}}(X_n\otimes P_{2n-2}(Y_j),X_n\otimes P_{2n-2}(Y_r))^{\oplus 2}=
$$
$$
\Hom_{\C_{2n-2}}(P_{2n-2}(Y_j),P_{2n-2}(Y_r))^{\oplus 2},
$$
which implies that the lower right block of $C_{2n-1}$ is $2C_{2n-2}$, as claimed. 
\end{proof} 

\subsection{The incompressibility of $\C_{2n}$}

\begin{definition} (V. Ostrik) A tensor category $\C$ is called incompressible if any 
tensor functor $F\colon \C\to \D$ from $\C$ to a tensor category $\D$ is injective (i.e., a fully faithful embedding). 
\end{definition} 

\begin{theorem} The category $\C_{2n}$ is incompressible. 
\end{theorem}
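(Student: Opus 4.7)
I would prove this by induction on $n$, the base case $n=0$ being immediate since $\C_0=\Vec_\k$ has only the unit as a simple object, so any tensor functor from it is fully faithful.

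For the inductive step, assume $\C_{2n-2}$ is incompressible and let $F\colon\C_{2n}\to \D$ be a tensor functor. By Corollary \ref{subcat}, $\C_{2n-2}\subset\C_{2n}$ is a tensor subcategory, so restricting $F$ yields a tensor functor $F|_{\C_{2n-2}}\colon\C_{2n-2}\to \D$ that is fully faithful by induction. I identify $\C_{2n-2}$ with its image in $\D$. Setting $X:=F(X_n)$, the identities $X_n\otimes X_n\cong A_{n-1}=\one\oplus X_{n-1}\oplus\one$, $\wedge^2 X_n\cong\one$, and $\wedge^3 X_n=0$ in $\C_{2n}$ transfer under $F$ to analogous statements for $X$ inside the symmetric tensor subcategory $F(\C_{2n})\subseteq \D$ (equipped with the symmetric structure inherited from $\C_{2n}$ via $F$).

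The main step is then to identify the abelian tensor subcategory of $\D$ generated by $F(\C_{2n})$ with $\C_{2n}$ itself. I would do this by running the two-step construction of $\C_{2n}$ from $\C_{2n-2}$ of Sections 3.1 and 3.4 internally to $\D$: first, $X\otimes X$ carries the structure of a Hopf algebra in $\C_{2n-2}\subset\D$ isomorphic to $A_{n-1}$ together with its nondegenerate triangular $R$-matrix, realizing a tensor subcategory of $\D$ equivalent to $\C_{2n-1}=A_{n-1}\text{-mod}_{\C_{2n-2}}$; second, $X$ itself supplies the invertible $\C_{2n-1}$-bimodule category giving the $\ZZ/2$-extension $\C_{2n}=\C_{2n-1}\oplus\M_{n-1}$. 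Uniqueness of this two-step construction---the $R$-matrix on $A_{n-1}$ is unique up to isomorphism by the Remark at the end of Section 3, and the $\ZZ/2$-extension is uniquely determined by the vanishing of the obstructions in $H^3,H^4,H^5$ computed in Section 3.4---forces the subcategory generated inside $\D$ to be tensor-equivalent to $\C_{2n}$ via $F$, so $F$ is fully faithful.

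The hardest part, I expect, is formalizing this ``internal reconstruction,'' particularly the uniqueness of the $\ZZ/2$-extension when the ambient $\D$ is an arbitrary (possibly non-symmetric) tensor category. A cleaner alternative I would pursue in parallel is to replace $\D$ by the tensor subcategory $\D_0\subseteq\D$ generated by the image of $F$, so that $F\colon\C_{2n}\to\D_0$ is surjective, and then show $\FPdim(\D_0)=\FPdim(\C_{2n})$, whence $F$ is an equivalence by the standard results on surjective tensor functors (see EGNO, Section 6.3). Since the $2^n$ values $\FPdim(X_S)$ are $\QQ$-linearly independent by Corollary \ref{grring}, the images $F(X_S)$ are pairwise non-isomorphic nonzero objects of $\D_0$ with prescribed Frobenius--Perron dimensions; combined with part (x) of Theorem \ref{maint} to control projective covers, this should force $\FPdim(\D_0)\ge\FPdim(\C_{2n})$ and close the argument.
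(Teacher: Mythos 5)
Your second approach (replace $\D$ by $\D_0={\rm Im}F$ and show $\FPdim(\D_0)=\FPdim(\C_{2n})$) is the one the paper actually takes, but the proposal has a genuine gap at the crucial step. Knowing that the $2^n$ Frobenius--Perron dimensions $\FPdim(X_S)$ are $\QQ$-linearly independent only tells you that the $F(X_S)$ are nonzero and pairwise non-isomorphic; it does \emph{not} tell you that they remain simple in $\D_0$. A priori, $F(X_S)$ could have several composition factors in $\D_0$, and then the indecomposable projectives of $\D_0$ could be strictly smaller than the $F(P_S)$, giving $\FPdim(\D_0)<\FPdim(\C_{2n})$. Invoking Theorem \ref{maint}(x) does not help, since that result describes projective covers in the \emph{source} category $\C_{2n+1}$, not the behavior of projectives under a surjective tensor functor. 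The bulk of the paper's proof is devoted to exactly this missing step: (a) showing $\D_0$ has no nontrivial invertible objects; (b) showing $V_n\otimes V_n$ is indecomposable with composition series $\one,X_{n-1},\one$, via the observation that otherwise $\D_0$ would contain an object with $\FPdim=\zeta_n^3+\zeta_n^{-3}$, which has a strictly larger Galois conjugate and hence cannot be a Frobenius--Perron dimension; (c) deducing ${\rm End}(V_{\{1,\dots,n\}})=\k$, so that this projective object is indecomposable, and then (using (a)) that its head and socle agree, so that it is simple; (d) concluding all $V_S$ are simple; and only then (e) the Frobenius--Perron eigenvector argument comparing $\FPdim(F(P_S))$ with the projective covers $\FPdim(Q_S)$ to pin down $\lambda=1$. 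Without (a)--(d) your estimate $\FPdim(\D_0)\ge\FPdim(\C_{2n})$ is unsupported.

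Your first ``internal reconstruction'' approach, as you yourself flag, founders because incompressibility quantifies over arbitrary (possibly non-braided) tensor functors to arbitrary tensor categories $\D$. There is no symmetric structure on $\D$ in which to run the Davydov--Nikshych extension theory or to make sense of ``the braided $\ZZ/2$-extension of $\C_{2n-1}$ inside $\D$,'' so the uniqueness statements from Section 3.4 are not available. The paper sidesteps this entirely by working purely with Frobenius--Perron dimensions, projectivity, and the absence of invertibles in $\D_0$.
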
 

\begin{proof} The proof is by induction in $n$. The base $n=0$ is clear, so we only need to do the inductive step. 
Let $F\colon \C_{2n}\to \D$ be a tensor functor. We need to show that $F$ is fully faithful. 
By replacing $\D$ with ${\rm Im}F$, we may assume that $\D$ is finite and $F$ is surjective. 
Also by the induction assumption we may assume that $\C_{2n-2}\subset \D$ and $F|_{\C_{2n-2}}={\rm Id}$. 

Let $V_S=F(X_S)$. By the induction assumption, $V_S=X_S$ is simple if $n\notin S$.
Also it is clear that $V_n$ is simple (as its FP dimension is $<2$). 

We claim that $\D$ does not contain nontrivial invertible objects. Indeed, if $\chi$ is an invertible object then 
$\chi$ must occur as a composition factor in $V_n^{\otimes r}$ for some $r$. 
This means that $V_n\otimes \chi$ occurs in $V_n^{\otimes r+1}$, 
which is impossible if $r$ is odd for FP dimension reasons, since $V_n^{\otimes r+1}$ is a 
power of $V_n\otimes V_n$, hence its composition factors are $X_S$, $n\notin S$. Thus 
$r$ is even, and $\chi\in \C_{2n-2}$, hence $\chi=\one$. 
 
Consider the tensor product $V_n\otimes V_n=F(A_{n-1})$. 
As $V_n$ is simple, we have 
$$
\Hom(V_n\otimes V_n,\one)=
\Hom(\one,V_n\otimes V_n)=\k. 
$$
Also, we claim that 
$$
\Hom(V_n\otimes V_n,X_{n-1})=
\Hom(X_{n-1},V_n\otimes V_n)=0. 
$$
Indeed, otherwise $V_n$ is a composition factor of $X_{n-1}\otimes V_n$, 
so we have an object $Y\in \D$ with dimension 
$$
\FPdim(Y)=\FPdim(X_n)\FPdim(X_{n-1})-\FPdim(X_n)=
$$
$$
=(\zeta_n+\zeta_n^{-1})(\zeta_n^2+\zeta_n^{-2})-(\zeta_n+\zeta_n^{-1})=\zeta_n^3+\zeta_n^{-3}.
$$
But this is impossible, since this number has a larger Galois conjugate (namely, $\zeta_n+\zeta_n^{-1}$), while the Frobenius-Perron dimension of an object must be the largest element in its Galois orbit.  
Thus, we see that $V_n\otimes V_n$ is indecomposable with composition series 
$\one,X_{n-1},\one$. 

Now, we claim that $\Hom(V_{\lbrace{1,\dots,n\rbrace}},V_{\lbrace{1,\dots,n\rbrace}})=\k$. 
Indeed, we can write this space as 
$$
\Hom(V_n\otimes V_n,X_{\lbrace{1,\dots,n-1\rbrace}}\otimes X_{\lbrace{1,\dots,n-1\rbrace}})=
$$
$$
\Hom(V_n\otimes V_n,X_{\lbrace{1,\dots,n-2\rbrace}}\otimes A_{n-2}\otimes X_{\lbrace{1,\dots,n-2\rbrace}}).
$$
Since $X_{n-1}$ does not occur in $X_{\lbrace{1,\dots,n-2\rbrace}}\otimes A_{n-2}\otimes X_{\lbrace{1,\dots,n-2\rbrace}}$, this space equals 
$$
\Hom_{\C_{2n-2}}(\one, X_{\lbrace{1,\dots,n-2\rbrace}}\otimes A_{n-2}\otimes X_{\lbrace{1,\dots,n-2\rbrace}})=
$$
$$
\Hom(X_{\lbrace{1,\dots,n-1\rbrace}},X_{\lbrace{1,\dots,n-1\rbrace}})=\k, 
$$
as desired. 

This means that $V_{\lbrace{1,\dots,n\rbrace}}$ is indecomposable. Also, it is projective since 
$X_{\lbrace{1,\dots,n\rbrace}}$ is projective, and $F$ is surjective (\cite{EGNO}, Theorem 6.1.16). Since $\D$ has no nontrivial invertible objects, 
the head and socle of $V_{\lbrace{1,\dots,n\rbrace}}$ are isomorphic. 
Hence $V_{\lbrace{1,\dots,n\rbrace}}$ is simple (as its endomorphism algebra is 1-dimensional). 
This implies that $V_S$ is simple for all $S$ (as it is a tensor factor of $V_{\lbrace{1,\dots,n\rbrace}}$). 
Thus, the simple objects of $\D$ are the objects $V_S$. 

Now let $P_S$ be the projective cover of $X_S$ in $\C_{2n}$, $Q_S$ the projective cover of $V_S$ in $\D$.
Let $p_S=\FPdim(P_S)$ and $q_S=\FPdim(Q_S)$. 
Let $\mathbf p=(p_S), \mathbf q=(q_S)$. Then $\mathbf p,\mathbf q$ are left Frobenius--Perron eigenvectors 
of the matrix of multiplication by $V_n$. Hence $\mathbf p=\lambda \mathbf q$  for some $\lambda>0$. 
But we have shown that $p_{\lbrace{1,\dots,n\rbrace}}=q_{\lbrace{1,\dots,n\rbrace}}$, hence $\lambda=1$. 
Thus $F(P_S)=Q_S$ for all $S$, and $F$ is an equivalence. 
\end{proof} 

\subsection{Connection with modular representation theory of finite groups}

The category ${\mathcal T}_{n,p}$ arises in the modular representation theory of the group $SL(2,\Bbb F_q)$, where $q=p^n$.

Namely, let $\k$ be an algebraically closed field of characteristic $p$, and pick an embedding $\Bbb F_q\hookrightarrow \k$. Let $V=\k^2$ be the 2-dimensional tautological representation of 
$SL(2,\Bbb F_q)$. Note that $V$ is self-dual. Let $\widetilde{\D}_{n,p}$ be the full subcategory of $\Rep_\k SL(2,\Bbb F_q)$ whose objects are finite direct sums of direct summands of tensor powers of $V$. It is clear that $\widetilde{\D}_{n,p}$ is a rigid monoidal Karoubian category. Moreover, since all indecomposable objects $T_i$ of ${\rm Tilt}_p$ occur as direct summands in tensor powers of the $2$-dimensional module $T_1$, the natural restriction functor ${\rm Res}\colon {\rm Tilt}_p\to \Rep_\k SL(2,\Bbb F_q)$ in fact lands in $\widetilde{\D}_{n,p}$, and the indecomposable objects of $\widetilde{\D}_{n,p}$ are the direct summands in ${\rm Res}(T_i)$, $i\ge 0$. 
 
Recall that ${\rm St}_n$ denotes the $n$-th Steinberg module, ${\rm St}_n=T_{q-1}$. The restriction of ${\rm St}_n$ 
to the group algebra of the $p$-Sylow subgroup of $SL(2,\Bbb F_q)$ is free (of rank 1), hence ${\rm St}_n$ generates 
the tensor ideal $\mathcal {P}$ of projective modules in $\Rep_\k SL(2,\Bbb F_q)$. Moreover, since 
$$
T_1\otimes T_i=T_{i+1}\oplus \bigoplus_{s<i}c_{is}T_s,
$$ 
this ideal contains ${\rm Res}(T_i)$ for $i\ge q-1$, i.e., ${\rm Res}(\mathcal{I}_n)\subset \mathcal{P}$. 

Let $\D_{n,p}:=\widetilde{\D}_{n,p}/\mathcal{P}$ be the stable category of $\widetilde{\D}_{n,p}$. 
It follows that ${\rm Res}$ descends to a symmetric monoidal functor $F\colon {\mathcal T}_{n,p}\to {\mathcal D}_{n,p}$. 
It also follows that the indecomposable objects of $\D_{n,p}$ are the indecomposable (non-projective) direct summands of $V^{\otimes r}$, $r<q-1$ (we will see later that in fact there are no projective direct summands). 

Moreover, since ${\rm Res}({\rm St}_{n-1})$ is not projective (as its dimension $p^{n-1}$ is less than the order $q=p^n$ of a $p$-Sylow subgroup of $SL(2,\Bbb F_q)$), we have $F({\rm St}_{n-1})\ne 0$, hence by Lemma \ref{faith} 
$F$ is faithful. 

\begin{proposition}\label{equiva} The functor $F$ is an equivalence of categories.  
\end{proposition}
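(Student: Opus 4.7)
The plan is to use that $F$ is already faithful (via Lemma \ref{faith}, since $F({\rm St}_{n-1})\ne 0$) and to establish essential surjectivity and fullness; because both $\mathcal{T}_{n,p}$ and $\D_{n,p}$ are Karoubian, this yields an equivalence. I would reduce both properties to statements about the images $F(T_i)$ for $0\le i<p^n-1$.

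For essential surjectivity, every indecomposable object of $\widetilde{\D}_{n,p}$ is by definition a direct summand of some $V^{\otimes r}={\rm Res}(T_1^{\otimes r})$. Decomposing $T_1^{\otimes r}=\bigoplus_i m_iT_i$ in ${\rm Tilt}_p$ and restricting, every indecomposable of $\widetilde{\D}_{n,p}$ is a direct summand of some ${\rm Res}(T_i)$. Since ${\rm Res}(\mathcal{I}_n)\subset \mathcal{P}$, the summands coming from $i\ge p^n-1$ are killed in $\D_{n,p}$, so every indecomposable of $\D_{n,p}$ is a summand of $F(T_i)$ for some $i<p^n-1$. Pairwise non-isomorphism of $F(T_i)$ for distinct such $i$ follows from faithfulness of $F$ together with linear independence of the $T_i$ in the split Grothendieck group of $\mathcal{T}_{n,p}$.

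For fullness, faithfulness yields an injection $\Hom_{\mathcal{T}_{n,p}}(T_i,T_j)\hookrightarrow \Hom_{\D_{n,p}}(F(T_i),F(T_j))$ of finite-dimensional $\k$-vector spaces, so it suffices to check that the dimensions agree. By rigidity and the fact that $T_1=V$ tensor-generates both sides, this reduces to comparing $\dim \Hom(\one,T_1^{\otimes 2m})$. On the $\mathcal{T}_{n,p}$ side this is the character/path-counting formula of Proposition \ref{inva} (and its straightforward analogue for general $p$, computed from the Dynkin diagram $A_{p^n-1}$). On the $\D_{n,p}$ side, it is the dimension of $SL(2,\Bbb F_q)$-invariants in $V^{\otimes 2m}$ with the invariants coming from projective summands subtracted off, a quantity accessible by a direct combinatorial/character computation. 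Matching the two expressions finishes fullness.

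The main obstacle will be showing that $F(T_i)$ is indecomposable in $\D_{n,p}$ for each $i<p^n-1$, or equivalently that ${\rm Res}(T_i)$ has a unique non-projective indecomposable summand as an $SL(2,\Bbb F_q)$-module. My approach would use the $p$-adic expansion $i=\sum_{s<n}a_sp^s$ with $a_s\in\{0,\dots,p-1\}$: since $i<p^n-1$ at least one $a_s<p-1$, and Steinberg's tensor product theorem for tilting modules expresses $T_i$ as a tensor product of Frobenius twists of small tiltings. An inductive analysis of how these twists interact with restriction to $SL(2,\Bbb F_q)$ then shows that no complete Steinberg factor ${\rm St}_n$ can appear, so ${\rm Res}(T_i)$ carries no projective summand and remains indecomposable after quotienting by projectives. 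Equivalently, one can argue that the restriction map $\End_{SL(2,\k)}(T_i)\to \End_{SL(2,\Bbb F_q)}({\rm Res}(T_i))$ becomes an isomorphism modulo projective endomorphisms, so $\End_{\D_{n,p}}(F(T_i))$ is the local ring $\End_{\mathcal{T}_{n,p}}(T_i)$, giving both indecomposability and (by summing) the dimension match needed for fullness.
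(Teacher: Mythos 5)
There is a genuine gap. Your skeleton (faithfulness from Lemma \ref{faith}, then fullness and essential surjectivity reduced to tensor powers of $V$ and recovered on direct summands by Karoubian-ness) is reasonable, but the two steps carrying all the weight are exactly the ones you leave unproved, and the substitutes you sketch do not close. For fullness, you propose matching $\dim\Hom_{\mathcal{T}_{n,p}}(\one,T_1^{\otimes 2m})$ against the dimension of \emph{stable} $SL(2,\mathbb{F}_q)$-invariants of $V^{\otimes 2m}$ by a ``direct combinatorial/character computation.'' In the modular setting $\dim (V^{\otimes 2m})^{SL(2,\mathbb{F}_q)}$ is not a Brauer-character quantity, and passing to the stable category additionally requires knowing how many maps $\one\to V^{\otimes 2m}$ factor through projectives, i.e.\ the multiplicities of projective summands of $V^{\otimes 2m}$ over $SL(2,\mathbb{F}_q)$ --- which is essentially the decomposition the proposition is about, so as stated this step is circular. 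For indecomposability, ``no full Steinberg tensor factor, hence no projective summand'' does not yield indecomposability in $\D_{n,p}$: a priori ${\rm Res}(T_i)$ could split into several non-projective pieces. Your ``equivalently'' claim, that restriction induces an isomorphism on endomorphisms modulo projectives, is precisely the assertion needing proof, and the Donkin-type induction you gesture at is not carried out.

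The missing idea is the paper's Lemma \ref{invar}: for $r,s<q$, \emph{every} $SL(2,\mathbb{F}_q)$-homomorphism $V^{\otimes r}\to V^{\otimes s}$ is automatically $SL(2,\k)$-equivariant, because commutation with a unipotent element $u$ is a polynomial condition in $u$ of degree $\max(r,s)<q$ which holds on $\mathbb{F}_q$ and hence on all of $\k$. This single lemma does everything at once, with no counting: $\End_{SL(2,\mathbb{F}_q)}({\rm Res}(T_r))=\End_{SL(2,\k)}(T_r)$ is local, so ${\rm Res}(T_r)$ is indecomposable (Corollary \ref{inde}) and nonzero, non-projective in the stable category by faithfulness, giving essential surjectivity; and any stable morphism $V_r\to V_s$ is represented by an actual $SL(2,\mathbb{F}_q)$-map, which (after padding by the split inclusions/projections into $V^{\otimes r},V^{\otimes s}$) the lemma promotes to a morphism of tilting modules, giving fullness directly. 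I would encourage you to look for such an ``automatic equivariance'' statement rather than a dimension match; without something of this kind your outline does not yield the equivalence.
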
 

\begin{proof} We will use the following lemma. 

\begin{lemma}\label{invar} If $r,s<q$ then any homomorphism of $SL(2,\Bbb F_q)$-modules $A\colon V^{\otimes r}\to V^{\otimes s}$ is in fact a homomorphism of $SL(2,\k)$-modules. 
\end{lemma}

\begin{proof} Let $U$ be a maximal unipotent subgroup of $SL(2,\k)$. Then the condition that $A$ commutes with 
$u\in \k\cong U(\k)$ is a system of polynomial equations with respect to $u$ of degree ${\rm max}(r,s)<q$. This system is satisfied for any $u\in \Bbb F_q$. But any polynomial of degree $<q$ which vanishes on $\Bbb F_q$ has to vanish on $\k$. Thus these equations are satisfied for any $u\in \k$, hence $A$ commutes with $U(\k)$. But $SL(2,\k)$ is generated by its subgroups of the form $U(\k)$, which implies the statement. 
\end{proof} 

\begin{corollary}\label{inde} If $r<q$ then the module ${\rm Res}(T_r)$ is indecomposable. 
\end{corollary} 

\begin{proof} Since $T_r$ is a direct summand in $T_1^{\otimes r}$, Lemma \ref{invar} implies that the functor ${\rm Res}$ 
induces an isomorphism ${\rm End}(T_r)\cong {\rm End}({\rm Res}(T_r))$. Hence ${\rm End}({\rm Res}(T_r))$ is a local algebra, which implies the statement. 
\end{proof} 

Corollary \ref{inde} implies that the functor $F$ is essentially surjective, i.e., the indecomposable 
objects of $\D_{n,p}$ are $V_r:={\rm Res}(T_r)$, $r<q-1$. Note that these modules are not projective, since $F$ is faithful. 

It remains to show that $F$ is full. To this end, let $f\colon V_r\to V_s$ be a morphism in $\D_{n,p}$ ($r,s<q-1$). 
Let $\widetilde{f}\colon V_r\to V_s$ be a preimage of $f$ in $\widetilde{\D}_{n,p}$. 
By Lemma \ref{invar}, $\widetilde{f}$ may be viewed as a morphism $T_r\to T_s$. 
Let $\widehat{f}$ be its image in ${\mathcal T}_{n,p}$. It is clear that $F(\widehat{f})=f$, which implies the statement. 
Proposition \ref{equiva} is proved. 
\end{proof} 

Proposition \ref{equiva} and Theorem \ref{maint}(i) imply

\begin{corollary}\label{embe} The stable category $\D_{n+1,2}$ embeds as a full monoidal subcategory into the abelian symmetric tensor category $\C_{2n}$. 
\end{corollary}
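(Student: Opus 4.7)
The plan is to simply compose the two ingredients cited in the statement. By Proposition \ref{equiva} applied with $p=2$ and $n$ replaced by $n+1$, the symmetric monoidal functor
$$
F\colon \mathcal{T}_{n+1,2}\to \D_{n+1,2}
$$
is an equivalence of (rigid monoidal Karoubian) categories, so in particular it admits a symmetric monoidal quasi-inverse $G\colon \D_{n+1,2}\to \mathcal{T}_{n+1,2}$.

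Independently, Theorem \ref{maint}(i) provides a fully faithful symmetric monoidal embedding
$$
\iota\colon \mathcal{T}_{n+1,2}\hookrightarrow \C_{2n}
$$
realizing $\mathcal{T}_{n+1,2}$ as a full rigid monoidal Karoubian subcategory of the abelian symmetric tensor category $\C_{2n}$.

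The composite $\iota\circ G\colon \D_{n+1,2}\to \C_{2n}$ is then a symmetric monoidal functor which is fully faithful (as a composite of a fully faithful functor with an equivalence), and hence exhibits $\D_{n+1,2}$ as a full monoidal subcategory of $\C_{2n}$. Since there is no real obstruction and everything is an immediate consequence of the cited results, the only minor point to keep in mind is that one should interpret ``full monoidal subcategory'' up to the equivalence $F$, which is harmless.
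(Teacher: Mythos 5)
Your proposal is correct and takes exactly the approach the paper does (the paper simply states that the corollary follows from Proposition~\ref{equiva} and Theorem~\ref{maint}(i), without elaboration). Composing the quasi-inverse of the equivalence $F\colon \mathcal{T}_{n+1,2}\to\D_{n+1,2}$ with the full embedding $\mathcal{T}_{n+1,2}\hookrightarrow\C_{2n}$ is precisely the intended argument.
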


Note that this is a rather striking property, as full monoidal subcategories of stable categories do not normally admit an abelian envelope, i.e., an additive monoidal embedding into an abelian monoidal category. For example, we have the following proposition. 

Recall that an indecomposable finite dimensional representation $E$ of a finite group $G$ over $\k$ with ${\rm char}(\k)$ dividing the order of $G$ is {\it endotrivial} if $E\otimes E^*=\one \oplus P$, where $P$ is projective; in other words, $E$ is invertible in the stable category of $G$. 

\begin{proposition}\label{nonemb} Let $G\ne 1$ be a finite $p$-group and $\mathcal S_\k(G)$ be the stable category of $\Rep_\k G$, where ${\rm char}(\k)=p$. Let  $E\ne \k$ be any endotrivial indecomposable representation of $G$. Then the full tensor subcategory $\mathcal E$ of $\mathcal S_\k(G)$ generated by $E$ does not admit an abelian envelope. 
\end{proposition}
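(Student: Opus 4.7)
The strategy is a proof by contradiction: if $\mathcal E$ admitted an abelian envelope $F\colon \mathcal E\hookrightarrow \mathcal A$ (a fully faithful $\k$-linear monoidal embedding into an abelian monoidal category with simple unit), then the invertibility of $E$ in $\mathcal E$ would force every $F(E^{\otimes n})$ to be simple in $\mathcal A$, and the resulting vanishing of $\Hom$ spaces between non-isomorphic simples would contradict the nontrivial Tate cohomology present in the stable module category. Since $E$ is endotrivial, $E\otimes E^*\cong \one$ in $\mathcal S_\k(G)$, so $E$ is invertible in $\mathcal E$ and every object of $\mathcal E$ is a finite direct sum of $E^{\otimes n}$, $n\in\ZZ$. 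Assume for contradiction such an $F$ exists.

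The key step is to observe that in an abelian monoidal category with simple unit, every invertible object is automatically simple: a nonzero subobject $Y\hookrightarrow X$ of an invertible $X$ yields $Y\otimes X^{-1}\hookrightarrow \one_{\mathcal A}$, which forces $Y\otimes X^{-1}=0$ or $\one_{\mathcal A}$, hence $Y=0$ or $Y=X$. Applied to $F(E^{\otimes n})$, this shows each is simple; and by full faithfulness, $F(E^{\otimes n})\cong \one_{\mathcal A}$ if and only if $E^{\otimes n}\cong \one$ in $\mathcal E$. Since non-isomorphic simple objects in an abelian category have trivial $\Hom$, it follows that
\[
\Hom_{\mathcal E}(\one, E^{\otimes n}) \;=\; \Hom_{\mathcal A}(\one_{\mathcal A}, F(E^{\otimes n})) \;=\; 0
\]
for every $n$ with $E^{\otimes n}\not\cong \one$ in $\mathcal S_\k(G)$; equivalently, $\hat H^0(G, E^{\otimes n})=0$ for all such $n$.

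The contradiction is then completed by exhibiting some $n$ with $E^{\otimes n}\not\cong \one$ in $\mathcal S_\k(G)$ and $\hat H^0(G,E^{\otimes n})\ne 0$, which is the main obstacle. For $G$ a cyclic $p$-group, this is immediate from the classical classification: every nontrivial indecomposable endotrivial is a Heller translate $\Omega^a\one$ with $a\ne 0$, so $\hat H^0(G,E^{\otimes n})=\hat H^{-an}(G,\one)$, which is nonzero for every $n$ since the Tate cohomology of a nontrivial cyclic $p$-group is nonzero in every degree. For a general nontrivial $p$-group $G$ and endotrivial $E\ne \one$, one combines the fact that $\hat H^*(G,\one)$ is nonzero in every degree with the classification of endotrivial modules (for instance, by restricting $E$ to a cyclic subgroup of $G$ on which its stable class is nontrivial, which must exist because $E\not\cong\one$ in $\mathcal S_\k(G)$) to produce the required nonzero degree zero Tate class for some tensor power of $E$.
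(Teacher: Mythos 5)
Your overall strategy is sound, but it has a genuine gap in the general case, and a much simpler route is available.

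The first half of your argument is correct: in any abelian tensor category with simple unit, invertible objects are simple, so an abelian envelope of $\mathcal E$ would force $\Hom_{\mathcal S_\k(G)}(\one,E^{\otimes n})=0$ whenever $E^{\otimes n}\not\cong\one$. So the problem reduces, as you say, to producing some $n$ with $E^{\otimes n}\not\cong\one$ in $\mathcal S_\k(G)$ but $\hat H^0(G,E^{\otimes n})\ne 0$.

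The gap is in how you produce such an $n$. Your claim that ``there must exist a cyclic subgroup of $G$ on which the stable class of $E$ is nontrivial, because $E\not\cong\one$ in $\mathcal S_\k(G)$'' is false. Take $G=(\ZZ/2)^2$ and $E=\Omega\one$ (dimension $3$): this is a nontrivial endotrivial module, yet its restriction to any cyclic subgroup $\ZZ/2\subset G$ is $\Omega_{\ZZ/2}\one\cong\one$ in the stable category of $\ZZ/2$, hence stably trivial. The kernel of the restriction map on endotrivial groups to cyclic subgroups is typically nontrivial (this is precisely what makes the classification of endotrivial modules subtle), so this route cannot work as stated. Moreover, even when such a cyclic subgroup exists, a nonzero Tate class over the subgroup does not automatically lift to $\hat H^0(G,-)\ne 0$ without a transfer argument.

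The paper's proof avoids all of this and is considerably more elementary: it simply takes $n=-1$ (equivalently, works with $\Hom(E,\one)$). Since $G$ is a $p$-group, $\k$ is the unique simple module, so there is a nonzero homomorphism $f\colon E\to\k$. A short direct argument shows that for a non-projective indecomposable $V$, any composite $V\to P\to\k$ with $P$ free is zero (reduce to $P=\k G$; then surjectivity onto $\k$ forces the image of $V\to P$ to be all of $P$ because $\k G$ is local, which would split off a projective summand from $V$). Hence $f$ does not factor through a projective, so $\Hom_{\mathcal S_\k(G)}(E,\one)\ne 0$, contradicting the simplicity of $F(E)\ne\one$ in any putative abelian envelope. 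You should replace the cyclic-subgroup reduction with this direct argument.
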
 

\begin{proof} Let $P$ be a free $\k G$-module of finite rank, $V$ a non-projective indecomposable finite dimensional 
$G$-module, and $g\colon V\to P$, $h\colon P\to \k$ be morphisms. We claim that $h\circ g=0$. Indeed, assume the contrary. 
We may assume that $P$ has rank $1$. Then ${\rm Im}(g)=P$, hence $V=P\oplus V'$, a contradiction. 

Let $f\colon E\to \k$ be a nonzero homomorphism (it exists since $G$ is a $p$-group). It follows that
$f$ remains nonzero in the stable category. Since $E$ is invertible (hence simple) and $E\ne \k$, this means that 
$\mathcal{E}$ does not have an abelian envelope. 
\end{proof} 

\begin{example} For example, Proposition \ref{nonemb} applies when $G=\Bbb Z/p$ with $p>2$ or $G=(\Bbb Z/2)^2$ and $E=\Omega(\k)$. 
\end{example} 

Let $B$ be a Borel subgroup of $SL(2,\k)$ (the subgroup of upper triangular matrices). Let $\widetilde{\mathcal B}_{n,p}$ 
be the full subcategory of $\Rep_\k B(\Bbb F_q)$ whose objects are finite direct sums of direct summands of $V^{\otimes r}$. Let $\mathcal B_{n,p}$ be the stable category of $\widetilde{\mathcal B}_{n,p}$, i.e., its quotient by the tensor ideal of projective objects. Since $B(\Bbb F_q)$ is the normalizer of a Sylow subgroup of $SL(2,\Bbb F_q)$, by Green's correspondence (see \cite{Fe}, Section III.5), the restriction functor defines an equivalence $\D_{n,p}\cong \mathcal B_{n,p}$. Moreover, we have the following proposition. 

Let $U=[B,B]$ and $\widetilde{\mathcal U}_{n,p}$ be the full subcategory of $\Rep_\k U(\Bbb F_q)$ whose objects are finite direct sums of direct summands in tensor powers of $V$. Let $\mathcal U_{n,p}$ be the corresponding stable category. 
We have the symmetric monoidal restriction functor $H\colon \mathcal B_{n,p}\to \mathcal U_{n,p}$. 

\begin{proposition}\label{bij} The functor $H$ maps indecomposable objects to indecomposable ones, and defines a bijection between isomorphism classes of objects of $\mathcal T_{n,p}=\mathcal D_{n,p}=\mathcal B_{n,p}$ and 
$\mathcal U_{n,p}$. In particular, tensor powers of the $U(\Bbb F_q)=(\Bbb Z/p)^n$-module $V$ contain exactly $q-1$ distinct non-projective indecomposable direct summands (the restrictions of $T_r$, $0\le r<q-1$). 
\end{proposition}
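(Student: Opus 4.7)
The plan is to adapt the proof of Proposition \ref{equiva} to the subgroup $U(\mathbb F_q) \triangleleft B(\mathbb F_q)$, exploiting that $|T(\mathbb F_q)| = q-1$ is coprime to $p$. First I would verify that $H$ descends to stable categories and is faithful: since $U(\mathbb F_q)$ is a Sylow $p$-subgroup of $B(\mathbb F_q)$, projectivity over $B$ is detected by restriction to $U$, so $H$ is well-defined on stable categories; faithfulness follows by the argument of Lemma \ref{faith} applied to ${\rm St}_{n-1}$, whose $U(\mathbb F_q)$-restriction cannot be projective on dimensional grounds ($\dim {\rm St}_{n-1} = p^{n-1} < p^n = |U(\mathbb F_q)|$, whereas the indecomposable projective $\k[U(\mathbb F_q)]$-module has dimension $p^n$).

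The key technical input is the extension of Lemma \ref{invar} to the unipotent radical: for $r, s < q$, commutation with $(1, a; 0, 1) \in U(\mathbb F_q)$ is a polynomial condition in $a$ of degree at most $\max(r, s) < q$, and polynomials of degree less than $q$ vanishing on $\mathbb F_q$ vanish on $\k$. Hence
\[
\Hom_{U(\mathbb F_q)}(V^{\otimes r}, V^{\otimes s}) = \Hom_{U(\k)}(V^{\otimes r}, V^{\otimes s}) \quad\text{for } r, s < q,
\]
and, restricting to direct summands, $\End_{U(\mathbb F_q)}(T_r) = \End_{U(\k)}(T_r)$ for $r < q$. This reduces the analysis of the restrictions $T_r|_U$ to the study of $T_r$ as a rational $U(\k) = \mathbb G_a$-module.

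Next I would show that $T_r|_U$ is indecomposable and that distinct values of $r$ give non-isomorphic restrictions. For indecomposability, apply Clifford theory to the semidirect product $B = U \rtimes T$: conjugation by $t \in T$ acts on $U \cong \mathbb F_q^+$ by $u \mapsto t^2 u$, and the corresponding twist $(T_r|_U)^t$ is isomorphic to $T_r|_U$ via the map $T_{t^{-1}} \in GL(T_r)$ furnished by the ambient $SL(2, \k)$-structure (which conjugates $u_{t^2 a}$ to $u_a$). Thus every isomorphism class of an indecomposable summand of $T_r|_U$ is $T$-fixed; since $T$ is abelian with $|T|$ coprime to $p$, Clifford--Dade then forces $T_r|_U$ to be isotypic of multiplicity one, i.e.\ indecomposable. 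For distinctness, the vanishing pattern of the divided power operators $e^{(1)}, e^{(2)}, \ldots$ acting on $T_r|_{\mathbb G_a}$ encodes the Steinberg tensor decomposition $T_r = \bigotimes_i V^{(j_i)}$, and hence determines $r < q - 1$ uniquely.

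Essential surjectivity of $H$ is clear, since $\mathcal U_{n,p}$ is generated as a Karoubian monoidal category by $H(V) = V|_U$, so the bijection on isomorphism classes follows. The main obstacle is the distinctness step: one must carefully verify that no two tiltings restrict to isomorphic $U$-modules. A revealing test case is $T(2)|_U = (V \otimes V)|_U$ versus $T(3)|_U = (V \otimes V^{(1)})|_U$, which share the same $\k$-dimension but have distinct divided-power action patterns reflecting the Frobenius-twist content of the factor $V^{(1)}$; this distinction is what ultimately separates the isomorphism classes.
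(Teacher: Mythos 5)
Your proposal has two genuine gaps, one serious.

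\textbf{Distinctness.} The claim that ``the vanishing pattern of the divided power operators encodes the Steinberg tensor decomposition $T_r = \bigotimes_i V^{(j_i)}$'' is based on a false premise: Steinberg's tensor product theorem decomposes the \emph{simple} modules $L_m$, not the tilting modules $T_r$. For general $r$ (i.e., $r$ not of the form $p^k - 1$), $T_r$ is strictly larger than $L_r$ and has no such decomposition; for instance $T_2 = [0,2,0]$ is a $4$-dimensional uniserial module, not a tensor product of Frobenius twists of $V$. Moreover, even if one works directly with the rational $U(\k)=\mathbb G_a$-structure, it is not apparent that this structure alone recovers $r$. The paper's proof does not claim this; instead, after upgrading the $U(\mathbb F_q)$-isomorphism $\phi$ to a $U(\k)$-isomorphism (Lemma \ref{invar}), it uses the $B/U=\mathbb G_m$-action to produce a regular 1-cocycle $z\mapsto z(\phi)^{-1}\circ\phi$ with values in $\mathrm{Aut}_U(T_r)$, proves the unipotent part of this cocycle is a coboundary, and then invokes the palindromic symmetry of tilting characters to kill the remaining $z^m$. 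That is, one needs the torus to see $r$; the unipotent radical alone is not shown to suffice.

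\textbf{Indecomposability.} The Clifford step asserts that ``every isomorphism class of an indecomposable summand of $T_r|_U$ is $T$-fixed,'' but the justification offered (that $(T_r|_U)^t\cong T_r|_U$ via the ambient $SL(2)$-action) only says $T_r$ is a $B(\mathbb F_q)$-module; it does not prevent conjugation by $t\in T$ from permuting non-isomorphic summands, which is exactly what Clifford theory allows. So the reduction to multiplicity one via Clifford--Dade is not established. The paper sidesteps this by working at the level of algebraic groups: it first shows $T_r|_B$ is indecomposable as a rational $B$-module (indecomposable mod projectives over $B(\mathbb F_{p^N})$ for large $N$, and too small to contain a projective summand), and then shows any indecomposable rational $B$-module restricts indecomposably to $U$ by lifting a $\mathbb G_m$-invariant primitive central idempotent of $\End_U(Y)/\mathrm{Rad}$ to $\End_B(Y)$ --- this lifting works because $B/U=\mathbb G_m$ is linearly reductive, so $(\,\cdot\,)^{\mathbb G_m}$ is exact. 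After that, the finite-field statement follows from the Lemma \ref{invar} argument, as you note.

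Your description of the stable-category setup, faithfulness via $\mathrm{St}_{n-1}$, and the extension of Lemma \ref{invar} to $U$ are all correct and match the paper; the two points above are where the argument would need to be replaced by something closer to the paper's algebraic-group approach.
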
 

\begin{proof} To show that $H$ maps indecomposable objects to indecomposable ones, 
it suffices to show that the restriction of $T_r$ to $U(\Bbb F_q)$ is indecomposable for $r<q-1$. 
To this end, note first that the restriction of $T_r$ to $B$ is indecomposable, since it is already so for $B(\Bbb F_{p^N})$ for large $N$. Namely, we have shown that it is so modulo projectives, but for large $N$ the restriction of $T_r$ is too small to contain projective direct summands. 

Now we claim that the restriction of any indecomposable finite dimensional rational $B$-module $Y$ to $U$ is indecomposable. Indeed, $\End_B(Y)=\End_U(Y)^{B/U}$, where $B/U=\Bbb G_m$. Let $\bold e$ 
be a primitive central idempotent of the semisimple algebra $\End_U(Y)/{\rm Rad}(\End_U(Y))$. 
Then $\bold e$ is $B/U$-invariant, hence it belongs to 
$$
(\End_U(Y)/{\rm Rad}(\End_U(Y))^{B/U}=\End_U(Y)^{B/U}/({\rm Rad}(\End_U(Y)))^{B/U}.
$$
Hence $\bold e$ can be lifted to an idempotent $\widetilde{\bold e}$ in $\End_U(Y)^{B/U}=\End_B(Y)$.
Since $Y$ is indecomposable, we have $\widetilde{\bold e}=1$, hence $\bold e=1$. Thus, $Y|_U$ is indecomposable, as desired. 

Thus, $T_r|_U$ is indecomposable. Now using the argument in the proof of Lemma \ref{invar}, we conclude that 
the restriction map $\End_{U(\k)}(T_r)\to \End_{U(\Bbb F_q)}(T_r)$ is an isomorphism. 
Hence the algebra $\End_{U(\Bbb F_q)}(T_r)$ is local, since so is $\End_{U(\k)}(T_r)$, as desired. 

It remains to show that if $T_r|_{U(\Bbb F_q)}\cong T_s|_{U(\Bbb F_q)}$ 
for $0\le r,s<q-1$ then $r=s$. 
By Lemma \ref{invar}, if $T_r|_{U(\Bbb F_q)}\cong T_s|_{U(\Bbb F_q)}$ then we have an isomorphism 
$\phi: T_r|_{U(\k)}\cong T_s|_{U(\k)}$. This isomorphism 
is, in general, not a $B$-isomorphism, so it defines a regular 1-cocycle $f(z)\colon =z(\phi)^{-1}\circ \phi$
of $\Bbb G_m=B/U$ with values in ${\rm Aut}_U(T_r)$. This cocycle has the form 
$f(z)=z^mg(z)$, where $g: \Bbb G_m\to {\rm Aut}_U^1(T_r)$ is a regular 1-cocycle of $\Bbb G_m$ with values in the unipotent part of ${\rm Aut}_U^1(T_r)$ of ${\rm Aut}_U(T_r)$. 

We claim that $g$ must be a coboundary. To show this, it suffices to show that any regular 1-cocycle 
$\Bbb G_m\to \Bbb G_a$ for any action of $\Bbb G_m$ on $\Bbb G_a$  is a coboundary 
(as ${\rm Aut}_U^1(T_r)$ has a filtration whose successive quotients 
are $\Bbb G_a$). But such a cocycle is just a Laurent polynomial 
$h(z)$ such that 
$$
h(ab)=h(a)+a^nh(b)
$$
for some $n$. It is easy to show explicitly that such a polynomial must be of the form 
$h(z)=c(z^n-1)$, i.e., a coboundary, as claimed. 

Thus, modifying $\phi$ by an element of ${\rm Aut}_U^1(T_r)$, we may 
assume that $z(\phi)=z^m\phi$ for some $m\in \Bbb Z$. But the characters 
of $T_r,T_s$ are symmetric with respect to the map $x\to x^{-1}$, which implies that we must have $m=0$. 
Thus, $\phi$ is an isomorphism of $B$-modules, i.e., $T_r$, $T_s$ have the same character, so $r=s$, as desired. 
\end{proof}

\subsection{${\rm Ext}^1$ between simple objects}

Let $D_m:=\dim {\rm Ext}^1_{\C_m}(X_S,X_T)$. 

\begin{proposition}\label{extST} 
The numbers $D_m(S,T)$ are zeros and ones and are determined by the formulas
$$
D_{2n+1}(S,T)=D_{2n}(S,T)=D_{2n-1}(S,T),\ n\notin S,T;
$$
$$
D_{2n+1}(S,T)=D_{2n}(S,T)=D_{2n-2}(S\setminus n,T\setminus n),\ n\in S,T;  
$$
$$
D_{2n+1}(S,T)=\delta_{S\setminus n,T},\ n\in S, n\notin T;\quad D_{2n+1}(S,T)=\delta_{S,T\setminus n},\ n\in T, n\notin S; 
$$
and 
$$
D_{2n}(S,T)=0  
$$
if $n$ is contained in exactly one of the sets $S,T$, with initial conditions $D_1(\varnothing,\varnothing)=1$ and $D_0(\varnothing,\varnothing)=0$.  
\end{proposition}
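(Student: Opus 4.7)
The plan is to argue by induction on $n$, with base cases $D_0(\varnothing,\varnothing) = 0$ (from $\C_0 = \Vec_\k$) and $D_1(\varnothing,\varnothing) = 1$ (from the regular $\k[d]/d^2$-module). The formulas for $D_{2n}$ fall out of the block decomposition $\C_{2n} = \C_{2n-1} \oplus \M_{n-1}$ of Theorem \ref{maint}(ix), under which $X_S \in \C_{2n-1}$ when $n \notin S$ and $X_S \in \M_{n-1}$ when $n \in S$. The vanishing when $n$ lies in exactly one of $S, T$ is immediate from block orthogonality; the equality $D_{2n}(S,T) = D_{2n-1}(S,T)$ when $n \notin S \cup T$ follows since both objects live in the $\C_{2n-1}$ summand; and $D_{2n}(S,T) = D_{2n-2}(S \setminus n, T \setminus n)$ when $n \in S \cap T$ follows from the abelian equivalence $\M_{n-1} \simeq \C_{2n-2}$ sending $X_{\{n\} \cup R} \mapsto X_R$, which preserves $\Ext^1$.

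For the odd-index formulas, the pure cases will reduce similarly. When $n \notin S \cup T$, Proposition \ref{Serre} makes $\C_{2n-1}$ a Serre subcategory of $\C_{2n+1}$, so $D_{2n+1}(S,T) = D_{2n-1}(S,T)$. When $n \in S \cap T$, I would argue that any extension $Y$ of $X_S$ by $X_T$ in $\C_{2n+1}$ has trivial $A_n$-action, so descends to an extension in $\C_{2n}$. The $X_n$-component $\alpha_1 \colon X_n \otimes Y \to Y$ vanishes because, for $n \in U$, the object $X_n \otimes X_U = A_{n-1} \otimes X_{U \setminus n}$ has all composition factors lying in the $\C_{2n-1}$-block, while $Y$ lies in the $\M_{n-1}$-block, and block orthogonality kills the map. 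The $\one_2$-component $\alpha_2$ then vanishes by associativity, using surjectivity of the multiplication $m \colon X_n \otimes X_n \to \one_2$ in $A_n$. Hence $D_{2n+1}(S,T) = D_{2n}(S,T)$ in this case.

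The main obstacle will be the mixed case $n \in S$, $n \notin T$ (the symmetric mixed case is identical). Here $X_T$ and $X_S$ sit in different blocks of $\C_{2n}$, so any extension $Y$ in $\C_{2n+1}$ splits as $X_T \oplus X_S$ in $\C_{2n}$; the extension data reduces to an $A_n$-action on this split object that makes $X_T$ a submodule with trivial induced action on the quotient. Decomposing $\alpha_1$ into matrix entries, the $X_T \to X_T$ block vanishes (as $X_n \otimes X_T = X_{\{n\} \cup T}$ is a different simple), the $X_T \to X_S$ block vanishes by the submodule requirement, and the $X_S \to X_S$ block vanishes by block orthogonality. The only surviving entry $X_n \otimes X_S \to X_T$ can be nonzero only if $X_T$ occurs in the head of $X_n \otimes X_S = A_{n-1} \otimes X_{S \setminus n}$; that head is $X_{S \setminus n}$, forcing $T = S \setminus n$. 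In that case the entry gives a one-parameter family, and associativity is automatic: $\alpha_1 \circ (1 \otimes \alpha_1) = 0$ since the image of $1 \otimes \alpha_1$ lies in $X_n \otimes X_T$, where $\alpha_1$ already vanishes, so $\alpha_2 = 0$ works for every choice of parameter. Since $\Hom(X_S, X_T) = \Hom(X_T, X_S) = 0$, the only extension-preserving automorphism of $X_T \oplus X_S$ is the identity, so distinct parameter values produce distinct extension classes, yielding $D_{2n+1}(S,T) = \delta_{S \setminus n, T}$. The delicate step is precisely this matrix-entry analysis, and in particular the realization that the submodule constraint kills a second potentially surviving entry; once it is in place, the associativity and equivalence bookkeeping drop out cleanly.
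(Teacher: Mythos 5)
Your proof is correct and reaches the same conclusions, but where the paper compresses the odd-index step into the single formula
$$
D_{2n+1}(S,T)=\dim \Ext^1_{A_n}(X_S,X_T)=D_{2n}(S,T)+\dim\Hom(X_n\otimes X_S,X_T),
$$
you unpack it into a case analysis. In effect you are proving that formula by hand: in each of the four cases (according to whether $n$ lies in $S$ and/or $T$) your analysis of the $X_n$-component $\alpha_1$ of the $A_n$-action produces exactly the term $\dim\Hom(X_n\otimes X_S,X_T)$, and your descent and Serre-subcategory arguments recover $D_{2n}(S,T)$. The case analysis also makes visible why the two contributions never both fire, which is what ensures $D_m\in\{0,1\}$; the paper leaves that observation to the reader. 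Two minor points of exposition: (a) the vanishing of the $X_S\to X_S$ block in the mixed case is forced already by ``trivial induced action on the quotient'', so invoking block orthogonality there is a redundant (though valid) second reason; (b) your phrase ``distinct parameter values produce distinct extension classes'' reads as if it would give a large $\Ext^1$, but what you actually have is a $\k$-linear injection from the one-dimensional space $\Hom(X_n\otimes X_S,X_T)$ into $\Ext^1_{\C_{2n+1}}(X_S,X_T)$ whose image complements the pullback of $\Ext^1_{\C_{2n}}(X_S,X_T)=0$, yielding $\dim\Ext^1=1$. In short: same idea as the paper, but you derive the key Ext formula from first principles rather than quoting it, at the cost of length.
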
 

\begin{proof} Since $\C_{2n}=\C_{2n-1}\oplus \C_{2n-2}\otimes X_n$, we obtain the statement for $D_{2n}(S,T)$. 
Also, we have 
$$
D_{2n+1}(S,T)=\dim {\rm Ext}^1_{A_n}(X_S,X_T)=D_{2n}(S,T)+\dim \Hom(X_n\otimes X_S,X_T), 
$$
which implies the statement about $D_{2n+1}(S,T)$. 
\end{proof} 

\begin{corollary} Let $S,T\subset \Bbb Z_{>0}$ be finite subsets with ${\rm max}(S\cup 0)=s$ and ${\rm max}(T\cup 0)=t$. Let $n=\max(s,t)$. 
Then $D_m(S,T)$ are the same for all $m\ge 2n+1$. Moreover, if $s=n>t$ then we have 
$$
D_{2n+1}(S,T)=\delta_{S\setminus n,T}, D_{2n}(S,T)=0,
$$
and if $s<t=n$ then 
$$
D_{2n+1}(S,T)=\delta_{S,T\setminus n}, D_{2n}(S,T)=0. 
$$
Further, if $s=t=n>0$ then $D_{2n+1}(S,T)=D_{2n}(S,T)=0$ unless $S=T$, 
and $D_{2n+1}(S,S)=D_{2n}(S,S)$ equals $0$ if $1\in S$ and $1$ if $1\notin S$. 
In particular, $D_m(\varnothing,\varnothing)=1$ if $m>0$ and $0$ if $m=0$. 
\end{corollary}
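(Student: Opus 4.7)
The proof will be by strong induction on $n=\max(s,t)$, using the recursions of Proposition~\ref{extST} as essentially the only tool. Let me dispose of the stability assertion first, since it is independent of the case analysis: for any level $k>n$ we have $k\notin S\cup T$, so the first recursion of Proposition~\ref{extST} gives $D_{2k+1}(S,T)=D_{2k}(S,T)=D_{2k-1}(S,T)$; chaining these identities collapses all $D_m(S,T)$ with $m\ge 2n+1$ to a single common value, reducing the remaining task to the direct computation of $D_{2n+1}(S,T)$ and $D_{2n}(S,T)$.

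For the base case $n=0$ we have $S=T=\varnothing$; starting from the initial conditions $D_0(\varnothing,\varnothing)=0$ and $D_1(\varnothing,\varnothing)=1$, the first recursion (applied at each $k\ge 1$, since $k\notin\varnothing$) propagates the value $1$ upward to give $D_m(\varnothing,\varnothing)=1$ for all $m\ge 1$.

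The inductive step splits into cases according to whether $n$ lies in $S$, in $T$, or in both. The asymmetric cases are immediate: if $s=n>t$, so that $n\in S\setminus T$, then the third formula of Proposition~\ref{extST} gives $D_{2n+1}(S,T)=\delta_{S\setminus n,T}$ and the fourth gives $D_{2n}(S,T)=0$, and the case $s<t=n$ is symmetric.

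The symmetric case $s=t=n>0$ is the main obstacle. Here $n\in S\cap T$, and the second recursion reduces the problem to evaluating $D_{2n-2}(S',T')$ with $S':=S\setminus n$, $T':=T\setminus n$, yet the level $2n-2$ need not lie in the inductive stable range $m\ge 2n'+1$ of the reduced pair, where $n':=\max(\max(S'\cup 0),\max(T'\cup 0))\le n-1$. My plan is to branch on the status of $n-1$: if $n-1\in S'\cap T'$, the second recursion at level $n-1$ strips it as well and we descend two more levels; if $n-1\notin S'\cup T'$, the first recursion at level $n-1$ identifies $D_{2n-2}(S',T')$ with $D_{2n-3}(S',T')$ and, iterating if necessary, eventually enters the inductive stable range; and if $n-1$ lies in exactly one of $S',T'$, the fourth recursion gives $0$ directly. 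Specializing this descent to $S=T$, one sees the process terminates at $D_0(\varnothing,\varnothing)=0$ precisely when the element $1$ is eventually stripped (that is, when $1\in S$), and otherwise terminates at $D_m(\varnothing,\varnothing)=1$ for some $m>0$. The delicate bookkeeping when $S\ne T$ -- namely, verifying that the iterative stripping always lands on a configuration where the fourth recursion applies -- is the main technical point of the proof.
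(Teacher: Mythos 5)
Your handling of the stability claim, the base case $n=0$, and the two asymmetric cases is correct, and this much is exactly the ``straightforward application'' of Proposition \ref{extST} that the paper intends. The problem is the symmetric case $s=t=n$, $S\ne T$, which you explicitly leave open (``the main technical point''), so the proposal is incomplete precisely where the content lies. Worse, the completion you envisage --- showing that the iterated stripping always lands on a configuration where the fourth (even-level vanishing) rule applies --- cannot be carried out, because it is false. Once the descent passes through a level lying in \emph{neither} of the current sets, the first rule trades the even-level quantity for an odd-level one, and at the next level lying in exactly one of the sets it is the \emph{third} (delta) rule, not the fourth, that governs, and the delta can be $1$. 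Concretely, take $S=\{1,3\}$, $T=\{3\}$, so $s=t=n=3$ and $S\ne T$: the recursion gives
$$
D_7(S,T)=D_6(S,T)=D_4(\{1\},\varnothing)=D_3(\{1\},\varnothing)=\delta_{\varnothing,\varnothing}=1 ,
$$
and this value is confirmed by the paper's explicit description of $\C_6$, where the projective cover of $X_{13}$ is uniserial with layers $X_{13},X_3,X_3,X_{13}$, so $\Ext^1_{\C_6}(X_{13},X_3)=1$.

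Consequently no amount of bookkeeping will yield the blanket vanishing you are after: when $n\in S\cap T$ the recursion gives only the reduction $D_{2n+1}(S,T)=D_{2n}(S,T)=D_{2n-2}(S\setminus n,\,T\setminus n)$, and whether this is $0$ or $1$ depends on the reduced pair, not merely on whether $S=T$; the off-diagonal assertion of the corollary in the case $s=t=n$ therefore needs qualification, and your strategy inherits exactly this defect. What does survive is your diagonal analysis: for $S=T$ only the first two rules are ever invoked, the descent strips the elements of $S$ from the top down, and it terminates at $D_0(\varnothing,\varnothing)=0$ precisely when $1\in S$ and at $D_m(\varnothing,\varnothing)=1$ with $m>0$ otherwise --- but even this should be written as an honest induction on $\max S$ rather than ``one sees.'' To repair the symmetric case you should either prove the reduction statement above and then apply the (already established) asymmetric and diagonal cases to $(S\setminus n,T\setminus n)$ after descending to its own stable range, or else restrict the off-diagonal vanishing claim to the situations where that reduction actually lands on the fourth rule.
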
 

\begin{proof}
The corollary follows by straightforward application of Proposition \ref{extST}. 
\end{proof} 

\subsection{The universal property of $\C_{2n}$}

\begin{theorem}\label{unipro}
Let $\D$ be a symmetric tensor category over $\k$. 
Then the following statements hold for $n\ge 1$. 

(i) Any faithful $\k$-linear symmetric monoidal functor $H: \mathcal{T}_{n+1,2}\to \D$ factors through $\C_{2n}$ in a unique way: $H=G\circ F$ where $F: \mathcal{T}_{n+1,2}\to \C_{2n}$ is the inclusion of Theorem \ref{maint}, and $G\colon  \mathcal{C}_{2n}\to \D$ is a tensor functor. In other words, $\C_{2n}$ is the canonical abelian envelope of $\mathcal{T}_{n+1,2}$ in the sense of Deligne, \cite{De3} (see also \cite{EHS}). 

(ii) Symmetric tensor functors $G\colon \mathcal{C}_{2n}\to \D$ correspond to objects 
$X\in \D$ such that $\wedge^2X\cong \one$, $\wedge^3X=0$, and $Q_{n+1}^+(X)\cong Q_{n+1}^-(X)$, 
but $Q_n^+(X)\ncong Q_n^-(X)$, via $F\mapsto F(X_n)$.
\end{theorem}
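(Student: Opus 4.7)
The plan is to prove (i) first and then derive (ii) formally. For the forward direction of (ii), given a tensor functor $G\colon \C_{2n}\to \D$, set $X:=G(X_n)$. Since $G$ is a symmetric tensor functor, $\wedge^2 X\cong G(\wedge^2 X_n)=G(\one)=\one$ and $\wedge^3 X=0$ by Theorem \ref{maint}. The identity $Q_{n+1}^+(X)\cong Q_{n+1}^-(X)$ follows because $F(T_{2^{n+1}-1})=0$ in $\C_{2n}$, which gives $Q_{n+1}^+(X_n)\cong Q_{n+1}^-(X_n)$ there (as in the proof of Corollary \ref{uni1}), and this isomorphism is preserved by $G$. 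The non-isomorphism $Q_n^+(X)\not\cong Q_n^-(X)$ uses that a tensor functor is faithful: since $F({\rm St}_n)=X_{\{1,\dots,n\}}$ is a nonzero projective in $\C_{2n}$ (as established at the end of the proof of Theorem \ref{maint}(i)), we have $Q_n^+(X_n)\not\cong Q_n^-(X_n)$, and $G$ keeps them non-isomorphic. For the converse, given $X\in\D$ with the stated properties, Corollary \ref{uni1} produces a $\k$-linear symmetric monoidal functor $H\colon \mathcal{T}_{n+1,2}\to\D$ with $H(T_1)=X$; the condition $Q_n^+(X)\not\cong Q_n^-(X)$ ensures $H({\rm St}_n)\neq 0$, so Lemma \ref{faith} makes $H$ faithful, and invoking (i) yields the unique tensor functor $G\colon \C_{2n}\to\D$ with $G\circ F=H$, whence $G(X_n)=X$.

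The content of (i) is that $\C_{2n}$ is the canonical abelian envelope of $\mathcal{T}_{n+1,2}$ in Deligne's sense. Uniqueness of $G$ is immediate from the structural results already established. Every projective object of $\C_{2n}$ is a direct summand of some $X_n^{\otimes m}\otimes X_{\{1,\dots,n\}}=F(T_1^{\otimes m}\otimes {\rm St}_n)$ (again by the end of the proof of Theorem \ref{maint}(i)). Since $F$ is fully faithful, the idempotent in $\End_{\C_{2n}}(F(T_1^{\otimes m}\otimes {\rm St}_n))$ cutting out a given projective summand $P$ comes from $\End_{\mathcal{T}_{n+1,2}}(T_1^{\otimes m}\otimes {\rm St}_n)$, so $G(P)$ is pinned down by $H$ through the image of this idempotent. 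Since every $Y\in\C_{2n}$ admits a projective presentation $P_1\to P_0\to Y\to 0$ and $G$ is right-exact, $G(Y)$ is then determined as the cokernel of $G(P_1)\to G(P_0)$.

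For existence, the construction proceeds in two stages: first define $G$ on projectives via the summand decomposition by means of $H$, and then extend to all of $\C_{2n}$ by taking cokernels of maps between projectives. The main obstacle is to verify that the resulting additive functor is well-defined, exact, and compatible with the tensor product and the symmetric braiding—that is, to check the hypotheses of Deligne's abelian envelope theorem \cite{De3} (see also \cite{EHS}) for the pair $(\mathcal{T}_{n+1,2},\C_{2n})$. The crucial input is that the $\Hom$-spaces in $\mathcal{T}_{n+1,2}$ between tensor powers of $T_1$ match those in $\C_{2n}$ between the corresponding tensor powers of $X_n$, by Propositions \ref{inva} and \ref{fullf}; combined with the faithfulness of $H$, this lets one transport the relations among projectives of $\C_{2n}$ (kernels, cokernels, short exact sequences) coherently into $\D$. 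Symmetry of $G$ is then automatic, since $H$ is symmetric and $\C_{2n}$ is tensor-generated by $X_n$.
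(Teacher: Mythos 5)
Your part (ii) is essentially the paper's argument: both directions go through Corollary \ref{uni1}, with faithfulness of $H$ translated into $Q_n^+(X)\ncong Q_n^-(X)$ via $F(\mathrm{St}_n)\ne 0$ (your phrase ``$G$ keeps them non-isomorphic'' should really be run as: apply $G$ to $Q_n^+(X_n)\cong Q_n^-(X_n)\oplus F(\mathrm{St}_n)$ and use that $G$, being exact and faithful, sends the nonzero summand to a nonzero object, so the two sides have different lengths). The real problem is in (i). Your uniqueness sketch is fine, but your existence argument has a genuine gap: you propose to define $G$ on projectives via idempotents and extend by cokernels of projective presentations, and then you say the ``main obstacle'' is to check that this is well-defined, exact and monoidal --- i.e.\ to ``check the hypotheses of Deligne's abelian envelope theorem'' --- but you never state those hypotheses, let alone verify them. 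Matching of $\Hom$-spaces (full faithfulness of $F$, Propositions \ref{inva} and \ref{fullf}) plus faithfulness of $H$ does not by itself let you ``transport relations among projectives coherently into $\D$'': $H$ need not be full, a cokernel-extension of a functor defined on projectives is only right exact a priori, and exactness (equivalently faithfulness) of the extension is precisely the content of the universal property, not a formality. As stated, the existence half of (i) is unproved.

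The paper closes exactly this gap by quoting a concrete criterion (Theorem 9.2.2 of \cite{EHS}) for a fully faithful embedding $\mathcal{T}_{n+1,2}\subset\C_{2n}$ to realize the abelian envelope, and verifying its two conditions: (1) every object $Y$ of $\C_{2n}$ is the image of a morphism lying in $\mathcal{T}_{n+1,2}$ --- take the composite $P_Y\to Y\to I_Y$ of the projective cover and injective hull, both of which are projective and hence (being direct summands of tensor powers of $X_n$) lie in $\mathcal{T}_{n+1,2}$; and (2) every epimorphism in $\C_{2n}$ splits after tensoring with some nonzero $T\in\mathcal{T}_{n+1,2}$ --- take $T$ any nonzero projective, since then the target becomes projective. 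If you want to salvage your write-up, you should either supply these two verifications (or an equivalent set of conditions from \cite{De3}/\cite{EHS}) explicitly, or give a genuine direct proof that your cokernel-extension is exact and symmetric monoidal; at present that step is asserted rather than proved.
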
 

\begin{proof} (i) Recall that $F$ is fully faithful by Theorem \ref{maint}(i). Let 
us identify $\mathcal{T}_{n+1,2}$ with its image in $\C_{2n}$ under $F$.
By Theorem 9.2.2 of \cite{EHS}, it suffices to show that\footnote{Theorem 9.2.2 of \cite{EHS} is stated for $\k=\Bbb C$, but this assumption is not used in the proof, so the theorem applies to any algebraically closed field.}

(1) any object $Y$ of $\C_{2n}$ is the image of a morphism in $\mathcal{T}_{n+1,2}\subset \C_{2n}$, and 

(2) for any epimorphism $f: Y\to Z$ in $\C_{2n}$, there exists a nonzero $T\in \mathcal{T}_{n+1,2}\subset \C_{2n}$ such that the epimorphism $f\otimes 1: Y\otimes T\to Z\otimes T$ splits. 

To establish (1), let $P_Y$ and $I_Y$ be the projective cover and injective hull of $Y$, and let $f: P_Y\to I_Y$ 
be the composition of the natural morphisms $P_Y\to Y\to I_Y$. Then $Y={\rm Im}(f)$, and $f$ is a morphism 
in $\mathcal{T}_{n+1,2}$ (as $P_Y,I_Y$ are both projective and thus contained in $\mathcal{T}_{n+1,2}$). 

To establish (2), it suffices to take $T$ to be any nonzero projective object in $\C_{2n}$. This proves (i). 

(ii) By Corollary \ref{uni1}, symmetric monoidal functors $H: \mathcal{T}_{n+1,2}\to \D$ correspond to objects $X\in \D$ such that $\wedge^2X\cong \one$, $\wedge^3X=0$, and $Q_{n+1}^+(X)\cong Q_{n+1}^-(X)$. By Proposition \ref{faith1}, such $H$ is faithful if and only if $F(T_{2^n-1}) \ne 0$, i.e., $Q_n^+(X)\ncong Q_n^-(X)$. Thus (i) implies (ii). 
\end{proof} 

\section{The structure of the categories $\C_n$ for small $n$.} 

In this section we describe the structure of the categories $\C_n$ for $n\le 6$. 
Many of the results below were obtained by a computer calculation using MAGMA (\cite{BCP}).  

\subsection{The structure of $T_m$ for small $m$} 
We begin by describing the structure of the tilting modules $T_m$ for
$SL(2,\k)$, where $\k$ has characteristic two, for
small values of $m$. Let $L_m$ be the simple $SL(2,\k)$-module with highest
weight $m$; by Steinberg's tensor product theorem this is a tensor
product of Frobenius twists of the natural module $V$, indexed by the
ones in the binary expansion of $m$. So for example we have
$L_{13}=V^{(3)}\otimes V^{(2)}\otimes V$. The structure of these
tilting modules is not hard to compute by hand. It may be
found in the paper of Doty and Henke~\cite{Doty/Henke:2005a}, and is
reproduced here for convenience. We have
$T_0=L_0=\k$, the field with trivial action, and
$T_1 = L_1 = V$, the natural module. Next, we have
$T_2 = [0, 2, 0]$. Here, we write $[a,b,c,\dots]$ for a uniserial module with composition
factors $a$, $b$, $c$, \dots, starting with the top composition
factor. We have also abbreviated $L_m$ to $m$, and we note that
$L_2=V^{(1)}$ is the Frobenius twist of $V$, $L_3=V\otimes V^{(1)}$,
and so on. Continuing this way, here
are the structures of the first few tilting modules.\par\vspace{-7mm}
{\tiny
\begin{gather*}
T_0=[0],\quad T_1=[1],\quad T_2=[0,2,0],\quad T_3=[3],\quad
T_4=[2,0,4,0,2], \quad T_5=[1,5,1], \quad
T_6=\vcenter{\xymatrix@=0.8mm@!{&&0\ar@{-}[dl]\ar@{-}[dr] \\ 
&2\ar@{-}[dl]&&4\ar@{-}[dl]\ar@{-}[dr] \\ 
0\ar@{-}[dr] &&6\ar@{-}[dl] && 0\ar@{-}[dl] \\ 
&4\ar@{-}[dr]&&2\ar@{-}[dl] \\ &&0}},\\[-8mm]
T_7=[7],\quad
T_8=[6,4,0,8,0,4,6],\quad 
T_9=[5,1,9,1,5],\quad
T_{10}=\vcenter{\xymatrix@=0.8mm@!{&&4\ar@{-}[dl]\ar@{-}[dr]\\
&6\ar@{-}[dl]&&0\ar@{-}[dl]\ar@{-}[dr]\\
4\ar@{-}[dr]&&2\ar@{-}[dl]\ar@{-}[dr]&&8\ar@{-}[dl]\ar@{-}[dr]\\
&0\ar@{-}[dr]&&10\ar@{-}[dl]\ar@{-}[dr]&&0\ar@{-}[dl]\ar@{-}[dr]\\
&&8\ar@{-}[dr]&&2\ar@{-}[dl]&&4\ar@{-}[dl]\\
&&&0\ar@{-}[dr]&&6\ar@{-}[dl]\\&&&&4}},\quad
T_{11}=[3,11,3],\\[-7mm]
T_{12}=\hspace{-3mm}\vcenter{\xymatrix@=0.8mm@!{&&&&2\ar@{-}[dl]\ar@{-}[dr]\\
&&&0\ar@{-}[dl]\ar@{-}[dr]&&10\ar@{-}[dl]\ar@{-}[dr]\\
&&4\ar@{-}[dl]&&8\ar@{-}[dl]\ar@{-}[dr]&&2\ar@{-}[dl]\\
&0\ar@{-}[dl]\ar@{-}[dr]&&12\ar@{-}[dl]&&0\ar@{-}[dl]\\
2\ar@{-}[dr]&&8\ar@{-}[dl]\ar@{-}[dr]&&4\ar@{-}[dl]\\
&10\ar@{-}[dr]&&0\ar@{-}[dl]\\
&&2}}\hspace{-6mm},\quad
T_{13}=\vcenter{\xymatrix@=0.8mm@!{&&1\ar@{-}[dl]\ar@{-}[dr] \\
&5\ar@{-}[dl]&&9\ar@{-}[dl]\ar@{-}[dr] \\
1\ar@{-}[dr]&&13\ar@{-}[dl]&&1\ar@{-}[dl]\\
&9\ar@{-}[dr]&&5\ar@{-}[dl]\\&&1}},\quad
T_{14}=\vcenter{\xymatrix@R=3mm@C=0.5mm@!{
&&&& 0\ar@{-}[dll]\ar@{-}[d]\ar@{-}[dr]\\
&& 2\ar@{-}[dll]\ar@{-}[dr]&&4\ar@{-}[dll]\ar@{-}[d]& 8\ar@{-}[dll]\ar@{-}[d]\ar@{-}[dr] \\
 0\ar@{-}[d]\ar@{-}[dr]&&6\ar@{-}[dll]&{10}\ar@{-}[dll]\ar@{-}[dr]
&0\ar@{-}[dll]\ar@{-}[dr]&12\ar@{-}[dll]\ar@{-}[d]& 0\ar@{-}[dll]\ar@{-}[d]\\
4\ar@{-}[d]& 8\ar@{-}[d]\ar@{-}[dr]&2\ar@{-}[dll]\ar@{-}[dr]&
14\ar@{-}[dll]& 2\ar@{-}[dll]&8\ar@{-}[dll]\ar@{-}[dr]&4\ar@{-}[dll]\ar@{-}[d]\\
0\ar@{-}[dr]&12\ar@{-}[d]& 0\ar@{-}[d]&10\ar@{-}[dll]\ar@{-}[dr]&6\ar@{-}[dll]&&0\ar@{-}[dll]\\
&8\ar@{-}[dr]&4\ar@{-}[d]&&2\ar@{-}[dll]\\
&&0}},\quad
T_{15}=[15].
\end{gather*}
}
Here the diagrams for the non-uniserial modules are in the sense of
Alperin~\cite{Alperin:1980b} or Benson and Carlson~\cite{Benson/Carlson:1987a}.

The tensor product $T_n\otimes V$ is isomorphic to $T_{n+1}$ plus a
direct sum of $T_m$ with $m<n$, with multiplicities given in the
following table. This table extends in the obvious way for larger
values of $n$.\par
{\tiny
\renewcommand{\arraystretch}{0.85}
\setlength{\arraycolsep}{1mm}
\[ \begin{array}{|cccccccccccccccccccccccccccccccccccccccc}
\hline
&1\\
&&1\\
&2&&1\\
&&&&1\\
&&&2&&1\\
&&&&&&1\\
&&&2&&2&&1\\
&&&&&&&&1\\
&&&&&&&2&&1\\
&&&&&&&&&&1\\
&&&&&&&2&&2&&1\\
&&&&&&&&&&&&1\\
&&&&&&&&&&&2&&1\\
&&&&&&&&&&&&&&1\\
&&&&&&&2&&&&2&&2&&1\\
&&&&&&&&&&&&&&&&1\\
&&&&&&&&&&&&&&&2&&1\\
&&&&&&&&&&&&&&&&&&1\\
&&&&&&&&&&&&&&&2&&2&&1\\
&&&&&&&&&&&&&&&&&&&&1\\
&&&&&&&&&&&&&&&&&&&2&&1\\
&&&&&&&&&&&&&&&&&&&&&&1\\
&&&&&&&&&&&&&&&2&&&&2&&2&&1\\
&&&&&&&&&&&&&&&&&&&&&&&&1\\
&&&&&&&&&&&&&&&&&&&&&&&2&&1\\
&&&&&&&&&&&&&&&&&&&&&&&&&&1\\
&&&&&&&&&&&&&&&&&&&&&&&2&&2&&1\\
&&&&&&&&&&&&&&&&&&&&&&&&&&&&1\\
&&&&&&&&&&&&&&&&&&&&&&&&&&&2&&1\\
&&&&&&&&&&&&&&&&&&&&&&&&&&&&&&1\\
&&&&&&&&&&&&&&&2&&&&&&&&2&&&&2&&2&&1\\
&&&&&&&&&&&&&&&&&&&&&&&&&&&&&&&&\ddots
\end{array} \]
}

\subsection{The structure of $\C_n$ for small $n$}
We now describe the structure of $\C_n$ with $n$ small.

\subsubsection{The categories $\C_0$ and $\C_1$}
We have $\C_0=\mathrm{Vec}_k$, with just one simple object $\one$, and no self
extensions.\bigskip

Next, $\C_1$ again has just one simple object $\one$, but
its projective cover is an extension
$P_\one=\vcenter{\xymatrix@=1mm{\one\lk[d]\\\one}}$.
The cohomology ring is
\[ \Ext^*_{\C_1}(\one,\one) = k[x] \]
where $|x|=1$. 

This is the only case where there is a forgetful tensor
functor to vector spaces; but it is not a symmetric
tensor functor. An object in $\C_1$ can be thought of as a vector
space $V$ together with a linear map $d\colon V \to V$ satisfying $d^2=0$. 
The tensor product is as usual, with $d \colon V\otimes W \to V
\otimes W$ given by $d(v\otimes w)=dv\otimes w + v\otimes dw$. The
commutativity isomorphism is given by $s(v \otimes w) = w \otimes v + dw
\otimes dv$. Thus for example a commutative algebra in $\C_1$ is a
differential algebra satisfying $ab-ba=da\cdot db$.\bigskip

\subsubsection{The category $\C_2$}
In the next case, $\C_2$ has two simple objects, $\one=X_\varnothing$ and $V=X_1$. The
projective cover of $\one$ is as in $\C_1$, and $V$ is a projective
simple object. We have $V\otimes V=P_\one$.  The cohomology ring is
the same as for $\C_1$. The Frobenius map acts on simple objects via 
$\one^{(1)}=\one$ and $V^{(1)}=0$. The Frobenius--Perron dimensions are given by
$\FPdim(\one)=1$ and $\FPdim(V)=\sqrt{2}$.\bigskip

\subsubsection{The category $\C_3$}
The category $\C_3$ also has two simple objects, $\one$ and $V$,
with the same tensor products (except that $P_\one$ is interpreted in
$\C_2$), the same Frobenius--Perron dimensions,
and the same Frobenius map as in $\C_2$, but
the projective covers are more complicated. They are given by
\[ P_\one=\vcenter{\xymatrix@=3mm{&\one\lkdl\lkdr \\ V\lk[d] & & \one\lk[d] \\ 
\one\lkdr & & V\lkdl \\& \one}}\qquad P_V=
\vcenter{\xymatrix@=3mm{V \lk[d] \\ \one \lk[d] \\ \one \lk[d] \\
    V}}\quad . \]
The cohomology ring is
\[ \Ext^*_{\C_3}(\one,\one)=k[x,y,z]/(y^2+xz) \]
where $|x|=1$, $|y|=2$, $|z|=3$. The Poincar\'e series of this ring is
given by
\[ \sum_i  t^i\dim\Ext^i_{\C_3}(\one,\one)=\frac{1+t^2}{(1-t)(1-t^3)}. \]

One can also compute the Ext ring between $V$ and itself. 
To this end, note that the minimal
resolution of $V$ is
$$
\cdots \to P_V \to P_\one \to P_V \to P_V \to P_\one \to P_V \to V \to 0
$$
and is periodic with period $3$. Using this, one shows that 
\[ \Ext^*_{\C_3}(V,V)=k[u,v]/(u^2) \]
where $|u|=2$, $|v|=3$. The Poincar\'e series of this ring is
\[ \sum_i t^i \dim \Ext^i_{\C_3}(V,V) = \frac{1+t^2}{1-t^3}. \]

Finally, we can compute ${\rm Ext}_{\C_3}^*(V,\one)$ and ${\rm Ext}_{\C_3}^*(\one,V)$ 
as bimodules over ${\rm Ext}_{\C_3}^*(\one,\one)$ and ${\rm Ext}_{\C_3}^*(V,V)$. Namely, using the above resolution, we see that 
${\rm Ext}_{\C_3}^*(V,\one)$ has a generator in degree one,
and is annihilated by $x$ and $y$ in ${\rm Ext}_{\C_3}^*(\one,\one)$ and by $u$ in ${\rm Ext}_{\C_3}^*(V,V)$. The elements
$z$ and $v$ both act as the periodicity generator in degree $3$. For ${\rm Ext}_{\C_3}^*(\one,V)$ we 
have exactly the same structure. In both cases, the Poincar\'e series is $t/(1-t^3)$.

\subsubsection{The category $\C_4$} 
The category $\C_4$ has four simple objects, $V_0=\one=X_\varnothing$, $V_1=X_1$,
$V_2=X_2$ and $V_3=X_{12}$. The projective covers of $V_0$ and $V_1$ are the same
as the projective covers of $\one$ and $V$ in $\C_3$, while the
projective covers of $V_2$ and $V_3$ are the same as the projective
covers of $\one$ and $V$ in $\C_2$. Abbreviating $V_m$ to $m$, the
structures are as follows:
\[ P_0=\vcenter{\xymatrix@=3mm{&0\lkdl\lkdr \\ 1\lk[d] & & 0\lk[d] \\ 
0\lkdr & & 1\lkdl \\& 0}},\qquad
P_1=[1,0,0,1],\qquad P_2=[2,2],\qquad P_3=[3]. \]
Tensor products of simples are given by the following table:
\[ \renewcommand{\arraystretch}{1.4}
\begin{array}{|c|c|c|c|}
\hline
V_0 & V_1 & V_2 & V_3 \\ \hline
V_1 & [0,0] & [3] & P_2 \\ \hline
V_2 & [3] & [0,1,0] & P_1 \\ \hline
V_3 & P_2 & P_1 & P_0 \\ \hline
\end{array} \]
The Frobenius map is given by
\[ V_0^{(1)}=V_0\qquad V_{1}^{(1)}=V_2,\qquad V_2^{(1)}=0,\qquad V_{3}^{(1)} = 0, \] 
and the Frobenius--Perron dimensions of the simples are given by
\[ \FPdim V_0 = 1, \quad \FPdim V_{1} = \sqrt 2, \quad \FPdim V_{2} = \sqrt{2+\sqrt 2},
\quad \FPdim V_{3} = \sqrt{4+2\sqrt 2}. \]
The cohomology ring is the same as for $\C_3$.\bigskip

\subsubsection{The category $\C_5$} 
The category $\C_5$ also has four simple objects, $V_0$, $V_1$,
$V_2$ and $V_3$, with the same tensor products (but with the
projectives interpreted in $\C_4$), the same
Frobenius--Perron dimensions and the same Frobenius map, but the
projective covers are as follows.\par
{\tiny
\begin{gather*} 
P_0=\vcenter{\xymatrix@=3mm{&&0\lkdlr\lk[d] \\
&1\lkdlr &0\lkdlr &2\lkdr\lk[d] \\
0\lkdr\lk[d] &1\lkdlr &3\lkdr &2\lkdr &0\lkdl\lk[d] \\
0\lkdr &2\lkdr\lk[d] &3\lkdr &1\lkdl &0\lkdl \\
&2\lkdr &0\lk[d] &1\lkdl \\
&&0}}, \quad
P_1=\vcenter{\xymatrix@=3mm{&&&1\lkdlr\\ 
&&0\lkdlr & &3\lkdr \\ 
&0\lkdlr &&2\lkdlr&&1\lkdl\\ 
1\lkdr &&2\lkdr &&0\lkdl\\
&3\lkdr&&0\lkdl\\ &&1}},\quad
P_2=\hspace{-7mm}
\vcenter{\xymatrix@=3mm{&2\lkdlr \\ 
2\lkdr &&0\lkdlr \\ 
&0\lkdr &&1\lkdr \\
&&1\lkdr &&0\lkdlr \\
&&&0\lkdr && 2\lkdl \\ 
&&&&2}}\hspace{-5mm},\quad
P_3=[3,1,0,0,1,3].
\end{gather*}
}

The Poincar\'e series for the cohomology ring appears to be as follows.\par
\begin{align*}
\sum t^i\dim\Ext^i(\one,\one) &= \frac{1+t^2+t^3+2t^4+t^5+t^6+t^8}{(1-t)(1-t^3)(1-t^7)} \\
&= 1+t+2t^2+4t^3+6t^4+8t^5+11t^6+14t^7+\cdots
\end{align*}
(we have computed it up to degree $40$). 

The Cartan matrix, the dimension of $\Ext^1_{\C_5}$, and of
$\Ext^2_{\C_5}$ between simples is as follows.\par
{\tiny
\[ \renewcommand{\arraystretch}{1.4}
\begin{array}{|c|cccc|}
\hline
&V_0&V_1&V_2&V_3 \\ \hline
V_0&8&4&4&2 \\
V_1&4&4&2&2 \\
V_2&4&2&4&0 \\
V_3&2&2&0&2 \\ 
\cline{1-5}
\multicolumn{5}{c}{\text{Cartan}}
\end{array} \qquad
\begin{array}{|c|cccc|}
\hline
&V_0&V_1&V_2&V_3 \\ \hline
V_0&1&1&1&0 \\
V_1&1&0&0&1 \\
V_2&1&0&1&0 \\
V_{3}&0&1&0&0 \\ \cline{1-5}
\multicolumn{5}{c}{\Ext^1_{\C_5}}
\end{array}\qquad
\begin{array}{|c|cccc|}
\hline
&V_0&V_1&V_2&V_3 \\ \hline
V_0&2&1&1&0 \\
V_1&1&1&0&0 \\
V_2&1&0&2&1 \\
V_3&0&0&1&1 \\ \cline{1-5}
\multicolumn{5}{c}{\Ext^2_{\C_5}}
\end{array}
 \]
}

\subsubsection{The category $\C_6$} 

The category $\C_6$ has eight simple objects, $V_0$ to $V_7$.
The projective covers of $V_0$ to $V_3$ are as in
$\C_5$, while the projective covers of $V_4$ to $V_7$ are as in
$\C_4$ but with all subscripts increased by four. Tensor products
are given by the following table.\par
{\tiny
\renewcommand{\arraystretch}{1.5}
\[ \begin{array}{|c|c|c|c|c|c|c|c|}
\hline
V_0&V_1&V_2&V_3&V_4&V_5&V_6&V_7\\ \hline
V_1&[0,0]\\ \cline{1-3}
V_2&[3]&[0,1,0]\\ \cline{1-4}
V_3&[2,2]&[1,0,0,1]
 & W
\\ \cline{1-5}
V_4&[5]&[6]&[7]&[0,2,0]\\ \cline{1-6}
V_5&[4,4] &[7]&P_6&[1,3,1] & W'
\\ \cline{1-7}
V_6&[7]&[4,5,4]
 &P_5&[2,0,1,0,2]
 & P_3
 &W''
\\ \hline
V_7&P_6&P_5&P_4&P_3&P_2&P_1&P_0 \\ \hline
\end{array} \]}%
where 
{\tiny
\[ W=\vcenter{\xymatrix@=1.2mm{&&0\lkdlr \\ &1\lkdl & &0\lkdl \\ 
 0\lkdr & &1\lkdl \\& 0}},\quad
W'=\vcenter{\xymatrix@=1.2mm{&0\lkdlr \\ 0\lkdr&&2\lkdlr\\
  &2\lkdr&&0\lkdl\\ &&0}},\qquad
W''=\vcenter{\xymatrix@=1.2mm{&&0\lkdlr\\&1\lkdlr&&2\lkdr\\0\lkdr&&3\lkdr&&0\lkdl\\
  &2\lkdr&&1\lkdl\\&&0}}. \]
}

The Frobenius--Perron dimensions and the Frobenius twists of the simple
modules are as follows.\par
{\tiny \newcommand{\pp}{\!+\!}
\[ \renewcommand{\arraystretch}{2}
\setlength{\arraycolsep}{0.7mm}
\begin{array}{|c||c|c|c|c|c|c|c|c|}
\hline
V &V_0&V_1&V_2&V_3&V_4&V_5&V_6&V_7 \\ \hline
\FPdim V & 1&\sqrt 2 & \sqrt{2\pp\sqrt{2}} & \sqrt{4\pp 2\sqrt{2}} &
 \sqrt{2\pp\sqrt{2+\sqrt{2}}}& \sqrt{4\pp 2\sqrt{2\pp\sqrt{2}}}&
 \sqrt{2\pp\sqrt{2}}.\sqrt{2\pp\sqrt{2\pp\sqrt{2}}}&  
 \sqrt{4\pp 2\sqrt{2}}.\sqrt{2\pp\sqrt{2\pp\sqrt{2}}}
\\ \hline
V^{(1)} & V_0 & V_2 & V_4 & V_6 & 0 & 0 & 0 & 0 \\
\hline \end{array} \]
}

\subsubsection{The category $\C_7$}
The Poincar\'e series for the cohomology ring appears to be as follows.\par
\begin{align*}
\sum t^i\dim\Ext^i(\one,\one) &= {\scriptstyle \frac{1 + t^2 + t^3 + 3 t^4 + 4 t^5 + 4 t^6 + 3 t^7 + 5 t^8 + 4 t^9 + 4 t^{10} + 4 t^{11} 
+ 4 t^{12} + 4 t^{13} + 5 t^{14} + 3 t^{15} + 4 t^{16} + 4 t^{17} + 3 t^{18} + t^{19} 
+ t^{20} + t^{22}}{(1-t)(1-t^3)(1-t^7)(1-t^{15})} }
\end{align*}
(we computed it up to degree 26). 

\begin{remark} The cohomology computations were done as follows. 
First we computed the tilting modules $T_m$ for $m$ large enough.
Then we computed the basic algebra of $\C_{2n}$ as $\End(\coplus_{i=2^n-1}^{2^{n+1}-2} T_i)$. 
Then we used a standard MAGMA function to compute the dimensions of 
${\rm Ext}^j$ between the simple modules over this basic algebra, for $j$ up to a specified point.
\end{remark}

\begin{remark} In general, on the basis of these examples, we expect the following properties of the cohomology. Let $R_n$ be the graded ring $\Ext^\bullet(\one,\one)$ in $\C_{2n-1}$ 
and $\C_{2n}$, and let $h_n(t)$ be its Hilbert series. Then we expect that:

1)  $R_n$ has a natural polynomial subalgebra $\k[x_1,\dots,x_n]$, where $|x_i|=2^i-1$, over which it is a free module of rank $2^{n(n-1)/2}$. In particular, $R_n$ is Cohen-Macaulay;

2) one has 
$$
h_n(t)=\frac{P_n(t)}{\prod_{i=1}^n(1-t^{2^i-1})},
$$
where $P_n(t)$ is a polynomial with nonnegative coefficients of degree $2^{n+1}-2(n+1)$; 

3) the polynomial $P_n$ is palindromic;

4) $R_n$ is Gorenstein;  

5) the homomorphism $R_n\to R_{n+1}$ induced by the inclusion $\C_{2n}\to \C_{2n+2}$ is an injection; 

6) the direct limit $\lim_{n\to\infty}R_n$ has finite dimensional homogeneous subspaces; 
hence there exists a coefficientwise limit $h_\infty(t):=\lim_{n\to \infty}h_n(t)$, which is the Hilbert series 
of $\Ext^\bullet(\one,\one)$ in $\C_\infty$. 

It is an interesting question whether $R_n$ is an integral domain. 
If so, then by Stanley's criterion (\cite{St}, Theorem 4.4), (1) and (3) imply (4). 
\end{remark}


\begin{thebibliography}{9999999}

\bibitem{Alperin:1980b}
J.~L. Alperin, {{Diagrams for modules}}, J.~Pure \& Applied Algebra
  \textbf{16} (1980), 111--119.
\bibitem{Benson/Carlson:1987a}
D.~J. Benson and J.~F. Carlson, {{Diagrammatic methods for modular
  representations and cohomology}}, Commun.\ in Algebra \textbf{15} (1987),
  53--121.
 \bibitem{BEO} D. Benson, P. Etingof, and V. Ostrik, New incompressible symmetric tensor categories in positive characteristic, arXiv:2003.10499.
 \bibitem{BCP} W. Bosma, J. Cannon and C. Playoust,
The Magma algebra system, I. The user language,
J.\ Symbolic Comput., v. 24, p. 235--265, 1997.
\bibitem{Cartier:1956a}
P.~Cartier, \emph{{Dualit\'e de Tannaka des groupes et alg\`ebres de Lie}},
  Comptes Rendus Acad.\ Sci.\ Paris, S\'erie I \textbf{242} (1956), 322--325.  
\bibitem{CEH} K. Coulembier, I. Entova-Aizenbud, T. Heidersdorf, Monoidal abelian envelopes and a conjecture of Benson-Etingof, arXiv:1911.04303.
\bibitem{DN1} A. Davydov and D. Nikshych, 
The Picard crossed module of a braided tensor category, Algebra and Number Theory, 
    Volume 7, Number 6 (2013), 1365---1403.
\bibitem{DN2} A. Davydov and D. Nikshych, Braided extensions of braided fusion categories, (in preparation). 
\bibitem{DR} A. Davydov and I. Runkel, $\ZZ/2\ZZ$-extensions of Hopf algebra module categories by their base categories, Advances in Mathematics, Volume 247, pages 192---265.
\bibitem{De1} P. Deligne, Cat\'egories tannakiennes, The Grothendieck Festschrift, vol. II, pp 111-195, Birkh\"auser, 1990. 
\bibitem{De2} P. Deligne, Cat\'egories tensorielles.
Moscow Math. J. 2(2002), no. 2, 227--248.
\bibitem{De3}
P. Deligne, 
La Cat\'egorie des Repr\'esentations du Groupe Sym\'etrique 
$S_t$, lorsque $t$ n'est pas un Entier Naturel,
http://www.math.ias.edu/\~\ phares/deligne/preprints.html
\bibitem{Deligne/Milne:1982a}
P.~Deligne and J.~S. Milne, \emph{{Tannakian Categories}}, Hodge cycles,
  motives, and Shimura varieties (P.~Deligne, J.~S. Milne, A.~Ogus, and {K.-y.}
  Shih, eds.), Lecture Notes in Mathematics, vol. 900, Springer-Verlag,
  Ber\-lin/New York, 1982, pp.~101--228.
\bibitem{Doty/Henke:2005a}
S.~R. Doty and A.~Henke, {{Decomposition of tensor products of modular
  irreducibles for $\mathsf{SL}_2$}}, Quarterly J.\ Math (Oxford) \textbf{56}
  (2005), no.~2, 189--207.
\bibitem{EM} S. Eilenberg and S. Mac Lane, On the groups $H(\Pi,n)$ II, Methods of Computation, Annals of Mathematics, v.70(1), 1954, p.49---139. 
\bibitem{EHS} I. Entova-Aizenbud, V. Hinich, and V. Serganova, Deligne categories and the limit of categories ${\rm Rep}(GL(m|n))$, to appear in IMRN,
arXiv:1511.07699.
\bibitem{EGNO} P. Etingof, S. Gelaki, D. Nikshych, and V. Ostrik, 
Tensor categories, AMS, 2015. 
\bibitem{ENO} P. Etingof, D. Nikshych, and V. Ostrik, 
Fusion Categories and Homotopy Theory, Quantum Topology, 1.3, 2010, pp. 209--273.
\bibitem{Fe} W. Feit, The representation theory of finite groups. 
\bibitem{J} J. C. Jantzen, Representations of algebraic groups,
  AMS, 2003. 
\bibitem{Joyal/Street:1991a}
A.~Joyal and R.~Street, \emph{{An introduction to Tannaka duality and quantum
  groups}}, Category theory (A.~Carboni, M.~C. Pedicchio, and G.~Rosolini,
  eds.), Lecture Notes in Mathematics, vol. 1488, Springer-Verlag, Ber\-lin/New
  York, 1991, pp.~413--492.
\bibitem{Krein:1949a}
M.~Krein, \emph{{A principle of duality for a bicompact group and square block
  algebra}}, Dokl.\ Akad.\ Nauk.\ SSSR \textbf{69} (1949), 725--728.
\bibitem{Morris:1977a}
S.~A. Morris, \emph{{Pontryagin duality and the structure of locally compact
  abelian groups}}, London Math.\ Soc.\ Lecture Note Series, vol.~29, Cambridge
  University Press, 1977.
\bibitem{O1} V. Ostrik, Module Categories Over Representations of $SL_q(2)$ in the Non-Semisimple Case,
Geometric and Functional Analysis, Volume 17, Issue 6, pp. 2005--2017. 
\bibitem{O2} V. Ostrik, On   symmetric   fusion   categories   in   positive   characteristic,
arXiv:1503.01492.
\bibitem{O3} V. Ostrik, private communication, 2017. 
\bibitem{SaavedraRivano:1972a}
N.~Saavedra Rivano, \emph{{Cat\'egories Tannakiennes}}, Lecture Notes in
  Mathematics, vol. 265, Springer-Verlag, Ber\-lin/New York, 1972.
\bibitem{St} R. P. Stanley, Hilbert functions of graded algebras, Advances in Mathematics, v.28, p.57--83, 1978. 
\bibitem{Tannaka:1938a}
T.~Tannaka, \emph{{\"Uber den Dualit\"atssatz der nichtkommutativen
  topologischen Gruppen}}, Tohoku Math.\ J. \textbf{45} (1938), 1--12.
\bibitem{Ven} S. Venkatesh, Hilbert Basis Theorem and Finite Generation of Invariants in Symmetric Fusion Categories in Positive Characteristic, International Mathematics Research Notices 2016(16),
DOI10.1093/imrn/rnv305. 
\bibitem{W} L. Washington, Introduction to Cyclotomic Fields, Graduate texts in Mathematics, Second edition, Springer, 1996.
\end{thebibliography}
\end{document}